\documentclass{article}
\usepackage{amsmath,amsthm,amsfonts,amssymb,enumerate}
\usepackage{graphicx} 
\usepackage{color}
\theoremstyle{plain}
\newtheorem{theorem}{Theorem}
\newtheorem{lemma}[theorem]{Lemma}
\newtheorem{proposition}[theorem]{Proposition}
\newtheorem{cor}[theorem]{Corollary}
\theoremstyle{definition}
\newtheorem{rem}[theorem]{Remark}

\title{Walk/Zeta correspondence for quantum walks on path graphs: a pre-geometric structure implied by 
a generalized reflection principle}
\author{Jir\^o Akahori, Riki Kitano, Norio Konno, and Iwao Sato}
\date{October 2026}

\begin{document}

\maketitle

\begin{abstract}
We study the zeta functions of the reflected random walk and of the
reflected Szegedy walk on the finite path graph $P_N$, and show that both
admit an exact decomposition into local factors indexed by the infinite
dihedral group $\mathcal{G}_{0,N+1}$ generated by the two boundary
reflections.

The decomposition rests on two ingredients. The first is a generalized
reflection principle: for an arbitrary asymmetry $p\neq q$ the reflecting
boundary is implemented by an explicit involution of $\mathbb{R}^{\mathbb{Z}}$
which is \emph{not} a point reflection, and the transition probabilities of
the reflected walk are recovered by summing the free-walk kernel over
$\mathcal{G}_{0,N+1}$. 
This yields a discrete trace formula, that is, a
product expansion of $\det(I-u\mathbf{P}^{(N)})^{-1}$ into a boundary factor
and one local zeta factor for each group element. 
The second is a local
Konno--Sato correspondence: after a symmetrizing similarity transformation
the classical and the quantum local factors are related term by term, which
promotes the discrete trace formula to a product expansion of
$\det(I-u\mathbf{U}^{(N)})^{-1}$.

We then show that this discrete decomposition already carries the
information that survives the continuum limits. First, in the thermodynamic
limit the normalized zeta functions converge to a Mahler measure, and this
Mahler measure is exactly the local factor of the identity element: in the
limit, no other group element and no boundary term contributes. 
Second,
under the diffusive scaling $n\asymp N^{2}$ the classical trace formula
reproduces the Poisson summation formula, whereas the quantum one reproduces
the Dirac comb identity under the ballistic scaling $n\asymp N$; the two
identities are therefore not two forms of one statement but live on
different time scales, and in the quantum case the discrete identity is
already exact at finite $N$. 

Third, under a double scaling limit in which
the asymmetry is tuned as $p-q=M/N$, we obtain a continuum trace formula of massive Dirac type, and the discrete local decomposition
is inherited factor by factor: the even words converge to the
periodic-orbit Euler product, the identity to the free-field factor, 
and the odd words together with the boundary to a finite scattering
phase. The three limits then multiply to a classical object: the limiting
determinant is the zeta-regularized determinant of $-\mathrm{d}^{2}/\mathrm{d}x^{2}+\varkappa^{2}$
on $[0,1]$ with Dirichlet ends, and the three classes of words are precisely
its periodic-orbit factor, its vacuum factor and its Gelfand--Yaglom
normalization.

Taken together, these results indicate that the discrete trace formula is
not merely a combinatorial identity but a pre-geometric structure: the
features that become the length spectrum, the heat/wave dichotomy and the
mass term in the continuum are already present, exactly, at finite $N$.
\end{abstract}

\section{Introduction}

The intricate relationship between spectral geometry, probability theory,
and quantum mechanics has been profoundly illuminated by the study of zeta
functions on graphs. Originally introduced for regular graphs as an analogue
of the Selberg zeta function \cite{Selberg1956}, the Ihara zeta function
\cite{Ihara1966} bridges the topological length spectrum of closed paths
with the algebraic spectrum of adjacency operators. This framework was
extended to quantum information theory by the Konno--Sato theorem
\cite{KS2012QIP, KKS2021QIP}, which established a determinant-based exact
correspondence---the Konno--Sato correspondence---between the zeta functions
of classical random walks and those of their quantum counterparts, such as
the Szegedy walk \cite{Sze}. The development of the ``Walk/Zeta
correspondence'' has since been a central theme in a series of works by
Konno, Sato and their collaborators \cite{KKS2021QIP, KKS2023JSP,
KonnoTamura2021}, mostly for Grover walks or for walks on regular lattices,
that is, on graphs without boundary.

The present paper is a new branch of that program: we work on a graph
\emph{with} boundary, the finite path $P_N$ with two reflecting endpoints,
and we allow an arbitrary spatial asymmetry $p\neq q$. Two features are
specific to this setting. The first is that the global zeta function is no
longer a single spectral object but splits, in an exact and canonical way,
into contributions indexed by a reflection group; the second is that the
asymmetry, which has no counterpart on a vertex-transitive graph, acts in
the continuum limit as a mass. The purpose of the paper is to make both
statements precise, and to argue that the resulting discrete identity
already contains the structure of the continuum trace formulas it degenerates
to.

\medskip
\noindent\textbf{The discrete trace formula.}
Our principal object is an exact factorization of the zeta function of the
reflected walk on $P_N$. Its algebraic source is a generalized reflection
principle (Proposition \ref{TraceFormula-P}). For $p\neq q$ the reflecting
boundary at $0$ cannot be implemented by the point reflection
$x\mapsto -x$; instead we construct in Section \ref{sec:4} an explicit
infinite matrix $\mathsf{M}$ whose associated operator $\mathcal{M}_0$ is an
involution of $\mathbb{R}^{\mathbb{Z}}$ and whose invariant subspace is
exactly the set of functions compatible with the reflecting boundary
condition. Its anti-diagonal is a weighted point reflection and it carries,
in addition, a tail supported beyond the mirror image; the two boundary
involutions generate a group $\mathcal{G}_{0,N+1}$ isomorphic to the
infinite dihedral group. Summing the free-walk kernel over this group
reproduces the reflected transition probabilities, and hence, after taking
traces, yields the discrete trace formula of Theorem \ref{DiscreteTrace}:
\begin{align*}
    \det(I_{N+2}-u\mathbf{P}^{(N)})^{-1}
    =\zeta_{\partial X^{(N)}}(u)\prod_{W\in\mathcal{G}_{0,N+1}}\zeta_{W,X^{(N)}}(u),
\end{align*}
a boundary factor times one local zeta factor per group element. We also
prove the absolute convergence of the expansion (Lemma
\ref{prop:absolute-summability}), which is what makes the factorization an
identity of analytic functions rather than a formal rearrangement.

\medskip
\noindent\textbf{From the classical to the quantum walk.}
The second ingredient is a \emph{local} Konno--Sato correspondence (Lemma
\ref{local-konno-sato}). A diagonal similarity transformation symmetrizes
both the transition matrix and the reflection operators, and in the
symmetrized picture the generating function of the free Szegedy walk and
that of the free random walk differ only on the diagonal. Consequently the
classical and quantum local factors attached to one and the same group
element are related by the elementary factor $(1+u^{2})^{-c(\tilde W)}$,
with $c(\tilde W)$ non-zero only for the identity. Combining this with the
global Konno--Sato correspondence upgrades the discrete trace formula to its
quantum counterpart (Theorem \ref{QWDiscreteTrace}), and shows that the
arithmetic relation between the classical and the quantum zeta functions
holds bulk-term by bulk-term, the whole discrepancy being absorbed into the
boundary factor.

\medskip
\noindent\textbf{What the decomposition sees in the continuum limits.}
The rest of the paper tests the discrete decomposition against three
different limits, and in each of them the outcome is governed by an
identifiable part of the group.

\emph{(i) Thermodynamic limit: only the identity survives.} In Section
\ref{sec:3} the normalized zeta functions of $P_N$ are shown to converge, as
$N\to\infty$, to integrals of Mahler-measure type, namely to $\zeta_X(u)$
and $\zeta_Y(u)$, the local return zeta functions of the free walks on
$\mathbb{Z}$ (Theorems \ref{M-Th3} and \ref{Th3}). The trace formula then
identifies this limit: the local factor of the identity element is
$\zeta_{id,X^{(N)}}(u)=\zeta_X(u)^{N}$, respectively
$\zeta_{id,Y^{(N)}}(u)=\zeta_Y(u)^{N}$, so the Mahler measure
\emph{is} the identity factor, and the whole of
$\mathcal{G}_{0,N+1}\setminus\{id\}$ together with the boundary factor
contributes nothing after normalization. The infinite-graph limit is thus
not a global spectral statement but a purely local one.

\emph{(ii) Scaling limits: two summation formulas on two time scales.}
Section \ref{sec:5} evaluates the discrete trace formula for $p=q$ in the
diffusive regime $n=t(N+1)^{2}$ and recovers the classical Poisson summation
formula (Corollary \ref{cor:poisson}); here the translations of
$\mathcal{G}_{0,N+1}$ produce the Gaussian side and the reflections produce
the constant term. Section \ref{sec:7} performs the same computation for the
Szegedy walk, but the relevant regime is now ballistic, $n=v(N+1)$, and the
output is the Dirac comb identity (Theorem \ref{thm:dirac-comb}). The
contrast is the point: the same discrete identity degenerates to the heat
trace on $S^{1}$ on the time scale $n\asymp N^{2}$ and to the wave trace on
$S^{1}$ on the time scale $n\asymp N$, so the classical and quantum Poisson
formulas are not two readings of one limit. Moreover the quantum
computation produces an identity which is already exact for every $N\ge2$,
the boundary anomaly cancelling the reflection terms exactly, whereas the
classical one requires a genuine passage to the limit.

\emph{(iii) Double scaling limit: the decomposition is inherited.} In
Section \ref{sec:8} we let the asymmetry vanish at the same rate as the
lattice spacing, $p-q=M/N$ and $u=e^{-s/N}$. The spatial asymmetry then
survives as an effective mass: 
the limiting determinant exhibits a massive Dirac-type dispersion relation, with $\varkappa=\sqrt{s^2+M^2}$ in place of $s$.
The essential point is not
the limit itself but that the discrete decomposition passes to it factor by
factor (Theorem \ref{thm:double_scaling}): the even words converge, term by
term, to the local factors of the iterates of the unique primitive periodic
orbit, and hence their product to the continuum Euler product; the identity
converges to the free-field factor $e^{s-\varkappa}$; the odd words and the
boundary factor separately diverge and vanish, but their product converges
to the finite scattering phase $\varkappa/s$. Two exact algebraic facts make
this possible and are of independent interest: the tails of the boundary
involutions cancel identically along even words (Lemma
\ref{lem:translations}), so that translations act as genuine translations on
the bulk for every $p,q$ and every $N$; along odd words they do not cancel,
and their surviving contribution is exactly the factor $\varkappa^{2}/s^{2}$
which is later absorbed by the boundary (Lemma \ref{lem:reflections}).
Finally, the three limits are not independent: their product is the
zeta-regularized determinant of $\mathcal{D}^{2}+s^{2}$ on $[0,1]$ with
Dirichlet ends, where $\mathcal{D}$ is the massive Dirac operator of the
limit (Theorem \ref{thm:identification}). Each class of words is thereby
identified with one of the three canonical constituents of a
Gelfand--Yaglom determinant, and in particular the boundary factor
$\varkappa/s$ is its Wronskian normalization, equivalently the Jacobian
$\mathrm{d}s/\mathrm{d}\varkappa$ of the massive dispersion relation.

\medskip
\noindent\textbf{A pre-geometric reading.}
The Selberg trace formula \cite{Selberg1956} and the wave trace identities
\cite{Duistermaat1975, Chavel1984} are normally obtained in the continuum,
from differential operators on Riemannian manifolds, and the length spectrum
appears there as a property of geodesics. The three limits above suggest the
opposite order of logic. The length spectrum is already visible at finite
$N$ as the word length in $\mathcal{G}_{0,N+1}$; the heat/wave dichotomy is
already visible as a choice of time scale in one and the same discrete
identity; the mass is already visible as the asymmetry $p-q$. In this sense
the discrete trace formula is a pre-geometric skeleton: the continuum trace
formulas are degenerations of it, and the decomposition into local zeta
factors is the structure that is transported along the degeneration.

\medskip
The paper is organized as follows. Section \ref{sec:2} fixes notation and
defines the walks and their zeta functions. Section \ref{sec:3} establishes
the thermodynamic limits and their Mahler measure representations. Section
\ref{sec:4} constructs the generalized reflections and proves that they are
involutions. Section \ref{sec:5} proves the generalized reflection principle
and the discrete trace formula for the reflected random walk, and derives
the Poisson summation formula from it. Section \ref{sec:6} establishes the
local Konno--Sato correspondence and the trace formula for the Szegedy walk.
Section \ref{sec:7} derives the quantum (Dirac comb) counterpart of the
Poisson summation formula. Section \ref{sec:8} treats the double scaling
limit and the massive Dirac-type trace formula. Section \ref{sec:9} collects
concluding remarks.

\section{Zeta Functions of Reflected and Free Walks}\label{sec:2}

\subsection{Notations}
\begin{itemize}
    \item For the $ N \times N $ unit matrix we write 
$ I_N $. 
    \item We denote the set of eigenvalues of a given matrix $ A $ by $\mathrm{Spec} (A)$. 
\end{itemize}

\subsection{Reflected Random and Quantum Walks}\label{sec:2.2}
Let $ N \geq 2 $ and 
$ P_N := \{ 0,1,\cdots, N-1, N, N+1\} $ 
and $ p, q \in (0,1) $ with $ p+q =1 $, and 
consider 
\begin{enumerate}
    \item (Reflected Random Walk)
    Let $ X^{(N)} $ be a Markov chain on $ P_N $
    (defined on a probability space $ (\Omega, \mathcal{F}, \mathbb{P}) $)
    such that 
    \begin{align*}
        \begin{aligned}
            \mathbb{P} (X^{(N)}_{t+1} =k | X^{(N)}_t)
            = \begin{cases}
                p & X^{(N)}_t = k+1, k=0, 1, \cdots, N-1 \\
                q & X^{(N)}_t = k-1, k=2, \cdots, N+1 \\
                1 & X^{(N)}_t= N +1, k=N, \, \& \, X^{(N)}_t= 0, k=1 \\
                0 & \text{otherwise}.
            \end{cases}
        \end{aligned}
    \end{align*}
    We denote the transpose of the transition matrix by $ \mathbf{P}^{(N)}$, i.e. 
the $(i,j)$ entry of $ \mathbf{P}^{(N)} $ is 
$ \mathbb{P} (X^{(N)}_{t+1} =i | X^{(N)}_t=j )$ irrespective of $ t $, 
to be consistent with 
the quantum walk convention.   
    \item (Reflected Quantum Walk a.k.a. Szegedy Walk \cite{Sze})
    Write $ |i\rangle $
as $ (0,\cdots,\overset{i}{\check{1}}, \dots, 0)^\top $ for $ i \in \mathbb{Z} $
and $ |\mathrm{L}\rangle := (1,0)^\top $ and $ |\mathrm{R}\rangle := (0,1)^\top $.
Let 
    \begin{align*}
    \begin{aligned}
        & \mathcal{H}_N := \\
       & \mathrm{span}_{\mathbb{C}} \{       
        |i\rangle \otimes | \mathrm{L} \rangle, |i\rangle \otimes | \mathrm{R} \rangle : i =1, 2, \cdots, N \} \oplus \mathbb{C} |0\rangle \otimes |\mathrm{R}\rangle 
        \oplus \mathbb{C} |N+1\rangle \otimes |\mathrm{L}\rangle 
    \end{aligned}
    \end{align*}
which is a subspace of $ \mathbb{C}^{P_N} \otimes \mathbb{C}^2 $.
Let $ \mathrm{C}^{(N)} \in \mathrm{End} (\mathcal{H}_N) $ be defined by 
\begin{align*}
        \begin{aligned}
            \mathrm{C}^{(N)} &:= |0\rangle \langle 0| \otimes |\mathrm{R}\rangle \langle \mathrm{R}|
            + \sum_{j=1}^N |j\rangle \langle j| \otimes ( 2 |\phi \rangle \langle \phi | - I_2 ) \\
            & \hspace{2cm} + |N+1 \rangle \langle N+1| \otimes |\mathrm{L} \rangle \langle \mathrm{L} |,
        \end{aligned}
    \end{align*}
where 
    \begin{align*}
        |\phi \rangle = \sqrt{p} |\mathrm{L}\rangle + \sqrt{q} |\mathrm{R}\rangle,
    \end{align*}
    and let $ \mathrm{S}^{(N)}\in \mathrm{End} (\mathcal{H}_N) $
    be the shift operator 
    defined by
        \begin{align*}
        \begin{aligned}
            \mathrm{S}^{(N)}  | i\rangle \otimes |J \rangle &:= 
            \begin{cases}
                | i+1\rangle \otimes| \mathrm{L} \rangle & (J=\mathrm{R}) \\
                | i-1\rangle \otimes | \mathrm{R}\rangle & (J=\mathrm{L}). 
            \end{cases}
        \end{aligned}
    \end{align*}  
The linear operator 
    $ \mathbf{U}^{(N)} = S^{(N)} C^{(N)} $
is easily seen to be a unitary one 
and defines 
the quantum walk $ Y^{(N)} $ by 
    \begin{align*}
      \mathbb{P}_\psi (Y^{(N)}_n=k)
      = \left\Vert (|k\rangle \langle k| \otimes I_2 ) (\mathbf{U}^{(N)})^n |\psi\rangle \right\Vert^2, \quad  k \in P_N,  
    \end{align*}
for $ \psi \in \mathcal{H}_N$
with $ \Vert \psi \Vert = 1$, 
where we regard $ \{ Y^{(N)}_n \}_{n=0}^\infty $, starting at $ \psi $, as a family of random variables, without specifying joint distributions. 
\end{enumerate}

\subsection{Zeta Functions Associated with the Reflected Walks}
We associate a (normalized) zeta function with each walk by
\begin{align*}
\begin{aligned}
    \bar{\zeta}_{X^{(N)}} (u) &:= \left( \det (I_{N+2} - u \mathbf{P}^{(N)}) \right)^{-\frac{1}{N+1}} \\
    &=: e^{-\frac{1}{N+1}\mathrm{Log}\det (I_{N+2} - u \mathbf{P}^{(N)}) }, \quad u \in \mathbb{C}
    \end{aligned}
\end{align*}
and 
\begin{align*}
    \bar{\zeta}_{Y^{(N)}} (u) &:= \left( \det (I_{2N+2} - u \mathbf{U}^{(N)}) \right)^{-\frac{1}{N+1}} \\
    &=: e^{-\frac{1}{N+1}\mathrm{Log}\det (I_{2N+2} - u \mathbf{U}^{(N)}) }, \quad u \in \mathbb{C},
\end{align*}
respectively. Here,  
we take $ \mathrm{Log} z = \log |z| + i \theta $ with $ -\pi < \theta \leq \pi $ for $ \theta = \arg z $, and we understand them to be functions that take values in $ \bar{\mathbb{C}} = \mathbb{C} \cup \{ \infty\} $. 
Further, we also associate a zeta function 
to the free random walk $ X $ on $ \mathbb{Z} $ by
\begin{align}\label{FreeZeta}
    \begin{aligned}
    \zeta_X (u) = \exp \sum_{n=1}^\infty\frac{ P(X_n=0|X_0=0) }{n} u^n.
    \end{aligned}
\end{align}
Here $ X $ is defined as usual by 
     \begin{align*}
        \begin{aligned}
            \mathbb{P} (X_{t+1} =k | X_t)
            = \begin{cases}
                p & X_t = k+1 \\
                q & X_t = k-1 \\
                0 & \text{otherwise},
            \end{cases}
        \end{aligned}
    \end{align*}
and we denote the transpose of the transition matrix by $ \mathbf{P}^{(\infty)}$.  
This local return trace approach for infinite graphs aligns with the framework of $L^2$-zeta functions \cite{Luck1994} and is conceptually related to the regularized determinants and local $L$-factors discussed in number theory \cite{Deninger1992, Kurokawa2005}.

We also consider the free Szegedy walk $\mathbf{U}^{(\infty)}$ on the infinite line $\mathbb{Z}$.
The state space is defined as the Hilbert space $\mathcal{H}_{\infty} = \ell^2(\mathbb{Z}) \otimes \mathbb{C}^2$, with the orthonormal basis given by $\{ |x\rangle \otimes |J\rangle : x \in \mathbb{Z}, J \in \{\mathrm{L}, \mathrm{R}\} \}$.

The evolution operator $\mathbf{U}^{(\infty)} = \mathrm{S}_{\infty} \mathrm{C}_{\infty}$ is the product of the coin operator $\mathrm{C}_{\infty}$ and the shift operator $\mathrm{S}_{\infty}$, defined as:
\begin{align*}
    \mathrm{C}_{\infty} &:= \sum_{x \in \mathbb{Z}} |x\rangle \langle x| \otimes (2 |\phi \rangle \langle \phi | - I_2), \\
    \mathrm{S}_{\infty} (|x\rangle \otimes |J\rangle) &:=
    \begin{cases}
        |x+1\rangle \otimes |\mathrm{L}\rangle & (J=\mathrm{R}) \\
        |x-1\rangle \otimes |\mathrm{R}\rangle & (J=\mathrm{L}),
    \end{cases}
\end{align*}
where $|\phi \rangle = \sqrt{p} |L\rangle + \sqrt{q} |R\rangle$.
The free Szegedy walk $Y$ on $\mathbb{Z}$ is 
defined by:
\begin{align*}
      \mathbb{P}_\psi (Y_n=k)
      = \left\Vert (|k\rangle \langle k| \otimes I_2 ) (\mathbf{U}^{(\infty)})^n |\psi\rangle \right\Vert^2, \quad k \in \mathbb{Z} 
    \end{align*}
for $\psi \in \mathcal{H}_\infty$
with $ \Vert \psi \Vert = 1$.
Furthermore, analogous to the free random walk $X$, we associate a zeta function to $ Y $. 
Since the global trace diverges on the infinite graph, we define it via the local return trace to the origin:
\begin{align}\label{TraceDefinition}
    \zeta_Y (u) := \exp \left( \sum_{n=1}^\infty \frac{u^n}{n} \mathrm{Tr}_{\mathbb{C}^2} \langle 0 | (\mathbf{U}^{(\infty)})^n | 0 \rangle \right), \quad |u| < 1.
\end{align}
As will be shown in Theorem \ref{Th3} below, this  definition perfectly recovers the thermodynamic limit of the normalized zeta functions of the finite path graphs:
\begin{align*}
    \lim_{N \to \infty} \bar{\zeta}_{Y^{(N)}} (u) = \zeta_Y(u).
\end{align*}

\subsection{Fourier Transformation of the Free Walks}
We recall some basic facts in the Fourier transformation to apply it our analysis. 
The $ 2 $-dimensional Fourier transform $ \mathcal{F}: l_0 (\mathbb{Z})\otimes \mathbb{C}^2  \to L^2([0,2\pi)) \otimes  \mathbb{C}^2  $ is defined as usual by 
\begin{align*}
   \mathcal{F}(\Psi) (k) \equiv \widehat{\Psi}(k) = \sum_{x \in \mathbb{Z}} e^{-ikx}  \Psi (x), \quad k \in [0, 2\pi), \quad \Psi \in l_0 (\mathbb{Z}) \otimes \mathbb{C}^2, 
\end{align*}
and its inversion is given by
\begin{align*}
    \begin{aligned}
      \mathcal{F}^{-1} (\widehat{\Psi} )(y) =  \int_0^{2\pi} \widehat{\Psi} (k) e^{iky} \frac{dk}{2\pi} 
        = \sum_{x \in \mathbb{Z}} 
       \int_0^{2\pi} e^{-ik (x-y)}
       \Psi (x) \frac{dk}{2\pi} = \Psi (y), \quad y \in \mathbb{Z}.
    \end{aligned}
\end{align*}
Here $ l_0 (\mathbb{Z}) := \{ f : \mathbb{Z} \to \mathbb{C} | \text{$f(z) = 0$ for all but finite $ z $} \} $.  

We extend it to the operators on 
$ l_0 (\mathbb{Z}) \otimes \mathbb{C}^2 $
by the following: for a linear operator $A: l_0 (\mathbb{Z}) \otimes \mathbb{C}^2 \to l_0(\mathbb{Z}) \otimes \mathbb{C}^2 $, 
\begin{align*}
    \begin{aligned}
        \widehat{A} (k) 
        = \sum_{x \in \mathbb{Z}} e^{-ikx} \langle x | A | 0 \rangle, \quad k \in [0,2\pi).
    \end{aligned}
\end{align*}

\begin{lemma}\label{FourLemma}
Suppose that the linear operator $ A: l_0 (\mathbb{Z}) \otimes \mathbb{C}^2 \to l_0(\mathbb{Z}) \otimes \mathbb{C}^2$ is translation-invariant, that is, 
\begin{align}\label{Transinv0}
   A \mathrm{T}_a = \mathrm{T}_a A \quad \iff \quad \mathrm{T}_{-a} A \mathrm{T}_a = A \quad (\forall a \in \mathbb{Z})
\end{align}
where $ \mathrm{T}_a $ is the shift operator defined by 
$$\mathrm{T}_a (|x\rangle \otimes |\psi\rangle) = |x+a\rangle \otimes |\psi\rangle \quad (x, a \in \mathbb{Z}, \, |\psi\rangle \in \mathbb{C}^2). $$
Then, 
for $x, y \in \mathbb{Z}$, 
\begin{align}\label{InvFourier0}
    \langle x | A | y \rangle = \int_0^{2\pi} e^{ik(x-y)} \widehat{A}(k) \frac{dk}{2\pi} \in \mathrm{End}(\mathbb{C}^2).
\end{align}
Moreover, 
for $ \Psi \in l_0 (\mathbb{Z}) \otimes \mathbb{C}^2 $
\begin{align}\label{multi-op}
    \begin{aligned}
       & \widehat{A\Psi}(k) =\widehat{A}(k) \widehat{\Psi}(k), \quad k \in [0,2\pi). 
    \end{aligned}
\end{align}
\end{lemma}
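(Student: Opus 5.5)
The plan is to reduce both claims to translation invariance plus orthogonality of characters on $[0,2\pi)$. Throughout, $\langle x|A|y\rangle\in\mathrm{End}(\mathbb{C}^2)$ denotes the block $(\langle x|\otimes I_2)A(|y\rangle\otimes I_2)$. We need one standing finiteness point: for each $y$ only finitely many blocks $\langle x|A|y\rangle$ are non-zero. This holds because $A$ maps $l_0(\mathbb{Z})\otimes\mathbb{C}^2$ into itself, so $A(|y\rangle\otimes e_j)$ is finitely supported for $j=1,2$. Consequently $\widehat A(k)$ is a trigonometric polynomial with matrix coefficients, and every sum below is finite, so interchanging sums and integrals needs no justification.

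\textbf{Step 1: formula \eqref{InvFourier0}.} First reduce to $y=0$ using \eqref{Transinv0}. Since $\mathrm{T}_a|x\rangle=|x+a\rangle$, we get $\langle x|A|y\rangle=\langle x|\mathrm{T}_yA\mathrm{T}_{-y}|y\rangle$, because $\mathrm{T}_yA\mathrm{T}_{-y}=A$. Moving the shifts onto the bras and kets gives $\langle x|A|y\rangle=\langle x-y|A|0\rangle$; this step uses only that $\mathrm{T}_y^{*}=\mathrm{T}_{-y}$ on the basis. Next insert the definition of $\widehat A$:
\begin{align*}
\int_0^{2\pi}e^{ik(x-y)}\widehat A(k)\frac{dk}{2\pi}=\sum_{z}\langle z|A|0\rangle\int_0^{2\pi}e^{ik(x-y-z)}\frac{dk}{2\pi}=\langle x-y|A|0\rangle .
\end{align*}
The last equality is orthogonality of characters. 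Combining the two displays proves \eqref{InvFourier0}.

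\textbf{Step 2: formula \eqref{multi-op}.} Write $\Psi=\sum_y|y\rangle\otimes\Psi(y)$, a finite sum. Then
\begin{align*}
(A\Psi)(x)=\sum_y\langle x|A|y\rangle\Psi(y)=\sum_y\langle x-y|A|0\rangle\Psi(y),
\end{align*}
where the second equality is the reduction from Step 1. Hence
\begin{align*}
\widehat{A\Psi}(k)=\sum_{x,y}e^{-ik(x-y)}\langle x-y|A|0\rangle\,e^{-iky}\Psi(y).
\end{align*}
Substituting $z=x-y$ factorizes this finite double sum as $\widehat A(k)\widehat\Psi(k)$, which is \eqref{multi-op}.

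\textbf{Main obstacle.} The only delicate point is bookkeeping, not analysis. We must check the direction of the shift identity, namely that $\langle x|A|y\rangle=\langle x-y|A|0\rangle$ and not $\langle y-x|A|0\rangle$. This has to agree with the sign convention $e^{-ikx}$ in $\widehat A$ and $e^{+ik(x-y)}$ in the inversion formula. Once that convention is verified on basis vectors, both claims are finite algebraic identities.
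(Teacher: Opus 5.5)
Your proposal is correct and follows the paper's proof: translation invariance gives $\langle x|A|y\rangle=\langle x-y|A|0\rangle$, from which the inversion formula follows (you write out the orthogonality of characters that the paper calls obvious), and the multiplication formula is the same convolution computation with $z=x-y$. Your observation that $A(l_0(\mathbb{Z})\otimes\mathbb{C}^2)\subset l_0(\mathbb{Z})\otimes\mathbb{C}^2$ makes $\widehat A(k)$ a trigonometric polynomial, so that every interchange of sums and integrals is a finite one, is a small but welcome justification that the paper leaves implicit.
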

\begin{proof}
By \eqref{Transinv0}, we have 
$$\langle x | A| y \rangle = \langle x-y |A | 0 \rangle=:K(x-y). $$
Thus, the inversion formula \eqref{InvFourier0} is obvious.  
Furthermore, since 
$$(A\Psi)(x) = \sum_{y \in \mathbb{Z}} \langle x | A | y \rangle \Psi(y) = \sum_{y \in \mathbb{Z}} K(x-y) \Psi(y),$$
we have, for $ k \in [0,2\pi) $, 
\begin{align*}
    \begin{aligned}
       & \widehat{(A \Psi)}(k) = \sum_{x \in \mathbb{Z}} e^{-ikx} \left( \sum_{y \in \mathbb{Z}} K(x-y) \Psi(y) \right) \\
 &= \sum_{z \in \mathbb{Z}} \sum_{y \in \mathbb{Z}} e^{-ik(y+z)} K(z) \Psi(y)
  = \left( \sum_{z \in \mathbb{Z}} e^{-ikz} K(z) \right) \left( \sum_{y \in \mathbb{Z}} e^{-iky} \Psi(y) \right) \\
  &   = \widehat{A}(k) \widehat{\Psi}(k).
    \end{aligned}
\end{align*}
\end{proof}

\begin{rem}
    The free Szegedy walk operator $ \mathbf{U}^{(\infty)} $,
    as well as the     
    transition probability $ \mathbf{P}^{(\infty)}  $ of the free random walk, is translation-invariant. 
\end{rem}

\section{Thermodynamical Limits}\label{sec:3}

This section is devoted to the presentation of our first observation: 
the normalized zeta function of the reflected walk on the path graph $ P_N $ converges to the zeta function of the free walk, both in the random walk case and the Szegedy walk case. 
\begin{theorem}\label{M-Th3}
We have
\begin{align}\label{Th1}
\begin{aligned}
    \lim_{N \to \infty} \bar{\zeta}_{X^{(N)}} (u) ^{-1}
    &=\exp \left( \int_0^{2\pi}
    \mathrm{Log} \left( 1- 2 u \sqrt{pq}\cos\theta  \right) \frac{d\theta}{2\pi}\right) \\
    &= \frac{1 + \sqrt{1- 4pqu^2 }}{2}
    \equiv \frac{1 + e^{\frac{1}{2}\mathrm{Log} (1- 4pqu^2)}}{2} \\
    &= \zeta_X (u)^{-1}, 
    \quad |u|<1, 
    \end{aligned}
\end{align}
and the convergence is uniform in every compact set of $ \{ z \in \mathbb{C} | |z| <1 \} $.
\end{theorem}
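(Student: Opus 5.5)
The plan is to compute $\det(I_{N+2}-u\mathbf{P}^{(N)})$ exactly, pass to the limit as a Riemann sum, and then evaluate the resulting integral and compare it with $\zeta_X(u)$.

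\textbf{Exact spectrum.} The matrix $\mathbf{P}^{(N)}$ is tridiagonal with zero diagonal. Its off-diagonal products are $p\cdot q$ for the interior pairs and $1\cdot q$, $p\cdot 1$ at the two boundary pairs. The diagonal similarity $D^{-1}\mathbf{P}^{(N)}D$ therefore turns it into a symmetric tridiagonal matrix with entries $\sqrt{pq}$ in the bulk and $\sqrt{q}$, $\sqrt{p}$ at the ends. We then compute the characteristic polynomial by the three-term recurrence for tridiagonal determinants. In the bulk the minors $D_k(\lambda)$ satisfy $D_k=\lambda D_{k-1}-pq\,D_{k-2}$, so they are Chebyshev polynomials in $\lambda/(2\sqrt{pq})$. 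Including the boundary rows should give
\begin{align*}
\det(\lambda I_{N+2}-\mathbf{P}^{(N)})=(\lambda^2-1)(pq)^{N/2}\,U_N\!\left(\tfrac{\lambda}{2\sqrt{pq}}\right).
\end{align*}
This reflects the fact that the reflected chain has eigenvalues $\pm1$ (stationarity and bipartiteness), while the remaining eigenvalues are $2\sqrt{pq}\cos\frac{\pi j}{N+1}$, $j=1,\dots,N$. This mirrors the Dirichlet spectrum of $P_N$ in the symmetrized picture. I would verify the identity by induction, or by checking $N=2$ and the recurrence. Consequently
\begin{align*}
\det(I_{N+2}-u\mathbf{P}^{(N)})=(1-u^2)\prod_{j=1}^N\left(1-2u\sqrt{pq}\cos\tfrac{\pi j}{N+1}\right).
\end{align*}

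\textbf{Limit.} For $|u|\le r<1$ we have $|2u\sqrt{pq}\cos\theta|\le r<1$, so each factor lies in the disc $\{|z-1|<1\}$. On that disc $\mathrm{Log}$ is holomorphic and the principal branch adds without $2\pi i$ ambiguities. Taking the principal logarithm of the whole determinant may differ from the sum of the factor logarithms by multiples of $2\pi i$. However, $-\frac{1}{N+1}\mathrm{Log}\det$ is then only shifted by $2\pi i m/(N+1)$, and exponentiating each factor separately avoids the issue. I will therefore write $\bar\zeta_{X^{(N)}}(u)^{-1}=\exp\bigl(\frac{1}{N+1}[\mathrm{Log}(1-u^2)+\sum_j \mathrm{Log}(1-2u\sqrt{pq}\cos\frac{\pi j}{N+1})]\bigr)$. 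This interpretation of the $(N+1)$-th root is the point to be careful about, and I expect it to be the main technical obstacle in stating the result precisely. The boundary term is $O(1/N)$ uniformly. The sum is a Riemann sum for $\int_0^\pi \mathrm{Log}(1-2u\sqrt{pq}\cos\theta)\frac{d\theta}{\pi}$, which equals the $[0,2\pi)$ integral with $d\theta/2\pi$ by symmetry. The integrand is smooth in $\theta$ and holomorphic in $u$ with bounds uniform on $|u|\le r$, so the convergence is uniform on compacts.

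\textbf{Evaluation.} For $|a|<1/2$ we expand $\mathrm{Log}(1-2a\cos\theta)=-\sum_n \frac{(2a\cos\theta)^n}{n}$ and integrate term by term, keeping only even $n$. This gives $-\sum_m\binom{2m}{m}\frac{a^{2m}}{2m}$. Comparing derivatives shows this equals $\log\frac{1+\sqrt{1-4a^2}}{2}$ with $a=u\sqrt{pq}$. Alternatively, we can use Jensen's formula for $1-a(z+z^{-1})$ on $|z|=1$, whose roots inside the disc give the Mahler measure. The identity extends to $|u|<1$ by analytic continuation, since both sides are holomorphic and nonvanishing there. Finally, $P(X_{2m}=0)=\binom{2m}{m}(pq)^m$, so $\log\zeta_X(u)=\sum_m\binom{2m}{m}\frac{(pq)^m u^{2m}}{2m}$. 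This is exactly the negative of the series above, which gives the last equality.
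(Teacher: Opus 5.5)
Your proposal is correct and follows essentially the paper's route. Like the paper, you use the explicit spectrum $\{\pm1\}\cup\{2\sqrt{pq}\cos\frac{m\pi}{N+1}\}_{m=1}^{N}$, a Riemann sum for the factorwise logarithms, and then the series evaluation and identification with $\zeta_X$ that the paper leaves implicit. The one difference is that you derive the spectrum rather than quote it; expanding at both ends gives it in one line, $\det(\lambda I_{N+2}-\mathbf{P}^{(N)})=\lambda^{2}D_N-\lambda(p+q)D_{N-1}+pq\,D_{N-2}=(\lambda^{2}-1)D_N$, with $D_k$ your bulk minors.

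Your factorwise reading of the $(N+1)$-th root is exactly what the paper's proof tacitly uses, but it is not a harmless shift as your wording suggests: the integer $m$ grows linearly in $N$ when $u^{2}\notin\mathbb{R}$. With the literal principal $\mathrm{Log}$ of the whole determinant one has $|\arg\bar{\zeta}_{X^{(N)}}(u)^{-1}|\le\pi/(N+1)$, so the limit would be $\bigl|\frac{1+\sqrt{1-4pqu^{2}}}{2}\bigr|$. The statement therefore holds only for the holomorphic branch normalized by $1$ at $u=0$, which is precisely your factorwise choice.
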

\begin{proof}
Since (see \cite[Lemma 1]{IKS2012QIP})
\begin{align}\label{spec}
\begin{aligned}
    & \mathrm{Spec} (\mathbf{P}^{(N)}) \\
   & = \left\{ \lambda_0 =1, \lambda_m = 2 \sqrt{pq} \cos \left( \frac{m \pi}{N+1}\right)\,(m=1, \cdots, N), \lambda_{N+1} = -1 
    \right\}, 
    \end{aligned}
\end{align}
we have, for $ |u| < 1 $, 
\begin{align*}
\begin{aligned}
&\bar{\zeta}_{X^{(N)}} (u)^{-1}
     = \exp \bigg( \frac{1}{N+1} \mathrm{Log} (1-u) 
     + \frac{1}{N+1} \mathrm{Log} (1+u) \\
     & \hspace{2cm} + \frac{1}{N+1} \sum_{m=1}^N 
     \mathrm{Log}\left( 1- 2 \sqrt{pq} u \cos \left( \frac{m \pi}{N+1}\right)\right)
     \bigg) \\
     &\longrightarrow \exp \left( \int_0^{2 \pi}
    \mathrm{Log} \left( 1- 2 u \sqrt{pq}\cos\theta  \right)
    \frac{\mathrm{d}\theta}{2 \pi}
    \right) \, (N \to \infty) = \frac{1 + \sqrt{1- 4pqu^2 }}{2}. 
\end{aligned}
\end{align*}
\end{proof}

\begin{cor}\label{recurrence-est}
For $ n =1, 2, \cdots $, 
\begin{align*}
 \lim_{N \to \infty} \frac{\mathrm{Tr} ((\mathbf{P}^{(N)})^n)}{N+1}
 = 
 \begin{cases}
     (pq)^l 
     \binom{2l}{l}
     & n=2l, l=1, 2, \cdots, \\
    0 & n=2l-1, l=1,2, \cdots. 
 \end{cases}
\end{align*}
\end{cor}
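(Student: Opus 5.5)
The plan is to compute the normalized traces directly from the spectrum \eqref{spec}, rather than extracting coefficients from the limit in Theorem \ref{M-Th3}. By \eqref{spec},
\begin{align*}
\frac{\mathrm{Tr}((\mathbf{P}^{(N)})^n)}{N+1} = \frac{1+(-1)^n}{N+1} + \frac{1}{N+1}\sum_{m=1}^{N} \left(2\sqrt{pq}\cos\frac{m\pi}{N+1}\right)^n .
\end{align*}
The first term tends to $0$. The second is a Riemann sum, with mesh $\pi/(N+1)$, for the continuous function $\theta\mapsto(2\sqrt{pq}\cos\theta)^n$. Dividing by $N+1$ and using the factor $\frac{1}{\pi}$ corresponding to the mesh, it converges to
\begin{align*}
\frac{1}{\pi}\int_0^{\pi}(2\sqrt{pq}\cos\theta)^n\,d\theta=\int_0^{2\pi}(2\sqrt{pq}\cos\theta)^n\frac{d\theta}{2\pi}.
\end{align*}
The last equality uses the symmetry $\cos(2\pi-\theta)=\cos\theta$.

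Next I evaluate this integral by writing $2\cos\theta=e^{i\theta}+e^{-i\theta}$ and expanding binomially. Only the constant Fourier mode survives integration. Hence the integral vanishes for odd $n$ and equals $(pq)^l\binom{2l}{l}$ for $n=2l$. This is exactly the claimed formula.

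As a consistency check, I can instead read the statement off Theorem \ref{M-Th3}. Write the logarithm of $\det(I-u\mathbf{P}^{(N)})^{-1/(N+1)}$ as $\sum_n \frac{u^n}{n}\frac{\mathrm{Tr}((\mathbf{P}^{(N)})^n)}{N+1}$. These functions are holomorphic on $|u|<1$ and converge uniformly on compacts. By Weierstrass' theorem, together with the uniform boundedness of $\mathrm{Log}$ near $1$ for small $|u|$, their Taylor coefficients converge to those of $\log\zeta_X(u)$. By \eqref{FreeZeta} those coefficients are $P(X_n=0\mid X_0=0)/n$. The return probability of the free walk is $\binom{2l}{l}(pq)^l$ for $n=2l$ and $0$ for odd $n$.

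The only mildly delicate point arises if one takes this second route. There one must justify passing from the uniform convergence of $\bar\zeta$ to the convergence of its logarithm, that is, choosing the branch consistently near $u=0$. The direct Riemann-sum argument avoids this, so I would present that one.
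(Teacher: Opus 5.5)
Your proof is correct, but it takes a different route from the paper's.

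The paper never computes the traces for a fixed $n$. It reads them off from Theorem~\ref{M-Th3}: it expands $-\mathrm{Log}\frac{1+\sqrt{1-4pqu^{2}}}{2}=\sum_{l\ge1}(pq)^{l}\binom{2l}{l}\frac{u^{2l}}{2l}$ and equates coefficients with $\lim_{N\to\infty}\sum_{n}\frac{\mathrm{Tr}((\mathbf{P}^{(N)})^{n})}{(N+1)n}u^{n}$. This is essentially your ``consistency check''. The paper leaves implicit both the interchange of $N\to\infty$ with coefficient extraction and the choice of branch of the logarithm.

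You instead apply the Riemann-sum step from the proof of Theorem~\ref{M-Th3} directly to the $n$-th moment $\theta\mapsto(2\sqrt{pq}\cos\theta)^{n}$, before any summation over $n$. You then evaluate that moment by isolating the constant Fourier mode.

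Your route gives a self-contained, elementary argument that needs only the spectrum \eqref{spec}. It avoids the closed form of the Mahler measure, the power-series identity for $\log\frac{1+\sqrt{1-4x}}{2}$, and any Weierstrass or Cauchy-estimate argument. It also avoids the branch issue you correctly flagged.

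The paper's route instead presents the corollary as a direct consequence of the generating-function limit. It shows at once that the limiting normalized traces are the return probabilities of the free walk, i.e.\ the Taylor coefficients of $\log\zeta_X$. That is the interpretation the paper builds on later in Remark~\ref{rem:identity-is-Mahler}, and your second route recovers exactly this point.
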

\begin{proof}
It follows from the following relations
implied by \eqref{Th1}; 
for $ |u| < 1 $, 
\begin{align*}
    \begin{aligned}
        -\mathrm{Log} \left(\frac{1 + \sqrt{1- 4pqu^2 }}{2}\right) 
       & = \sum_{l=1}^\infty 
        (pq)^l  \binom{2l}{l}
     \frac{u^{2l}}{2l} = \lim_{N \to \infty} \sum_{n=1}^\infty 
     \frac{ \mathrm{Tr} ((\mathbf{P}^{(N)})^n)}{(N+1)n}{u^n}.     
    \end{aligned}
\end{align*}
\end{proof}

\begin{rem}
We note that 
\begin{align*}
    \det \left( I_{N+2} - u \mathbf{P}^{(N)} \right)
    = (1-u^2) u^N (pq)^{N/2} U_N \left((2u \sqrt{pq})^{-1}\right),
\end{align*}
where  $U_N(x)$ is the Chebyshev polynomial of the second kind; 
$$ U_N (x) := 2^N \prod_{m=1}^N \left(x - \cos\frac{m\pi}{N+1}\right). $$
\end{rem}

For the quantum walk, we have the following.
\begin{theorem}\label{Th3}
The sequence of the normalized zeta functions $\{ \bar{\zeta}_{Y^{(N)}} (u) \}_{N \in \mathbb{N}}$ for the reflected Szegedy walks on $P_N$ converges to the zeta function $\zeta_Y(u)$ of the free Szegedy walk on $\mathbb{Z}$ as $N \to \infty$:
\begin{align*}
    \lim_{N \to \infty} \bar{\zeta}_{Y^{(N)}} (u) = \zeta_Y(u), \quad |u| < 1. 
\end{align*}
The limit admits the following integral representation:
\begin{align}\label{QWlimzeta_integral}
    \zeta_Y (u)^{-1} = \exp \left( \int_0^{2 \pi} \mathrm{Log} \left( 1- 4 u \sqrt{pq}\cos\theta + u^2 \right) \frac{\mathrm{d}\theta}{2 \pi} \right).
\end{align}
The convergence is uniform on every compact set in $\{ u \in \mathbb{C} : |u| < 1 \}$.
\end{theorem}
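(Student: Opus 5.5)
The plan mirrors the proof of Theorem \ref{M-Th3}: compute $\mathrm{Spec}(\mathbf{U}^{(N)})$ explicitly, take the Riemann-sum limit of the normalized log-determinant, and then identify the limit with $\zeta_Y(u)$ via Fourier analysis on $\mathbb{Z}$ (Lemma \ref{FourLemma}).

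\textbf{Step 1: spectrum of $\mathbf{U}^{(N)}$.} I would not compute this directly. Instead I would use the spectral mapping theorem for Szegedy walks (the global Konno--Sato correspondence). It states that $\det(I-u\mathbf{U}^{(N)})$ equals $(1-u^2)^{|E|-|V|}\det\big((1+u^2)I - 2u\,\mathbf{P}_{\mathrm{sym}}\big)$ up to the boundary arcs. Here $\mathbf{P}_{\mathrm{sym}}$ is the symmetrized transition matrix, and $\mathrm{Spec}(\mathbf{P}_{\mathrm{sym}})=\mathrm{Spec}(\mathbf{P}^{(N)})$ is given by \eqref{spec}. Concretely, each eigenvalue $\lambda_m=2\sqrt{pq}\cos(m\pi/(N+1))$ with $1\le m\le N$ produces the pair of eigenvalues $e^{\pm i\theta_m}$ with $\cos\theta_m=\lambda_m$. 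The remaining two eigenvalues come from $\lambda_0=1$ and $\lambda_{N+1}=-1$, possibly combined with extra $\pm1$ eigenvalues. The dimension count is $\dim\mathcal{H}_N=2N+2$, so the $N$ interior pairs use $2N$ eigenvalues and only two remain. One must check carefully which two these are, for instance by a direct eigenvector computation at the boundary. This gives
\begin{align*}
\det(I_{2N+2}-u\mathbf{U}^{(N)}) = B_N(u)\prod_{m=1}^N\Big(1-4u\sqrt{pq}\cos\tfrac{m\pi}{N+1}+u^2\Big),
\end{align*}
where $B_N(u)$ is a product of at most two factors of the form $1\pm u$. I expect this bookkeeping of the boundary and "birth" eigenvalues to be the main obstacle. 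Fortunately only its boundedness matters: for fixed $u$ with $|u|<1$, $B_N(u)$ takes finitely many values, all nonzero.

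\textbf{Step 2: the limit.} Since $|u|<1$ and $|4\sqrt{pq}\cos\theta|\le 2$, the factor $1-4u\sqrt{pq}\cos\theta+u^2=(1-ue^{i\theta'})(1-ue^{-i\theta'})$ never vanishes. Each root $e^{\pm i\theta'}$ has modulus $1$ or is real with product $1$, so zeros occur only when $|u|\ge$ the smaller root modulus $\le 1$. More carefully, I would show the factor stays away from zero uniformly on compact subsets of the disc. This can be arranged because the roots in $u$ satisfy $u_1u_2=1$ and have $|u_i|=1$ whenever $|2\sqrt{pq}\cos\theta|\le1$, which holds since $2\sqrt{pq}\le1$. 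Away from zero, $\mathrm{Log}$ is continuous in $\theta$; with a suitable branch one should use $\log$ of the product, or note that the real part of the factor is positive near $u=0$ and extend by analytic continuation in $u$. The Riemann sum $\frac{1}{N+1}\sum_{m=1}^N$ over $\theta_m=m\pi/(N+1)$ then converges to $\int_0^\pi\frac{d\theta}{\pi}$, which by the symmetry $\theta\mapsto2\pi-\theta$ equals $\int_0^{2\pi}\frac{d\theta}{2\pi}$. Meanwhile $\frac{1}{N+1}\mathrm{Log}\,B_N(u)\to0$. Uniformity on compacts follows from equicontinuity of the integrand in $(\theta,u)$. This yields \eqref{QWlimzeta_integral} as the limit of $\bar\zeta_{Y^{(N)}}(u)^{-1}$.

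\textbf{Step 3: identification with $\zeta_Y$.} By Lemma \ref{FourLemma}, $\mathrm{Tr}\langle0|(\mathbf{U}^{(\infty)})^n|0\rangle=\int_0^{2\pi}\mathrm{Tr}\,\widehat{\mathbf{U}}(k)^n\frac{dk}{2\pi}$. A direct computation gives $\widehat{\mathbf{U}}(k)$ as a $2\times2$ unitary matrix with determinant $-1\cdot(\text{phase})$ and characteristic polynomial $\det(I-u\widehat{\mathbf{U}}(k))=1-4u\sqrt{pq}\cos\theta(k)+u^2$ after a change of variable $\theta=\theta(k)$, presumably $\theta=k$. Then
\begin{align*}
\sum_n\frac{u^n}{n}\mathrm{Tr}\,\widehat{\mathbf{U}}(k)^n=-\log\det(I-u\widehat{\mathbf{U}}(k)),
\end{align*}
and integrating in $k$ (with dominated convergence for $|u|<1$) gives $\zeta_Y(u)^{-1}$ equal to the right-hand side of \eqref{QWlimzeta_integral}, completing the proof.
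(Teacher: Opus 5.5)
Your proposal is correct and follows the paper's proof step for step: the spectrum of $\mathbf{U}^{(N)}$, a Riemann sum for the normalized log-determinant, then identification with $\zeta_Y$ via Lemma~\ref{FourLemma} and $\det(I_2-u\widehat{\mathbf{U}^{(\infty)}}(k))=1-4u\sqrt{pq}\cos k+u^{2}$ (in fact $\det\widehat{\mathbf{U}^{(\infty)}}(k)=+1$, since the shift block and the coin each have determinant $-1$); the only difference is that you derive the spectrum from the Szegedy spectral mapping theorem instead of citing \cite[Lemma 2]{IKS2012QIP}, and since $|E|-|V|=-1$ on $P_N$ (no boundary correction is needed) that theorem already gives $B_N(u)=1-u^{2}$ exactly, matching \eqref{explicitUN}, so the eigenvalue bookkeeping you worried about disappears. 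Like the paper, you pass from $\frac{1}{N+1}\mathrm{Log}\det(I_{2N+2}-u\mathbf{U}^{(N)})$ to $\frac{1}{N+1}\sum_{\mu}\mathrm{Log}(1-\mu u)$, which requires reading $\mathrm{Log}\det$ as the branch holomorphic on the disc and vanishing at $u=0$ (this is what your analytic-continuation remark achieves); with the literal principal branch the two differ by $2\pi i k_N/(N+1)$, and because the limit integral equals $(p-q)^{2}u^{2}+O(u^{4})$, for $p\neq q$ and, e.g., $u=re^{i\pi/4}$ with $r$ small, the integer $k_N$ grows linearly in $N$.
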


\begin{proof}
The proof consists of two parts: establishing the integral limit of the finite determinants and proving the Fourier trace identity for the free walk.

\textit{Part 1: Convergence of the normalized determinants.}
Since (see \cite[Lemma 2]{IKS2012QIP}), 
\begin{align*}
    \begin{aligned}
        & \mathrm{Spec} (\mathbf{U}^{(N)}) 
        = \left\{ \mu_0 =1, \mu_{\pm m} = e^{\pm i \varphi_m} 
\,(m=1, \cdots, N), \mu_{N+1} = -1 
    \right\}, 
    \end{aligned}
\end{align*}
where 
\begin{align*}
    \varphi_m := \arccos \left(2 \sqrt{pq} \cos \left(\frac{m\pi}{N+1} \right)\right), m=1, \cdots, N, 
\end{align*}
the normalized zeta function satisfies:
\begin{align}\label{Formula1}
 \begin{aligned}
  &  \bar{\zeta}_{Y^{(N)}} (u)^{-1} = \exp \left( \frac{1}{N+1} \sum_{\mu \in \mathrm{Spec}(\mathbf{U}^{(N)})} \mathrm{Log}(1 - \mu u) \right) \\
    &= \exp \bigg( \frac{\mathrm{Log}(1-u) + \mathrm{Log}(1+u)}{N+1} + \frac{1}{N+1} \sum_{m=1}^N \mathrm{Log}(1 - 2u \cos \varphi_m + u^2) \bigg).
 \end{aligned}
\end{align}
As $N \to \infty$, the boundary terms $\frac{1}{N+1}\mathrm{Log}(1 \pm u)$ vanish. The summation term is a Riemann sum for the following integral:
\begin{align*}
    \lim_{N \to \infty} \frac{1}{N+1} \sum_{m=1}^N \mathrm{Log}(1 - 2u \cos \varphi_m + u^2) 
    &= \int_0^1 \mathrm{Log}(1 - 2u \cos(\varphi(\theta)) + u^2) d\theta \\
    &= \int_0^\pi \mathrm{Log}(1 - 4\sqrt{pq}u \cos\theta + u^2) \frac{d\theta}{\pi},
\end{align*}
where we used $\cos(\varphi(\theta)) = 2\sqrt{pq}\cos(\pi\theta)$. Since the integrand is symmetric over $[0, 2\pi]$, we obtain \eqref{QWlimzeta_integral}.

\textit{Part 2: The Inverse Fourier Transform Identity.}
We now prove that the integral in \eqref{QWlimzeta_integral} coincides with the definition \eqref{TraceDefinition} of $\zeta_Y(u)$. 

We first note that the free Szegedy walk operator $\mathbf{U}^{(\infty)}$ is translation-invariant, 
and, for any $n \in \mathbb{N}$, the operator $(\mathbf{U}^{(\infty)})^n$ is also translation-invariant.
Then, by \eqref{InvFourier0} and \eqref{multi-op} of Lemma \ref{FourLemma}, 
\begin{align*}
    \langle x | (\mathbf{U}^{(\infty)})^n | y \rangle = \int_0^{2\pi} e^{ik(x-y)} \widehat{\mathbf{U}^{(\infty)}}(k)^n \frac{dk}{2\pi}, \quad x, y \in \mathbb{Z}.
\end{align*}
Setting $x=y=0$ and taking the trace $\mathrm{Tr}_{\mathbb{C}^2}$ on both sides and using the linearity of the integral:
\begin{align*}
\mathrm{Tr}_{\mathbb{C}^2} \langle 0 | (\mathbf{U}^{(\infty)})^n | 0 \rangle = \int_0^{2\pi} \mathrm{Tr}_{\mathbb{C}^2} (\widehat{\mathbf{U}^{(\infty)}}(k)^n) \frac{dk}{2\pi}.
\end{align*}
Then, 
\begin{align}\label{LogZetaY}
\begin{aligned}
    \mathrm{Log}\, \zeta_Y(u)^{-1} &= -\sum_{n=1}^\infty \frac{u^n}{n} \int_0^{2\pi} \mathrm{Tr}_{\mathbb{C}^2} (\widehat{\mathbf{U}^{(\infty)}}(k)^n) \frac{dk}{2\pi} \\
    &=- \int_0^{2\pi} \left( \sum_{n=1}^\infty \frac{u^n}{n} \mathrm{Tr}_{\mathbb{C}^2} (\widehat{\mathbf{U}^{(\infty)}}(k)^n) \right) \frac{dk}{2\pi} \\
    &=  \int_0^{2\pi} \mathrm{Log} \det(I_2 - u \hat{\mathbf{U}}(k)) \frac{dk}{2\pi}.
    \end{aligned}
\end{align}
Since 
\begin{align*}
    \hat{\mathbf{U}}(k) = \begin{pmatrix} 0 & e^{ik} \\ e^{-ik} & 0 \end{pmatrix} \begin{pmatrix} p-q & 2\sqrt{pq} \\ 2\sqrt{pq} & q-p \end{pmatrix} = \begin{pmatrix} 2\sqrt{pq}e^{ik} & (q-p)e^{ik} \\ (p-q)e^{-ik} & 2\sqrt{pq}e^{-ik} \end{pmatrix},
\end{align*}
we have 
\begin{align}\label{detUk}
    \det(I_2 - u\widehat{\mathbf{U}^{(\infty)}}(k)) = 1 - 4\sqrt{pq}u\cos k + u^2.
\end{align}
Combining \eqref{detUk} with \eqref{LogZetaY}, we obtain \eqref{QWlimzeta_integral}.
\end{proof}

\begin{cor}
For $ n =1, 2, \cdots $, 
\begin{align}\label{TracesQW}
 \lim_{N \to \infty} \frac{\mathrm{Tr} ((\mathbf{U}^{(N)})^n)}{N+1}
= 2 
\int_0^{2\pi} T_n ( 2 \sqrt{pq} \cos \theta ) \frac{\mathrm{d}\theta}{2 \pi}, 
\end{align}
where 
$ T_n, n=1, 2, \cdots $ are the Chebyshev polynomials of the first kind; polynomials defined by the relation 
$ T_n (\cos \theta) = \cos (n \theta) $. 
\end{cor}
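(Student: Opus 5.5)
The approach mirrors Corollary \ref{recurrence-est}: expand both sides of the convergence in Theorem \ref{Th3} as power series in $u$ and compare coefficients. A direct eigenvalue computation is also available and avoids any interchange-of-limits issue.

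\emph{Spectral computation.} By the spectrum quoted in Part 1 of the proof of Theorem \ref{Th3},
\begin{align*}
\mathrm{Tr}((\mathbf{U}^{(N)})^n) = 1 + (-1)^n + \sum_{m=1}^N \left(e^{in\varphi_m}+e^{-in\varphi_m}\right) = 1+(-1)^n + 2\sum_{m=1}^N \cos(n\varphi_m).
\end{align*}
Since $\cos\varphi_m = 2\sqrt{pq}\cos\left(\frac{m\pi}{N+1}\right) \in [-1,1]$, the defining relation of $T_n$ gives
\begin{align*}
\cos(n\varphi_m)=T_n\left(2\sqrt{pq}\cos\tfrac{m\pi}{N+1}\right).
\end{align*}
Dividing by $N+1$, the boundary contribution $(1+(-1)^n)/(N+1)$ tends to $0$. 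What remains is
\begin{align*}
\frac{2}{N+1}\sum_{m=1}^N T_n\left(2\sqrt{pq}\cos\tfrac{m\pi}{N+1}\right),
\end{align*}
which is a Riemann sum of the continuous function $\theta\mapsto T_n(2\sqrt{pq}\cos(\pi\theta))$ on $[0,1]$. It therefore converges to
\begin{align*}
2\int_0^1 T_n(2\sqrt{pq}\cos\pi\theta)\,d\theta = 2\int_0^\pi T_n(2\sqrt{pq}\cos\theta)\frac{d\theta}{\pi}.
\end{align*}
The integrand is even in $\theta$ about $\pi$, i.e.\ invariant under $\theta\mapsto 2\pi-\theta$. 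Hence this equals $2\int_0^{2\pi}T_n(2\sqrt{pq}\cos\theta)\frac{d\theta}{2\pi}$, which is \eqref{TracesQW}.

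\emph{Consistency with Theorem \ref{Th3}.} As a cross-check, one can identify the same quantity as the coefficient of $u^n/n$ in $\mathrm{Log}\,\zeta_Y(u)$. Factor
\begin{align*}
1-2u\cos\varphi+u^2=(1-ue^{i\varphi})(1-ue^{-i\varphi}),
\end{align*}
so that
\begin{align*}
-\mathrm{Log}(1-2ux+u^2)=\sum_{n\ge1}\frac{2T_n(x)}{n}u^n, \qquad x=\cos\varphi,
\end{align*}
for $|u|<1$ and $x\in[-1,1]$. Integrating this identity against \eqref{QWlimzeta_integral} reproduces the formula. This route, however, would require justifying the exchange of $\lim_N$ with the coefficient extraction. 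That exchange follows from the uniform convergence on compacta in Theorem \ref{Th3} together with Cauchy's estimates, but it is unnecessary given the direct argument above.

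\emph{Potential obstacle.} The only subtle point is that $\varphi_m=\arccos(\cdot)$ must be real so that $T_n(\cos\varphi_m)=\cos(n\varphi_m)$ holds. This is guaranteed by $2\sqrt{pq}\le 1$, which follows from $p+q=1$ via AM--GM. In any case $T_n$ is a polynomial, so the identity extends by analyticity. Beyond this, the Riemann-sum convergence of a continuous integrand is routine.
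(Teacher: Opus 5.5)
Your proof is correct and is essentially the paper's argument: the paper also reads off $\mathrm{Tr}((\mathbf{U}^{(N)})^n)=1+(-1)^n+2\sum_{m=1}^N\cos(n\varphi_m)$ from the spectrum (by expanding the logarithms in \eqref{Formula1}) and then says this ``leads to'' the limit. You make that final step explicit, correctly, via $\cos(n\varphi_m)=T_n(\cos\varphi_m)$, the Riemann sum from Part 1 of Theorem \ref{Th3}, and the symmetry $\theta\mapsto 2\pi-\theta$, so your coefficient-comparison cross-check is not needed.
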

\begin{proof}
    By expanding the functions of the form $ \log (1- z) $
    in power series in $ z $
    in \eqref{Formula1}, 
    we obtain 
    \begin{align*}
        \begin{aligned}
\frac{\mathrm{Tr} ((\mathbf{U}^{(N)})^n)}{N+1}
    = \frac{1}{N+1}
    \bigg(
    1 + (-1)^n + 2 \sum_{m=1}^N \cos (n \varphi_m)
    \bigg), 
        \end{aligned}
    \end{align*}
which leads to the relation 
    \eqref{TracesQW}. 
\end{proof}

\begin{rem}
Note that 
we can directly retrieve the ``Konno-Sato correspondence" between $ \mathbf{U}^{(N)}$ and $ \mathbf{P}^{(N)} $:
\begin{align}\label{KSforUNPN}
    \det (I_{2N+2} - u \mathbf{U}^{(N)})^{-1}
    &= \frac{1-u^2}{(1+u^2)^{N+2}} \det \left( I_{N+2} - \frac{2u}{1+u^2} \mathbf{P}^{(N)} \right)^{-1}.
\end{align}
In fact, by the explicit expression of $
 \mathrm{Spec} (\mathbf{P}^{(N)})$
 and $ \mathrm{Spec} (\mathbf{U}^{(N)})$, we have 
\begin{align*}
    \det \left( I_{N+2} - u \mathbf{P}^{(N)} \right)
    = (1-u^2) \prod_{m=1}^N \left(1- 2u \sqrt{pq} \cos \frac{m \pi}{N+1} \right)
\end{align*}
and 
\begin{align}\label{explicitUN}
    \det (I_{2N+2} - u \mathbf{U}^{(N)})
    =  (1-u^2) \prod_{m=1}^N \left(1-4 u \sqrt{pq} \cos \frac{m \pi}{N+1} +u^2 \right),
\end{align}
establishing \eqref{KSforUNPN}. 
Furthermore, via the expressions \eqref{Th1} and
\eqref{QWlimzeta_integral}, we also have a Konno-Sato type correspondence in the limit:
\begin{align*}
    \zeta_{Y}^{-1} (u) = (1+u^2) \zeta_X \left( 2(u+\frac{1}{u})^{-1} \right)^{-1}.
\end{align*}
\end{rem}

\section{
Generalized Reflections for Reflected Random Walks}\label{sec:4}

In this section, we express the transition probability $\mathbf{P}^{(N)}$ of the reflected random walk $X^{(N)}$ on $P_N$ using ``reflections" 
derived from the free random walk on $\mathbb{Z}$.

Let $ \mathsf{M} \equiv (m_{i,j}: i, j \in \mathbb{Z} ) $
be the infinite matrix given by
\begin{align}\label{matrixM}
   m_{i,j}
    = \begin{cases}
        (\frac{p}{q})^{k-1} & i=-j=k,  k \in \mathbb{N}, \\
        p^{(j+1)_-} q^{j-1} (q-p) & i+j =2k, j \leq 0, k \in \mathbb{N},  \\
        1 & i=j=0 \\
         (\frac{q}{p})^{k-1} & -i=j=k,  k \in \mathbb{N}, \\
           q^{(j-1)_+} p^{-j-1} (p-q) & -i-j =2k, j \geq 0, k \in \mathbb{N}, \\
        0 & \text{otherwise}.
    \end{cases}
\end{align}
Here we write
$a_+:=\max\{a,0\}$, and $ a_-:=\max\{-a,0\}$.
We define $ \mathcal{M}_K : \mathbb{R}^\mathbb{Z} \to \mathbb{R}^\mathbb{Z} $ for each $ K \in \mathbb{Z} $ by
\begin{align}\label{MKdef}
    \begin{aligned}
        \mathcal{M}_{K} f (K+y) = \sum_{z\in \mathbb{Z}} m_{y,z} f (K +z). 
    \end{aligned}
\end{align}
We note that 
for each $ y $, $ m_{y,z} =0 $ for all but finite $ z $  so that the operator is well-defined. 
Their adjoint operators $ \mathcal{M}_K^*$, $ K \in \mathbb{Z} $ are defined on $ \mathbb{R}^{(\mathbb{Z})} $ by 
\begin{align*}
    \begin{aligned}
        \mathcal{M}^*_{K} f (K+y) = \sum_{z\in \mathbb{Z}} m_{z,y} f (K +z). 
    \end{aligned}
\end{align*}
Here, $\mathbb{R}^{(\mathbb{Z})}$ denotes the space of real sequences with finite support (i.e., the set of functions $g: \mathbb{Z} \to \mathbb{R}$ such that $g(x) = 0$ for all but finitely many $x \in \mathbb{Z}$). 
From the definition, we have 
\begin{align*}
     \langle \mathcal{M}_K f,g \rangle_K := \sum_{z \in \mathbf{Z}} \mathcal{M}_K f(K+z) g (K+z) =  
    \langle  f,\mathcal{M}^*_K g \rangle, \quad f \in \mathbb{R}^\mathbb{Z}, g \in \mathbb{R}^{(\mathbb{Z})}.
\end{align*}

\begin{lemma}
For any $ K \in \mathbb{Z} $, the operator $ \mathcal{M}_K $ and $ \mathcal{M}^*_K $ are involutions on $ \mathbb{R}^{\mathbb{Z}}$
   and $\mathbb{R}^{(\mathbb{Z})}$, i.e., $ \mathcal{M}_K^2 = id_{\mathbb{R}^{\mathbb{Z}}} $ and $ (\mathcal{M}^*_K)^2 = id_{\mathbb{R}^{(\mathbb{Z})}} $.
\end{lemma}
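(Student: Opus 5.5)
By the definition \eqref{MKdef}, $\mathcal{M}_K=\mathrm{T}_K\mathcal{M}_0\mathrm{T}_{-K}$, where $\mathrm{T}_a$ acts on functions by translation. It therefore suffices to treat $K=0$, and there to show the matrix identity $\mathsf{M}^2=I$, i.e.
\begin{align*}
\sum_{z\in\mathbb{Z}} m_{y,z}\,m_{z,w}=\delta_{y,w},\qquad y,w\in\mathbb{Z}.
\end{align*}
Each row of $\mathsf{M}$ has finite support, so this sum is finite. For $f\in\mathbb{R}^{\mathbb{Z}}$ we get $\mathcal{M}_0^2f(y)=\sum_w(\mathsf{M}^2)_{y,w}f(w)$, because rearranging finitely many terms is legitimate. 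For the adjoint, $(\mathcal{M}_0^*)^2g(y)=\sum_w(\mathsf{M}^2)_{w,y}\,g(w)$, which is a finite sum since $g$ has finite support. Hence $(\mathcal{M}_K^*)^2=id$ follows from the same identity read through the transpose, even though the columns of $\mathsf{M}$ are infinite.

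Next I would read off the block structure of $\mathsf{M}$ from \eqref{matrixM}.
\begin{itemize}
\item Row $0$ is $e_0$.
\item For $y\ge1$, row $y$ is supported on $z\le0$. Its entries are $(p/q)^{y-1}$ at $z=-y$, and the tail entries $p^{(z+1)_-}q^{z-1}(q-p)$ at $z\equiv y \pmod 2$ with $2-y\le z\le 0$.
\item Symmetrically, for $y\le -1$, row $y$ is supported on $z\ge0$.
\end{itemize}
The map $i\mapsto -i$ together with $p\leftrightarrow q$ sends $\mathsf{M}$ to itself.

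So $\mathsf{M}$ fixes coordinate $0$ and exchanges the half-lines. Consequently $(\mathsf{M}^2)_{0,w}=\delta_{0,w}$ is immediate. For $y\ge1$, $(\mathsf{M}^2)_{y,w}$ vanishes unless $w\ge0$, since we pass positive $\leftarrow$ nonpositive $\leftarrow$ nonnegative. By the symmetry it suffices to verify the case $y\ge1$, $w\ge0$; the case $y\le-1$ then follows by applying $i\mapsto-i$, $p\leftrightarrow q$.

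The core step is the explicit check
\begin{align*}
\sum_{z\le 0} m_{y,z}\,m_{z,w}=\delta_{y,w},\qquad y\ge1,\ w\ge0 .
\end{align*}
Here $m_{0,w}=\delta_{0,w}$, and for $z<0$ the nonzero $m_{z,w}$ are $(q/p)^{-z-1}$ at $w=-z$ plus tail terms at $w\equiv -z \pmod 2$ with $w\le -z-2$. Both factors are nonzero only when $z\equiv y\equiv w \pmod 2$ and $w\le y$, so $(\mathsf{M}^2)_{y,w}$ vanishes off that parity class and for $w>y$. This leaves three cases.
\begin{itemize}
\item $w=y$: only $z=-y$ contributes, giving $(p/q)^{y-1}(q/p)^{y-1}=1$.
\item $w<y$: the diagonal-times-tail terms and the tail-times-diagonal terms combine with a tail-times-tail geometric sum over $z$ between $-y$ and $-w-2$ in steps of $2$. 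The factors $(q-p)$ together with the geometric ratio $q/p$ (or $p/q$) should telescope to $0$; this is where the specific exponents $p^{(j+1)_-}q^{j-1}$ are designed to work.
\item $w=0$: handled separately, using that the $z=0$ term $m_{y,0}m_{0,0}$ enters alongside the $z=-y$ term $m_{y,-y}m_{-y,0}$.
\end{itemize}

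I expect the main obstacle to be this geometric-sum cancellation, and in particular the bookkeeping of the exponents $(j\pm1)_\pm$ at the edge cases $z=-1$, $z=0$ and $w\in\{0,1\}$. I would first try to evaluate the tail-times-tail sum $\sum_{z} q^{\cdot}p^{\cdot}(q-p)^2$ in closed form as $(q-p)\bigl[(q/p)^{a}-(q/p)^{b}\bigr]$-type differences. If the index chase becomes messy, a cleaner fallback is generating functions: multiply row $y$ by $x^y$ and sum, turning the block product into a rational-function identity in $x$ whose verification is a one-line algebra check using $p+q=1$.
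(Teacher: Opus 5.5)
Your reductions are correct and are the paper's own: $\mathcal{M}_K$ is a translate of $\mathcal{M}_0$, everything reduces to $\mathsf{M}^2=I$, and your three cases $w=y$, $1\le w<y$, $w=0$ are exactly the paper's $BC=I$ and $A+BD=0$. Your treatment of $(\mathcal{M}_0^*)^2$ via $(\mathsf{M}^2)_{w,y}$ is, if anything, more explicit than the paper's duality remark. The gap is that the proposal stops at the only step with content. ``Should telescope to $0$'' is an expectation, and this lemma \emph{is} that cancellation. For $y>w\ge1$ with $y-w=2m$ and $\alpha=p/q$, what has to be checked is
\begin{align*}
m_{y,-w}\,m_{-w,w}&=q^{-2}(q-p),\qquad m_{y,-y}\,m_{-y,w}=q^{-2}\alpha^{2m-2}(p-q),\\
\sum_{k=1}^{m-1}m_{y,-w-2k}\,m_{-w-2k,w}&=-(p-q)^{2}q^{-4}\sum_{k=1}^{m-1}\alpha^{2k-2}=q^{-2}(p-q)\bigl(1-\alpha^{2m-2}\bigr),
\end{align*}
and the three pieces then add to $0$. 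Note two corrections to your description: the tail-times-tail range is $-y+2\le z\le -w-2$, and the ratio per step is $\alpha^{2}$. The last equality uses $1-\alpha^2=(q-p)(q+p)/q^2=(q-p)/q^2$. This is where $p+q=1$ is indispensable: without it the total is a nonzero multiple of $1-(p+q)^{-1}$ as soon as $m\ge2$ and $p\ne q$. So this hypothesis must enter your main route, not only the fallback. Set $p=q$ aside first, since the formula divides by $1-\alpha^2$ while all tails vanish anyway. For $w=0$, $y=2n$, there are also middle terms $z=-2k$, $1\le k\le n-1$, besides $z=0$ and $z=-y$. With the edge values $m_{y,0}=q^{-1}(q-p)$ and $m_{-y,0}=p^{-1}(p-q)$, the total is $q^{-1}(q-p)+q^{-1}(p-q)(1-\alpha^{2n-2})+q^{-1}\alpha^{2n-2}(p-q)=0$.

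Your generating-function fallback is less immediate than you suggest. In the raw frame the tail entries of $\mathsf{M}$ depend only on the column index, so the block product is not a convolution. Weighting rows by $x^y$ therefore does not factor it into two generating functions. It does become a convolution after the symmetrizing conjugation $\tilde m_{i,j}=\alpha^{-(i-j)/2}m_{i,j}$ of Section~\ref{sec:6}. By the entries computed in Lemma~\ref{lem:images}, one then gets $(\tilde{\mathsf M}^2)_{y,w}=[s^{(y-w)/2}]\,V(s)W(s)$ for $y\ge w\ge1$, and $VW\equiv1$ by \eqref{eq:WV}; only column $0$ still needs a separate edge check. That route would be cleaner than the paper's index computation. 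It would also show that the involution property and the exact cancellation for even words in Lemma~\ref{lem:translations} rest on the same identity.
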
 

\begin{proof}
The case $ p=q =1/2 $, it is trivial. We concentrate on the case where $ p \ne q $. 
By the definition \eqref{MKdef},
we see that $ \mathsf{M}^2 = I $ implies 
$\mathcal{M}_K^2 = \text{id}_{\mathbb{R}^\mathbb{Z}}$ for all $ K \in \mathbb{Z} $. 

Let us partition the index set $\mathbb{Z}$ into three sets: $\mathbb{Z}_{++} = \{1, 2, \dots\}$, $\mathbb{Z}_0 = \{0\}$, and $\mathbb{Z}_{--} = \{-1, -2, \dots\}$. We analyze the non-zero entries of $\mathsf{M} = (m_{i,j})$:
\begin{itemize}
    \item {Row $i=0$}: $m_{0,0} = 1$. For $j \neq 0$,  $m_{0,j} = 0$. 
    \item {Rows $i \in \mathbb{Z}_{++}$}: For $m_{i,j}$ to be non-zero, we must be in Case 1 ($j = -i < 0$) or Case 2 ($i+j = 2k > 0$ and $j \le 0$). Thus, $m_{i,j} = 0$ for all $j > 0$.
    \item {Rows $i \in \mathbb{Z}_{--}$}: For $m_{i,j}$ to be non-zero, we must be in Case 4 ($j = -i > 0$) or Case 5 ($-i-j = 2k > 0$ and $j \ge 0$). Thus, $m_{i,j} = 0$ for all $j < 0$.
\end{itemize}
This forces $\mathsf{M}$ to have a block structure:
\begin{align}\label{Def-Block-M}
\mathsf{M} = \begin{pmatrix} 0 & A & B \\ 0 & 1 & 0 \\ C & D & 0 \end{pmatrix} 
\end{align}
where the blocks correspond to $\mathbb{Z}_{++}, \mathbb{Z}_0, \mathbb{Z}_{--}$. 
Squaring this matrix yields:
\[ 
\mathsf{M}^2 = \begin{pmatrix} 0 & A & B \\ 0 & 1 & 0 \\ C & D & 0 \end{pmatrix}^2 = \begin{pmatrix} BC & A+BD & 0 \\ 0 & 1 & 0 \\ 0 & CA+D & CB \end{pmatrix} 
\]
To prove $ \mathsf{M}^2 = I$, we must show:
\begin{enumerate}
    \item $BC = I_{\mathbb{Z}_+}$
    \item $A + BD = 0$
\end{enumerate}
The identities $CB = I_{\mathbb{Z}_-}$ and $CA+D=0$ follow by the symmetry under exchanging $p \leftrightarrow q$ and $\mathbb{Z}_{++} \leftrightarrow \mathbb{Z}_{--}$.

Let $x, y \in \mathbb{Z}_{++}$. 
The block $B$ maps $\mathbb{Z}_{--}$ to $\mathbb{Z}_{++}$. 
Its entries are $B_{x,y} = m_{x, -y}$. 
\begin{itemize}
    \item If $x = y$, Case 1 gives $B_{x,x} = (p/q)^{x-1}$.
    \item If $x > y$ and $x \equiv y \pmod 2$, Case 2 gives $B_{x,y} = p^{(-y+1)_-} q^{-y-1} (q-p) = p^{y-1} q^{-y-1} (q-p)$.
    \item Otherwise, $B_{x,y} = 0$.
\end{itemize}
The block $C$ maps $\mathbb{Z}_{++}$ to $\mathbb{Z}_{--} $. Its entries are $C_{x,y} = m_{-x, y}$. 
\begin{itemize}
    \item If $x = y$, Case 4 gives $C_{x,x} = (q/p)^{x-1}$.
    \item If $x > y$ and $x \equiv y \pmod 2$, Case 5 gives $C_{x,y} = q^{(y-1)_+} p^{-y-1} (p-q) = q^{y-1} p^{-y-1} (p-q)$.
    \item Otherwise, $C_{x,y} = 0$.
\end{itemize}
Since $B, C$ are lower triangular, $(BC)_{x,y} = \sum_{z=y}^x B_{x,z} C_{z,y}$.
For $x = y$:
\[ 
(BC)_{x,x} = B_{x,x} C_{x,x} = (p/q)^{x-1} (q/p)^{x-1} = 1. 
\]
For $x > y$ and $x \not\equiv y \pmod 2$, we trivially have $(BC)_{x,y} = 0$.
For $x > y$ with $x - y = 2m \ge 2$, the sum has indices $z = y, y+2, \dots, x$. 
The first term ($z=y$) becomes 
\[ 
B_{x,y} C_{y,y} = p^{y-1} q^{-y-1} (q-p) (q/p)^{y-1} = q^{-2}(q-p). 
\]
The last term ($z=x$) is given by
\[ 
B_{x,x} C_{x,y} = (p/q)^{x-1} q^{y-1} p^{-y-1} (p-q) = p^{x-y-2} q^{-x+y} (p-q). 
\]
The middle terms ($z = y+2k$ for $1 \le k \le m-1$) are obtained as 
\begin{align*}
B_{x,z} C_{z,y} &= \left( p^{z-1} q^{-z-1} (q-p) \right) \left( q^{y-1} p^{-y-1} (p-q) \right) \\
&= - (p-q)^2 p^{z-y-2} q^{y-z-2}.
\end{align*}
Summing the middle terms gives,
by writing $ \alpha = p/q $, 
\[ 
S_{\text{mid}}:=\sum_{k=1}^{m-1} B_{x,y+2k} C_{y+2k,y} = -(p-q)^2 q^{-4} \sum_{k=1}^{m-1} (\alpha^2)^{k-1} = -(p-q)^2 q^{-4} \frac{1 - \alpha^{2m-2}}{1 - \alpha^2}. 
\]
Using the condition $p+q=1$, we simplify the denominator: $1 - \alpha^2 = \frac{q^2 - p^2}{q^2} = \frac{q-p}{q^2}$. Thus
\[ 
S_{\text{mid}} = -(p-q)^2 q^{-4} \frac{q^2}{q-p} (1 - \alpha^{2m-2}) = q^{-2}(p-q) (1 - \alpha^{2m-2}). 
\]
Adding the first, middle, and last terms:
\begin{align*}
(BC)_{x,y} &= q^{-2}(q-p) + q^{-2}(p-q) - q^{-2}(p-q)\alpha ^{2m-2} + \alpha^{2m-2} q^{-2}(p-q) \\
&= q^{-2} (q-p + p-q) = 0.
\end{align*}
Thus, $BC = I_{\mathbb{Z}_+}$.

Next, we prove that $A + BD = 0$, 
For $x, y \in \mathbb{Z}_+$:
\begin{itemize}
    \item $A_x = m_{x, 0} = q^{-1}(q-p)$ if $x$ is even, and $0$ if odd.
    \item $D_y = m_{-y, 0} = p^{-1}(p-q)$ if $y$ is even, and $0$ if odd.
\end{itemize}
We compute $(BD)_x = \sum_{y \ge 1} B_{x,y} D_y$. If $x$ is odd, $B_{x,y}$ requires $y$ to be odd, but $D_y = 0$ for odd $y$. 
Hence $(BD)_x = 0 = -A_x$.
If $x = 2n$, the non-zero terms occur for $y = 2m$ where $1 \le m \le n$. 
For $m < n$:
\[ 
B_{2n, 2m} D_{2m} = \left( p^{2m-1} q^{-2m-1} (q-p) \right) \left( p^{-1}(p-q) \right) = -q^{-3} (p-q)^2 \alpha^{2m-2}. 
\]
For $m = n$:
\[ 
B_{2n, 2n} D_{2n} = \alpha^{2n-1} p^{-1} (p-q) = q^{-1} (p-q) \alpha^{2n-2}. 
\]
Summing the terms for $m < n$ gives a geometric series identical in structure to Step 3:
\[ 
\sum_{m=1}^{n-1} -q^{-3} (p-q)^2 \alpha^{2m-2} = -q^{-3} (p-q)^2 \frac{1 - \alpha^{2n-2}}{1 - \alpha^2} = q^{-1}(p-q) (1 - \alpha^{2n-2}). 
\]
Adding the $m=n$ term, the $\alpha^{2n-2} $ dependencies cancel:
\[ 
(BD)_{2n} = q^{-1}(p-q) (1 - \alpha^{2n-2}) + q^{-1}(p-q) \alpha^{2n-2} = q^{-1}(p-q). 
\]
Since $A_{2n} = q^{-1}(q-p)$, we immediately get:
\[ 
A_{2n} + (BD)_{2n} = q^{-1}(q-p) + q^{-1}(p-q) = 0. 
\]
This completes the proof that $\mathsf{M}^2 = I$.

Furthermore, since $ \mathcal{M}_K^* $ is the formal adjoint of $ \mathcal{M}_K $ under the bilinear pairing $ \langle \cdot, \cdot \rangle_K $, it automatically inherits the involution property on the dual space $ \mathbb{R}^{(\mathbb{Z})} $, yielding $ (\mathcal{M}_K^*)^2 = id_{\mathbb{R}^{(\mathbb{Z})}} $.
\end{proof}

\section{The Trace Formula for the Reflected Random Walk}\label{sec:5}

In this section, we construct an exact spatial trace formula for the Markov transition matrix of the reflected random walk. 
By expanding the transition probabilities over the discrete reflection group $\mathcal{G}_{0,N+1}$,
the group generated by $ \mathcal{M}_0$ and $ \mathcal{M}_{N+1} $, 
we obtain a product representation for the global zeta function. 
Furthermore, we demonstrate that the diffusive scaling limit of this
discrete formula naturally recovers the classical Poisson summation formula
in the symmetric case; the asymmetric case is taken up in Section
\ref{sec:8}, where the drift is tuned with the system size and produces an
effective mass rather than a diffusive limit.

\subsection{A Generalized Reflection Principle for the Reflected Random Walk}

The following is a generalization of the so-called reflection principle.
\begin{proposition}\label{TraceFormula-P}
(i) For any $ f \in  \mathbb{R}^{\mathbb{Z}} $
such that  $ f(y) = 0 $ for $ y \notin [1,N] $, 
\begin{align}\label{Tr1}
    \mathbb{E} [f(X^{(N)}_t) |X^{(N)}_0=x ]
    = \sum_{W \in \mathcal{G}_{0,N+1}} \mathbb{E} [ (W f)(X_t) | X_0 =x] 
\end{align}
for $ t \in \mathbb{Z}_+ $, and $ x \in \mathbb{Z} \cap [0,N+1] $.

(ii) For $ x, y \in \mathbb{Z} \cap [1,N]$ and $ t \in \mathbb{Z}_+ $, 
\begin{align}\label{TraceFormula00}
    \mathbb{P} (X^{(N)}_t =y \,|X^{(N)}_0=x) 
    = \sum_{W \in \mathcal{G}_{0,N+1}} W^* \mathbb{P} (X_t = \cdot \,\,|X_0=x) (y) 
\end{align}
\end{proposition}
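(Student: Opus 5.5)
The plan is to reduce \eqref{Tr1} to three properties of the group-averaged function $F:=\sum_{W\in\mathcal{G}_{0,N+1}}Wf$, and then to obtain (ii) from (i) by duality. Write $Q$ for the free transition operator on functions, $(Qg)(x)=\mathbb{E}[g(X_1)\,|\,X_0=x]$, and $Q^{(N)}$ for the reflected one on $\mathbb{R}^{P_N}$. Then
$$\mathbb{E}[(Wf)(X_t)\,|\,X_0=x]=(Q^tWf)(x),\qquad \mathbb{E}[f(X^{(N)}_t)\,|\,X^{(N)}_0=x]=((Q^{(N)})^t f)(x).$$
So (i) is equivalent to
$$\Big(Q^t\sum_W Wf\Big)\Big|_{[0,N+1]}=(Q^{(N)})^t\big(f|_{[0,N+1]}\big).$$

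First I would settle the bookkeeping: well-definedness and the value of $F$ on $[0,N+1]$.
\begin{itemize}
\item Local finiteness. For fixed $t$ and $x$, only finitely many $W$ have $Wf\neq0$ on $[x-t,x+t]$. The words of $\mathcal{G}_{0,N+1}$ are alternating products of $\mathcal{M}_0$ and $\mathcal{M}_{N+1}$, and their supports should move out by roughly $N+1$ per letter. I would check this from the block form \eqref{Def-Block-M}: $\mathcal{M}_0$ sends functions supported in $[1,\infty)$ to functions supported in $(-\infty,-1]$ and vice versa, since the tails point away from the mirror. By induction on word length, a word of length $\ell$ applied to $f$ is then supported outside $[-(\ell-1)(N+1),\,\ell(N+1)]$, up to orientation.
\item Restriction to $[0,N+1]$. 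The same support computation shows $F|_{[0,N+1]}=f|_{[0,N+1]}$: for $y\ge1$, row $y$ of $\mathsf{M}$ only sees $z\le0$, where $f$ vanishes, and row $0$ only sees $f(0)=0$. Hence every nonidentity word vanishes on $[0,N+1]$.
\end{itemize}

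Second, since left multiplication by $\mathcal{M}_0$ or by $\mathcal{M}_{N+1}$ permutes $\mathcal{G}_{0,N+1}$, $F$ is invariant under both generators. The key step is an intertwining relation
$$Q\mathcal{M}_K=\mathcal{M}_KQ,\qquad K\in\mathbb{Z}.$$
By translation invariance of $Q$ it suffices to treat $K=0$, that is, to verify $\mathsf{Q}\mathsf{M}=\mathsf{M}\mathsf{Q}$ entrywise for the tridiagonal matrix $\mathsf{Q}$ of the free walk. I would do this block by block as in the involution lemma: the rows through $0$ and $\pm1$ by hand, and the interior rows by a geometric-series cancellation with $\alpha=p/q$ and $1-\alpha^2=(q-p)/q^2$, just as in the proof of $BC=I$. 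This is the step I expect to be the main obstacle, because the tails of $\mathsf{M}$ must be exactly compatible with the drift, and it is precisely where the non-point-reflection form of $\mathsf{M}$ is forced. Granting it, $Q^tF$ is invariant under both generators for every $t$.

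Third, I would show that invariance under $\mathcal{M}_0$ and $\mathcal{M}_{N+1}$ encodes the boundary condition. Row $-1$ of $\mathsf{M}$ is the single entry $m_{-1,1}=1$ (check that no Case-5 entry survives), so an $\mathcal{M}_0$-invariant $g$ satisfies $g(-1)=g(1)$. Then $(Qg)(0)=g(1)$, which is the reflected step out of $0$; symmetrically, $(Qg)(N+1)=g(N)$. For $x\in[1,N]$, $(Qg)(x)$ only uses values on $[0,N+1]$. Hence for every invariant $g$,
$$(Qg)\big|_{[0,N+1]}=Q^{(N)}\big(g|_{[0,N+1]}\big).$$
Applying this with $g=Q^sF$, which is invariant by the second step, an induction on $t$ starting from $F|_{[0,N+1]}=f|_{[0,N+1]}$ gives \eqref{Tr1}.

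For (ii), take $f=\delta_y$ with $y\in[1,N]$. The right side of \eqref{Tr1} is $\sum_W\sum_z\mathbb{P}(X_t=z\,|\,X_0=x)(W\delta_y)(z)$. Transposing each $W$ through the pairing $\langle\cdot,\cdot\rangle$, with adjoints reversing the word order (a bijection of $\mathcal{G}_{0,N+1}$, since each generator is an involution), turns this into $\sum_W\big(W^*\mathbb{P}(X_t=\cdot\,|\,X_0=x)\big)(y)$. This is \eqref{TraceFormula00}, and the interchange of sums is justified by the local finiteness established in the first step.
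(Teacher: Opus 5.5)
\textbf{There is a genuine gap: your key step $Q\mathcal{M}_K=\mathcal{M}_KQ$ is false for $p\neq q$.} The rest of your architecture is sound, but the entrywise verification you plan cannot succeed.

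\emph{Why it fails.} An operator commuting with the involution $\mathcal{M}_0$ must preserve its $(-1)$-eigenspace. Row $0$ of $\mathsf{M}$ is $m_{0,z}=\delta_{0,z}$, so every $g$ with $\mathcal{M}_0g=-g$ has $g(0)=0$. Now take $g:=\delta_{-1}-\mathcal{M}_0\delta_{-1}$. Then $\mathcal{M}_0g=-g$, $g(-1)=1$ and $g(1)=-m_{1,-1}=-1$, yet $(Qg)(0)=p\,g(-1)+q\,g(1)=p-q\neq0$. Entrywise, the failure shows up at $(2,-1)$:
\begin{itemize}
\item $(\mathsf{Q}\mathsf{M})_{2,-1}=p\,m_{1,-1}+q\,m_{3,-1}=p+\frac{q-p}{q}$;
\item $(\mathsf{M}\mathsf{Q})_{2,-1}=p\,m_{2,0}+q\,m_{2,-2}=\frac{p(q-p)}{q}+p$.
\end{itemize}
These differ by $q-p$. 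The generalized reflection is compatible with the drift only on its fixed space, not on its $(-1)$-eigenspace.

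\emph{What your argument actually needs.} You only use that $Q^sF$ stays invariant under both generators. For that, the weaker statement $Q(V_{\mathrm{inv}})\subseteq V_{\mathrm{inv}}$ suffices, where $V_{\mathrm{inv}}=\{g:\mathcal{M}_0g=g\}$, together with its translate for $\mathcal{M}_{N+1}$. This weaker statement is true, and it is what the paper proves. Its verification differs from yours in two ways.
\begin{itemize}
\item Only positive rows need checking. By the block identities $CB=I$ and $CA+D=0$ from the involution lemma, a function $h$ lies in $V_{\mathrm{inv}}$ as soon as $h(x)=\sum_{z\le 0}m_{x,z}h(z)$ for all $x>0$.
\item You must use the constraint $g(1)=g(-1)$ on the input, which gives $(Qg)(0)=g(-1)$. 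Matching the coefficient of $g(-1)$ then requires $p\,m_{x-1,-1}+q\,m_{x+1,-1}=m_{x,0}+q\,m_{x,-2}$, with coefficient $1$ rather than $p$ in front of $m_{x,0}$.
\end{itemize}
The identities for the coefficients of $g(z)$ with $z\le -2$ and with $z=0$ do coincide with your commutation identities, the latter because $m_{x,1}=0$ for $x>0$.

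\emph{With that lemma substituted, the rest of your plan goes through.} This covers the single-entry rows $\pm1$ turning $Q$ into $Q^{(N)}$ on $[0,N+1]$, the induction in $t$, local finiteness, and duality for (ii). Your route is then somewhat tidier than the paper's. The paper handles interior starting points by a first-hitting-time decomposition with the strong Markov property, and invokes the invariance of $V_{\mathrm{inv}}$ only at $x\in\{0,N+1\}$.
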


\begin{proof}
(i) For a function $f: \mathbb{Z} \to \mathbb{R}$ supported on $[1, N]$, define the extended function $\tilde{f} : \mathbb{Z} \to \mathbb{R}$ by
\begin{align*}
    \tilde{f}(x) = \sum_{W \in \mathcal{G}_{0,N+1}} (W f)(x).
\end{align*}

First, we show that this sum is well-defined and evaluates to a finite number of non-zero terms for any $x \in \mathbb{Z}$. 
From the definition of $\mathsf{M}$, the matrix elements $m_{y,z}$ govern the reflection $\mathcal{M}_0$. For $f$ supported on $[1,N]$, applying $\mathcal{M}_0$ yields $\mathcal{M}_0 f(y) = \sum_{z=1}^N m_{y,z} f(z)$. The coefficients $m_{y,z}$ for $z \ge 1$ are non-zero only when $y \le -1$ (since $m_{0,z} = 0$ for $z \neq 0$). Thus, $\mathrm{supp}(\mathcal{M}_0 f) \subseteq (-\infty, -1]$. 
Symmetrically, $\mathrm{supp}(\mathcal{M}_{N+1} f) \subseteq [N+2, \infty)$. 
Applying alternating words $W \in \mathcal{G}_{0,N+1}$ of length $\ge 2$ strictly pushes the support further away from the interval $[0, N+1]$. Therefore $\tilde{f}(x)$ is a well-defined finite sum.

Since the group $\mathcal{G}_{0,N+1}$ is generated by the involutions $\mathcal{M}_0$ and $\mathcal{M}_{N+1}$, the function $\tilde{f}$ is invariant under both reflections:
\begin{align*}
    \mathcal{M}_0 \tilde{f} = \tilde{f} \quad \text{and} \quad \mathcal{M}_{N+1} \tilde{f} = \tilde{f}.
\end{align*}

Furthermore, since $(Wf)(x) = 0$ for all $W \neq id$ when $x \in [1, N]$, we have $\tilde{f}(x) = f(x)$ for $x \in [1, N]$. At the boundaries $x \in \{0, N+1\}$, we have $f(x) = 0$, and as verified above, $(Wf)(x) = 0$ for all $W$. Thus, $\tilde{f}(x) = f(x) = 0$ holds on $[0, N+1]$.

We proceed by induction on $t$ to prove \eqref{Tr1}. 
For $t=0$, we have $$\mathbb{E}[f(X^{(N)}_0)|X^{(N)}_0=x] = f(x). $$
Since $\tilde{f}(x) = f(x)$ for $x \in [0, N+1]$, the base case holds.

Suppose the claim holds for all $k < t$ and all $x \in [0, N+1]$.
For the interior points $x \in [1, N]$, let $\tau = \inf\{ s > 0 : X_s \in \{0, N+1\} \}$
be the first hitting time of the boundary for the free walk $X_s$. Since the reflected walk $X^{(N)}_s$ coincides with the free walk up until it hits a boundary, $\tau$ is also the first boundary hitting time for $X^{(N)}_s$. 
Moreover, since $X_s \in [1, N]$ for $ s < \tau $, we have $f(X_t) = \tilde{f}(X_t)$ on $\{\tau > t\}$. 
Thus, 
\begin{align}\label{fstdc}
    \mathbb{E}[f(X^{(N)}_t) | X_0=x] = \mathbb{E}[\tilde{f}(X_t) 1_{\{\tau > t\}} | X_0=x] + \mathbb{E}[f(X^{(N)}_t) 1_{\{\tau \le t\}} | X_0=x].
\end{align}
Furthermore, we have 
\begin{align}\label{summ}
\begin{aligned}
    & \mathbb{E}[f(X^{(N)}_t) 1_{\{\tau \le t\}} | X_0=x] \\
    &= \sum_{s=1}^t \sum_{y \in \{0, N+1\}} \mathbb{P}(\tau=s, X_s=y | X_0=x) \, \mathbb{E}[f(X^{(N)}_t) | \tau=s, X_s^{(N)}=y].
    \end{aligned}
\end{align}
By the Markov property of $X^{(N)}$, 
\begin{align*}
    \mathbb{E}[f(X^{(N)}_t) | \tau=s, X_s^{(N)}=y] = \mathbb{E}[f(X^{(N)}_{t-s}) | X^{(N)}_0=y].
\end{align*}
Since $t-s < t$, we can apply the induction hypothesis:
\begin{align*}
    \mathbb{E}[f(X^{(N)}_{t-s}) | X^{(N)}_0=y] = \mathbb{E}[\tilde{f}(X_{t-s}) | X_0=y].
\end{align*}
Substituting this back into the summation of \eqref{summ} yields:
\begin{align}\label{scddc}
\begin{aligned}
    & \mathbb{E}[f(X^{(N)}_t) 1_{\{\tau \le t\}} | X_0=x] \\
    & =\sum_{s=1}^t \sum_{y \in \{0, N+1\}} \mathbb{P}(\tau=s, X_s=y | X_0=x) \, \mathbb{E}[\tilde{f}(X_{t-s}) | X_0=y] \\
    &= \mathbb{E}[\tilde{f}(X_t) 1_{\{\tau \le t\}} | X_0=x],
    \end{aligned}
\end{align}
where the second equality is guaranteed directly by the strong Markov property of the free walk $X_t$.
The equations \eqref{fstdc} and \eqref{scddc} 
lead to \eqref{Tr1}. 

We also need to show \eqref{Tr1}
for $ x =0 $ and $ x =N+1 $. Let $x=0 $.
Then, 
\begin{align*}
    \begin{aligned}
        \mathbb{E}[f(X^{(N)}_t) \mid X_0^{(N)}=0] = \mathbb{E}[f(X^{(N)}_{t-1}) \mid X_0^{(N)}=1] = (P^{t-1}\tilde{f})(1)
    \end{aligned}
\end{align*}
by the induction hypothesis, where 
$ P $ is the operator on $ \mathbb{R}^\mathbb{Z} $ defined by
$(Pf)(x) = p f(x-1) + q f(x+1)$.
On the free walk side, 
\begin{align*}
    \begin{aligned}
        \mathbb{E}[\tilde{f}(X_t) \mid X_0=0] = q(P^{t-1}\tilde{f})(1) + p(P^{t-1}\tilde{f})(-1).
    \end{aligned}
\end{align*}
So it remains to prove 
\begin{align}\label{PP}
(P^{t-1}\tilde{f})(-1) = (P^{t-1}\tilde{f})(1).
\end{align}

Let $V_{\mathrm{inv}} = \{ f \in \mathbb{R}^{\mathbb{Z}} \mid \mathcal{M}_0 f = f \}$. 
Our goal is to show that $P(V_{\mathrm{inv}}) \subseteq V_{\mathrm{inv}}$
since for any $f \in V_{\mathrm{inv}}$, 
\begin{equation*}
    f(1) = (\mathcal{M}_0 f)(1) = \sum_{z \in \mathbb{Z}} m_{1,z} f(z) = f(-1).
\end{equation*}
To prove that $Pf \in V_{\mathrm{inv}}$, we must verify that $(\mathcal{M}_0 (Pf))(x) = (Pf)(x)$ for all $x \in \mathbb{Z}$. 
Since the matrix $\mathsf{M}$ defining $\mathcal{M}_0$ is an involution ($\mathcal{M}_0^2 = id$) that maps positive coordinates to negative coordinates and vice versa (see \eqref{Def-Block-M}), verifying this equality for $x > 0$ is sufficient to guarantee it holds for all $x \in \mathbb{Z}$.
By definition, for $x > 0$, the left-hand side is:
\begin{equation*}
    (\mathcal{M}_0 (Pf))(x) = \sum_{z \in \mathbb{Z}} m_{x,z} (Pf)(z) = \sum_{z \in \mathbb{Z}} m_{x,z} (p f(z-1) + q f(z+1)).
\end{equation*}
The right-hand side is:
\begin{equation*}
    (Pf)(x) = p f(x-1) + q f(x+1) = p \sum_{z \in \mathbb{Z}} m_{x-1,z} f(z) + q \sum_{z \in \mathbb{Z}} m_{x+1,z} f(z).
\end{equation*}

By matching the coefficients of $f(z)$ for $z \le 0$ and applying the property $f(1) = f(-1)$, the condition $(\mathcal{M}_0 (Pf))(x) = (Pf)(x)$ reduces to three algebraic identities for the matrix elements $m_{i,j}$:
\begin{enumerate}
    \item For $z \le -2$: $p \, m_{x-1, z} + q \, m_{x+1, z} = p \, m_{x, z+1} + q \, m_{x, z-1}$.
    \item For $z = -1$: $p \, m_{x-1, -1} + q \, m_{x+1, -1} = m_{x, 0} + q \, m_{x, -2}$. The shift in the coefficient of $m_{x,0}$ to $1$ occurs because $(Pf)(0) = p f(-1) + q f(1) = (p+q)f(-1) = f(-1)$.
    \item For $z = 0$: $p \, m_{x-1, 0} + q \, m_{x+1, 0} = q \, m_{x, -1}$.
\end{enumerate}

To prove the three algebraic identities for the matrix elements $m_{i,j}$ given $i > 0$, we first note that $m_{i,j}$ is non-zero only if $i=-j$ or $i+j$ is an even positive integer. 
Thus, both sides of the identities are trivially zero unless $x+z$ is odd. 
We assume $x+z$ is odd throughout.
For $i > 0$ and $j \le -1 $, we define the constant part for $i+j > 0$ as $ \mathfrak{m}(j) := p^{-j-1} q^{j-1} (q-p)$. 
\begin{enumerate}
    \item For $z \le -2$: 
    We prove $p \, m_{x-1, z} + q \, m_{x+1, z} = p \, m_{x, z+1} + q \, m_{x, z-1}$. 
    \begin{itemize}
        \item {Case 1a ($x-1+z > 0$)}: All index sums are strictly positive. 
        The left-hand side (LHS) is $p \mathfrak{m} (z) + q \mathfrak{m}(z) = (p+q) \mathfrak{m}(z) = \mathfrak{m}(z)$. 
        The right-hand side (RHS) expands as:
        \begin{align*}
            p \mathfrak{m} (z+1) + q \mathfrak{m}(z-1) &= p (p^{-z-2} q^z (q-p)) + q (p^{-z} q^{z-2} (q-p)) \\
            &= p^{-z-1} q^z (q-p) + p^{-z} q^{z-1} (q-p) \\
            &= p^{-z-1} q^{z-1} (q-p) [q + p] = \mathfrak{m} (z).
        \end{align*}
        LHS = RHS holds.
        \item {Case 1b ($x-1+z = 0$, meaning $x = 1-z$)}: 
        The LHS involves the anti-diagonal element $m_{-z, z} = (p/q)^{-z-1}$:
        \begin{align*}
            \text{LHS} &= p \, m_{-z, z} + q \, m_{-z+2, z} = p \left(\frac{p}{q}\right)^{-z-1} + q \mathfrak{m} (z) \\
            &= p^{-z} q^{z+1} + p^{-z-1} q^z (q-p) = p^{-z-1} q^z (pq + q - p).
        \end{align*}
        Substituting $q = 1-p$, this yields $p^{-z-1} q^z (1-p-p^2)$.
        The RHS expands as:
        \begin{align*}
            \text{RHS} &= p \, m_{1-z, z+1} + q \, m_{1-z, z-1} = p \mathfrak{m} (z+1) + q \left(\frac{p}{q}\right)^{-z} \\
            &= p^{-z-1} q^z (q-p) + p^{-z} q^{z+1} = p^{-z-1} q^z (q-p + pq).
        \end{align*}
        This also simplifies to $p^{-z-1} q^z (1-p-p^2)$. LHS = RHS holds.
    \end{itemize}
    \item {For $z = -1$}: 
    We prove $p \, m_{x-1, -1} + q \, m_{x+1, -1} = m_{x, 0} + q \, m_{x, -2}$. Since $x+z$ is odd, $x$ must be even ($x \ge 2$).
    \begin{itemize}
        \item {Case 2a ($x \ge 4$)}: 
        $\text{LHS} = p \mathfrak{m} (-1) + q \mathfrak{m}(-1) = \mathfrak{m}(-1) = p^0 q^{-2} (q-p) = \frac{q-p}{q^2}$.
        Using $m_{x,0} = \frac{q-p}{q}$, the RHS is:
        \begin{align*}
            \text{RHS} &= \frac{q-p}{q} + q \mathfrak{m}(-2) = \frac{q-p}{q} + q \left( p^1 q^{-3} (q-p) \right) \\
            &= \frac{q(q-p) + p(q-p)}{q^2} = \frac{(p+q)(q-p)}{q^2} = \frac{q-p}{q^2}.
        \end{align*}
        LHS = RHS holds.
        \item {Case 2b ($x = 2$)}: 
        Using $m_{1,-1} = 1$, the LHS is:
        \begin{align*}
            \text{LHS} &= p(1) + q \mathfrak{m}(-1) = p + \frac{q-p}{q} = \frac{pq + q - p}{q} = \frac{1-p-p^2}{q}.
        \end{align*}
        Using $m_{2,-2} = p/q$, the RHS is:
        \begin{align*}
            \text{RHS} &= m_{2,0} + q \, m_{2,-2} = \frac{q-p}{q} + q \left(\frac{p}{q}\right) = \frac{q-p+pq}{q} = \frac{1-p-p^2}{q}.
        \end{align*}
        LHS = RHS holds.
    \end{itemize}
    \item {For $z = 0$}: 
    We prove $p \, m_{x-1, 0} + q \, m_{x+1, 0} = q \, m_{x, -1}$. Since $x+z$ is odd, $x$ must be odd ($x \ge 1$).
    \begin{itemize}
        \item {Case 3a ($x \ge 3$)}: 
        $\text{LHS} = p \left(\frac{q-p}{q}\right) + q \left(\frac{q-p}{q}\right) = (p+q)\frac{q-p}{q} = \frac{q-p}{q}$.
        $\text{RHS} = q \mathfrak{m}(-1) = q \left(\frac{q-p}{q^2}\right) = \frac{q-p}{q}$. LHS = RHS holds.
        \item {Case 3b ($x = 1$)}: 
        Using $m_{0,0} = 1$, the LHS is:
        \begin{align*}
            \text{LHS} &= p(1) + q \, m_{2,0} = p + q \left(\frac{q-p}{q}\right) = p + q - p = q.
        \end{align*}
        The RHS is $q \, m_{1,-1} = q(1) = q$. LHS = RHS holds.
    \end{itemize}
\end{enumerate}
Thus we have shown that 
$(\mathcal{M}_0 (Pf))(x) = (Pf)(x)$, which establishes $Pf \in V_{\mathrm{inv}}$, and therefore \eqref{PP}.

(ii) Let $g_x(z) = \mathbb{P}(X_t = z | X_0=x) \in \mathbb{R}^{(\mathbb{Z})}$. For a fixed state $y \in [1, N]$, let $\delta_y \in \mathbb{R}^{\mathbb{Z}}$ be the Dirac delta function. By definition, $\mathbb{P}(X^{(N)}_t = y | X^{(N)}_0=x) = \mathbb{E}[\delta_y(X^{(N)}_t) | X^{(N)}_0=x]$. 
Since $y \in [1, N]$, $\delta_y$ satisfies the support condition. Substituting $f = \delta_y$ into the established result of (i) yields:
\begin{align*}
  &  \mathbb{P}(X^{(N)}_t = y | X^{(N)}_0=x) 
    = \sum_{W \in \mathcal{G}_{0,N+1}} \mathbb{E}[(W \delta_y)(X_t) | X_0=x] \\
    &= \sum_{W \in \mathcal{G}_{0,N+1}} \sum_{z \in \mathbb{Z}} (W \delta_y)(z) \mathbb{P}(X_t = z | X_0=x) = \sum_{W \in \mathcal{G}_{0,N+1}} \langle W \delta_y, g_x \rangle_0.
\end{align*}
Applying the adjoint property $\langle W \delta_y, g_x \rangle_0 = \langle \delta_y, W^* g_x \rangle_0$, we have:
\begin{align*}
    \sum_{W \in \mathcal{G}_{0,N+1}} \langle W \delta_y, g_x \rangle_0 
    &= \sum_{W \in \mathcal{G}_{0,N+1}} \langle \delta_y, W^* g_x \rangle_0 = \sum_{W \in \mathcal{G}_{0,N+1}} (W^* g_x)(y).
\end{align*}
Substituting $g_x(z) = \mathbb{P}(X_t = z | X_0=x)$ back into the final expression completes the proof:
\begin{align*}
    \mathbb{P}(X^{(N)}_t = y | X^{(N)}_0=x) = \sum_{W \in \mathcal{G}_{0,N+1}} W^* \mathbb{P} (X_t = \cdot \,\,|X_0=x) (y).
\end{align*}
\end{proof}

\begin{rem}\label{p=q=1/2}
When $p=q=1/2$ the operators $\mathcal{M}_0$ and $\mathcal{M}_{N+1}$ are the
point reflections $x\mapsto-x$ and $x\mapsto 2(N+1)-x$, so that
$\mathcal{G}_{0,N+1}$ acts on $\mathbb{Z}$ by
$x\mapsto \pm x+2k(N+1)$ $(k\in\mathbb{Z})$, and the reflected walk
$X^{(N)}$ is the image of the free walk $X$ under the folding map
$\mathbb{Z}\to\mathbb{Z}\cap[0,N+1]$ induced by this action.  Consequently
\begin{align}\label{TraceFormula00-boundary}
\begin{aligned}
      \mathbb{P}(X^{(N)}_t=y\mid X^{(N)}_0=x)
    &=\sum_{y'\in\mathcal{O}(y)}\mathbb{P}(X_t=y'\mid X_0=x), \\
&\text{where} \,\, \mathcal{O}(y):=\{\,\pm y+2k(N+1):k\in\mathbb{Z}\,\},
\end{aligned}
\end{align}
for all $x,y\in\mathbb{Z}\cap[0,N+1]$.  For $y\in[1,N]$ the map
$W\mapsto Wy$ is a bijection of $\mathcal{G}_{0,N+1}$ onto $\mathcal{O}(y)$
and \eqref{TraceFormula00-boundary} is \eqref{TraceFormula00}; for
$y\in\{0,N+1\}$ the stabilizer of $y$ has order two, so that the
group sum in \eqref{TraceFormula00} is twice the left-hand side of
\eqref{TraceFormula00-boundary}.
\end{rem}

\subsection{The Trace Formula for the Reflected Random Walk}

Recall from Section \ref{sec:2.2} that $\mathbf{P}^{(N)}$ denotes the transpose of the transition matrix for the reflected random walk $X^{(N)}_t$ on the finite path graph. We are now ready to establish the exact global-local spectral correspondence.
\begin{theorem}\label{DiscreteTrace}
For sufficiently small $u \in \mathbb{C}$, the zeta function of the reflected random walk admits the following absolutely convergent product expansion:
\begin{align}\label{DiscreteTraceFormula}
    \det (I_{N+2} - u \mathbf{P}^{(N)})^{-1} 
    = \zeta_{\partial X^{(N)}}(u) \prod_{W \in \mathcal{G}_{0,N+1}} \zeta_{W,X^{(N)}} (u).
\end{align}
Here, the boundary zeta function is defined by the return probabilities at the boundaries $x \in \{0, N+1\}$:
\begin{align*}
    \zeta_{\partial X^{(N)}} (u) := \exp \left( \sum_{n=1}^\infty \frac{u^n}{n} \sum_{x \in \{0, N+1\}} \mathbb{P}(X^{(N)}_n = x \mid X^{(N)}_0 = x) \right),
\end{align*}
and the local bulk zeta factor associated with each $W \in \mathcal{G}_{0,N+1}$ is defined by the spatial expansion over the interior $x \in [1, N]$:
\begin{align*}
\zeta_{W,X^{(N)}} (u) := \exp \left( \sum_{n=1}^\infty \frac{u^n}{n} \sum_{x=1}^{N} W^* \mathbb{P}(X_n = \cdot \mid X_0 = x) (x) \right).
\end{align*}
In particular, the contribution from the identity element $W = id$ is the $ N $-th power of the 
zeta of the free walk \eqref{FreeZeta}:
\begin{align*}
    \zeta_{id,X^{(N)}}(u) = \zeta_X(u)^{N}.
\end{align*}
\end{theorem}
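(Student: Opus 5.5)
The identity is a trace identity, so I would take logarithms. For $|u|$ small,
\[
-\mathrm{Log}\det(I_{N+2}-u\mathbf{P}^{(N)})=\sum_{n\ge1}\frac{u^n}{n}\mathrm{Tr}\big((\mathbf{P}^{(N)})^n\big).
\]
The trace splits into a boundary part and an interior part:
\[
\mathrm{Tr}\big((\mathbf{P}^{(N)})^n\big)=\sum_{x\in\{0,N+1\}}\mathbb{P}(X^{(N)}_n=x\mid X^{(N)}_0=x)+\sum_{x=1}^N\mathbb{P}(X^{(N)}_n=x\mid X^{(N)}_0=x).
\]
Here we use that the $(x,x)$ entry of $\mathbf{P}^{(N)}$ and of its transpose coincide. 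The boundary sum gives $\mathrm{Log}\,\zeta_{\partial X^{(N)}}(u)$ exactly. For each interior term I apply Proposition \ref{TraceFormula-P}(ii) with $y=x\in[1,N]$, which rewrites it as
\[
\sum_{W\in\mathcal{G}_{0,N+1}}W^*\mathbb{P}(X_n=\cdot\mid X_0=x)(x).
\]

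The product form then follows by exchanging $\sum_n$ with $\sum_W$. This gives $\mathrm{Log}$ of the right-hand side as $\mathrm{Log}\,\zeta_{\partial X^{(N)}}+\sum_W\mathrm{Log}\,\zeta_{W,X^{(N)}}$, and exponentiating yields \eqref{DiscreteTraceFormula}, the product being absolutely convergent.

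For fixed $n$ and $x$, only finitely many $W$ contribute. $\mathbb{P}(X_n=\cdot\mid X_0=x)$ is supported in $[x-n,x+n]$, and the support-pushing argument in the proof of Proposition \ref{TraceFormula-P}(i), applied to the adjoints, shows that words of length $\gg n/(N+1)$ give zero at $x$. So each inner sum is finite, but the interchange still needs absolute summability of the double series $\sum_n\sum_W \frac{|u|^n}{n}|c_{W,n}|$, where $c_{W,n}=\sum_{x=1}^N W^*\mathbb{P}(X_n=\cdot\mid X_0=x)(x)$.

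This absolute summability is the main obstacle (it is Lemma \ref{prop:absolute-summability}). The entries of $\mathsf{M}$ are not bounded by $1$ when $p\ne q$: they involve $(p/q)^{k-1}$ and $p^{-j-1}q^{j-1}$. The plan is to bound the entries of $W^*$ restricted to $[x-n,x+n]$ by $C^{n}$ for some $C=C(p,q)$, since only entries $m_{y,z}$ with $|y|,|z|\le n$ are involved and each is at most $\max(p/q,q/p)^{n}\cdot\mathrm{const}^n$. The number of contributing words is $O(n/(N+1)+1)$. Each $W^*$ is a composition of conjugated copies of $\mathsf{M}^\top$, and I would bound the accumulated product by $C'^{\,n}$, using that each factor acts on a window of size $O(n)$ and that the word length is $\le n$. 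Together with $\sum_z \mathbb{P}(X_n=z\mid X_0=x)=1$, this gives $|c_{W,n}|\le N\,C''^{\,n}$. Hence the double series converges absolutely for $|u|<1/C''$, which is the meaning of "sufficiently small $u$". If the crude bound on compositions fails, the fallback is to use the explicit translation and reflection structure of even and odd words, with their tails cancelling along even words, to control the entries more finely.

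Finally, the identity factor is immediate. For $W=id$ the coefficient is $\sum_{x=1}^N\mathbb{P}(X_n=x\mid X_0=x)=N\,\mathbb{P}(X_n=0\mid X_0=0)$ by translation invariance of the free walk. Comparing with \eqref{FreeZeta} gives $\zeta_{id,X^{(N)}}(u)=\zeta_X(u)^N$.
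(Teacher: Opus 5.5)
Your reduction is the same as the paper's: trace expansion, boundary/bulk split, Proposition \ref{TraceFormula-P}(ii) on the diagonal, exchange of $\sum_n$ and $\sum_W$, and translation invariance for the identity factor. The gap is in the step you yourself single out, the absolute summability. Your primary plan there does not work.

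You propose to bound each generator's entries on the relevant window by $\rho^{O(n)}$, with $\rho=\max(\alpha,\alpha^{-1})$, and multiply along a word. A word of length $\ell(W)\approx n/(N+1)$ then gives only $\rho^{O(n^{2}/(N+1))}$. Summing over intermediate points in windows of size $O(n)$ adds a further factor $O(n)^{\ell(W)}$. Both are superexponential, and $\sum_n |u|^n e^{cn^2}$ diverges for every $u\neq0$, so the bound gives no radius of convergence.

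The true coefficients are exponential only because consecutive factors telescope. For instance, along $\mathcal{T}_1$ applied to $\delta_x$ the anti-diagonal weights are $m_{-x,x}=\alpha^{-(x-1)}$ and then $m_{x+N+1,-(x+N+1)}=\alpha^{x+N}$. Each is exponentially large or small in the position, but their product is $\alpha^{N+1}$. Any entrywise bound on the generators discards exactly this cancellation.

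The missing idea is a norm in which each generator is bounded, with a constant independent of $n$. The paper uses $\|f\|_{1,\omega}=\sum_j|f(j)|\omega(j)$ with $\omega(j)=\alpha^{j/2}\sigma^{|j|}$ and $\sigma^2>\rho$. The factor $\alpha^{j/2}$ is the symmetrizing conjugation: it turns the anti-diagonal entries into $\alpha^{\pm1}$ and so absorbs the telescoping. The factor $\sigma^{|j|}$ controls the geometric tails of size $c_0\alpha^{\pm s}$.

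A row-sum estimate then gives $\|\mathcal{M}_K^*f\|_{1,\omega}\le B\|f\|_{1,\omega}$ for $K\in\{0,N+1\}$. Writing $\pi_{n,x}=\mathbb{P}(X_n=\cdot\mid X_0=x)$, this yields $|(W^*\pi_{n,x})(x)|\le B^{\ell(W)}\sigma^n\rho^{n/2}$. Together with $\ell(W)\le n/(N+1)+1$ and at most $2n/(N+1)+3$ contributing words, this gives the bound $CR^n$.

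Your fallback would also work. After conjugation by $\mathcal{D}_\infty$ the even words act as exact translations (Lemma \ref{lem:translations}), so their unsymmetrized diagonal contributions are $\alpha^{k(N+1)}\pi_{n,x}(x+2k(N+1))$. Each odd word carries a single geometric tail. But this route requires first proving the generating-function cancellation of Lemma \ref{lem:images}, which your proposal only gestures at. As written, the key estimate is not established.
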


\begin{proof}
By the definition of the determinant, we have the trace expansion:
\begin{align}\label{DecompTrace}
    - \mathrm{Log} \det (I_{N+2} - u \mathbf{P}^{(N)})
    = \sum_{n=1}^\infty \frac{u^n}{n} \mathrm{Tr} ((\mathbf{P}^{(N)})^n).
\end{align}
The trace of the $n$-th power of the transition matrix is the sum of the return probabilities over the state space $P_N = [0, N+1]$. 
We split this spatial sum into the boundary contributions and the bulk contributions:
\begin{align}\label{BulkTrace}
    \mathrm{Tr} ((\mathbf{P}^{(N)})^n) 
    &= \sum_{x \in \{0, N+1\}} \mathbb{P} (X^{(N)}_n =x \mid X^{(N)}_0 =x) 
    + \sum_{x=1}^{N} \mathbb{P} (X^{(N)}_n =x \mid X^{(N)}_0 =x).
\end{align}
Multiplying by $u^n / n$ and summing over $n \ge 1$, the first term (the boundary sum) forms the logarithmic exponent of the boundary zeta function $\zeta_{\partial X^{(N)}}(u)$. 

For the second term (the bulk sum), we apply the generalized reflection principle established in Proposition \ref{TraceFormula-P} (ii), which holds specifically for $x \in [1, N]$:
\begin{align*}
\begin{aligned}
    \sum_{x=1}^{N} \mathbb{P} (X^{(N)}_n =x \mid X^{(N)}_0 =x) 
    &= \sum_{x=1}^{N} \sum_{W \in \mathcal{G}_{0,N+1}} \left[ W^* \mathbb{P} (X_n = \cdot \mid X_0=x) \right] (x) \\
    &= \sum_{W \in \mathcal{G}_{0,N+1}} \sum_{x=1}^{N} \left[ W^* \mathbb{P} (X_n = \cdot \mid X_0=x) \right] (x). 
\end{aligned}
\end{align*}
Multiplying this by $u^n / n$, summing over $n \geq 1$, and taking the exponential on both sides alongside the boundary term yields the formal product representation \eqref{DiscreteTraceFormula}. 
The last part of the theorem is straightforward from the translation invariance. 

The absolute convergence follows from Lemma \ref{prop:absolute-summability} below. 
\end{proof}

\begin{lemma}\label{prop:absolute-summability}
Fix $N\geq2$. There exist constants
$C>0$ and $R>0$ such that, for every $n\geq1$,
\begin{equation}\label{eq:main-bound}
  \sum_{W\in \mathcal{G}_{0,N+1}}|\sum_{x=1}^{N} \left[ W^* \mathbb{P} (X_n = \cdot \mid X_0=x) \right] (x) |
  \leq C R^n.
\end{equation}
Consequently,
\begin{equation*}
  \sum_{n=1}^{\infty}\frac{|u|^n}{n}
  \sum_{W\in \mathcal{G}_{0,N+1}}|\sum_{x=1}^{N} \left[ W^* \mathbb{P} (X_n = \cdot \mid X_0=x) \right] (x)|<\infty
  \,\,\text{whenever } |u|<R^{-1}.
\end{equation*}
In particular, on this disk the order of summation in $n$ and $W$ may be
interchanged, and the product of the local zeta factors converges absolutely.
\end{lemma}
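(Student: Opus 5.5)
The approach is to bound each term through the support properties of the words together with an entrywise bound on the matrix $\mathsf{M}$. The key observation is that for fixed $n$ only finitely many $W\in\mathcal{G}_{0,N+1}$ contribute. The free kernel $g_x=\mathbb{P}(X_n=\cdot\mid X_0=x)$ is supported in $[x-n,x+n]$. Pairing $W^*g_x$ with $\delta_x$ gives $\langle \delta_x,W^*g_x\rangle=\langle W\delta_x,g_x\rangle$. In the proof of Proposition \ref{TraceFormula-P} we saw that a reduced word of length $\ell\ge1$ maps $\delta_x$, $x\in[1,N]$, to a function supported outside $[0,N+1]$, with support pushed further out as $\ell$ grows.

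\textbf{Step 1 (support).} I would make the pushing quantitative: for a reduced word of length $\ell$, $\mathrm{supp}(W\delta_x)$ lies at distance at least $c\,\ell (N+1)$ from $[1,N]$, up to an additive constant. Each reflection maps the half-line beyond one mirror into the half-line beyond the other mirror, at a distance at least the mirror separation. The tails of $\mathsf{M}$ only carry mass further from the mirror, since the tail entries satisfy $i+j=2k>0$, so they do not spoil this. Hence $\langle W\delta_x,g_x\rangle=0$ unless $\ell\lesssim n/(N+1)+1$. This leaves at most $2(Cn+1)$ words.

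\textbf{Step 2 (size).} For each contributing word I would bound $|\langle W\delta_x,g_x\rangle|\le \|W\delta_x\|_{\ell^\infty\text{ on }[x-n,x+n]}$, using $\sum_z g_x(z)=1$. The entries of $\mathsf{M}$ satisfy $|m_{i,j}|\le \rho^{|i|+|j|+1}$ with $\rho=\max(p/q,q/p,1/p,1/q)$, and only $O(|i|)$ entries are nonzero in each row. Restricted to coordinates in a window of radius $O(n+N)$, each application of $\mathcal{M}_0$ or $\mathcal{M}_{N+1}$ therefore multiplies the sup norm by at most $O(n+N)\rho^{O(n+N)}$. Only coordinates within distance $n$ of $x$ matter at the end, and intermediate supports stay within radius $O(n+N)$ by Step 1 read in reverse, since each reflection moves points by a bounded multiple of their distance to the mirror. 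Then a word of length $\ell\le Cn/(N+1)+1$ gives a bound of the form $(K(n+N))^{\ell}\rho^{K(n+N)\ell}$.

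\textbf{Main obstacle.} This naive estimate is only $e^{O(n^2/N)}$, not geometric. I would refine Step 2, and I expect this to be the main difficulty. The fix I would try first is to track the sum $\sum_z |W\delta_x(z)|\,w(z)$ with the weight $w(z)=\rho^{-|z|}$ rather than the sup norm. Using the explicit geometric form of the entries ($\alpha^{k}$ factors with $\alpha=p/q$), I would show that each reflection is bounded by a constant $K$ independent of $n$ in a suitably weighted $\ell^1$ norm. If the tails prove too lossy for this, the fallback is to exploit the exact cancellation along even words announced in Lemma \ref{lem:translations}, whose statement I would then prove directly here. That cancellation makes translation words act as genuine translations, and gives $|\langle W\delta_x,g_x\rangle|\le K$ uniformly for even words and $\le K^2$ for odd words. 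Pairing the weighted norm with $g_x$ costs at most $\rho^{|x|+n}$, because $g_x$ lives within distance $n$ of $x$.

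\textbf{Conclusion.} Combining, the sum over $W$ and $x$ is at most $N\cdot 2(Cn+1)\cdot K^{Cn+1}\rho^{n+N}\le C'R^n$. This proves \eqref{eq:main-bound}. The convergence statement then follows by comparison with $\sum_n (R|u|)^n/n$. Absolute convergence justifies the interchange of the sums over $n$ and $W$ (Fubini), and hence the absolute convergence of the product of the local zeta factors.
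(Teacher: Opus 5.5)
Your overall plan can be completed, but not through the step you put first. The skeleton matches the paper's: count the contributing words with the support rule, then bound each generator by an $n$-independent constant in a weighted $\ell^1$ norm. The weight you name, $w(z)=\rho^{-|z|}$, cannot work, however, and the obstruction is not the tails.

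Column $-r$ of $\mathsf{M}$ contains the anti-diagonal entry $m_{r,-r}=\alpha^{r-1}$, where $\alpha=p/q$. So for any weight with $w(z)=w(-z)$ one has $\|\mathcal{M}_0\delta_{-r}\|_{1,w}\ge\alpha^{r-1}\|\delta_{-r}\|_{1,w}$. For $\alpha>1$ this means $\mathcal{M}_0$ is not even bounded on that space (for $\alpha<1$ use $m_{-r,r}=\alpha^{1-r}$). Restricting to windows of radius $O(n+N)$ only brings back the $e^{O(n^2/N)}$ loss.

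In the exact matrix elements these factors telescope; for instance $\alpha^{1-x}\cdot\alpha^{N+x}=\alpha^{N+1}$ along $\mathcal{M}_{N+1}\mathcal{M}_0\delta_x$. The weight has to be built to see this: it must contain $\alpha^{\mp z/2}$, i.e.\ you conjugate by $\mathcal{D}_\infty$. After that the anti-diagonal entries become $\alpha^{\pm1}$ and the tails become $(\alpha-\alpha^{-1})\alpha^{\pm m}$. An extra factor $\sigma^{\mp|z|}$ with $\sigma^2>\max(\alpha,\alpha^{-1})$ then makes the tails summable.

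On your side, acting on $\delta_x$, the right weight is $w(z)=\alpha^{-z/2}\sigma^{-|z|}$. The paper works dually, on the adjoint side, where everything stays finitely supported; your $W\delta_x$ are not finitely supported, since the columns of $\mathsf{M}$ are infinite. It uses $\omega(j)=\alpha^{j/2}\sigma^{|j|}$, proves $\|\mathcal{M}_K^*f\|_{1,\omega}\le B_0\sigma^{2K}\|f\|_{1,\omega}$, and hence gets $|(W^*\pi_{n,x})(x)|\le B^{\ell(W)}\sigma^n\max(\alpha,\alpha^{-1})^{n/2}$.

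Your fallback, on the other hand, is correct and is a genuinely different route. Lemma \ref{lem:translations} is purely algebraic (it rests only on $W(s)V(s)=1$), so proving it here is not circular. Undoing the symmetrization, it says $\mathcal{T}_{\pm k}\delta_x=\alpha^{\pm k(N+1)}\delta_{x\pm 2k(N+1)}$ for $x\in[1,N]$ and $k\ge1$. Since $g_x(x+d)=\binom{n}{(n+d)/2}(pq)^{n/2}\alpha^{-d/2}$, every even word contributes at most $(4pq)^{n/2}\le1$.

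For the odd words, however, the bound is not a product of step bounds, because a single reflection is unbounded in any symmetric norm. You have to compute $\langle\mathcal{M}_0\mathcal{T}_k\delta_x,g_x\rangle$ from the explicit column of $\mathsf{M}$. The powers of $\alpha$ then cancel exactly, leaving $\alpha\binom{n}{j}(pq)^{n/2}+(\alpha-\alpha^{-1})\sum_{m\ge1}\alpha^{m}\binom{n}{j-m}(pq)^{n/2}$ with $j=n/2-x-k(N+1)$. The binomial theorem bounds this by $2\max(\alpha,\alpha^{-1})$, and the words $\mathcal{M}_{N+1}\mathcal{T}_{-k}$ are handled symmetrically.

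As for what each route buys: the paper's norm argument needs neither the cancellation nor any closed form for the words. It only yields a disc $|u|<R^{-1}$ whose radius stays away from $1$ when $p\neq q$. Your route uses the exact translation identity and in return gives per-word bounds uniform in $n$. The left side of \eqref{eq:main-bound} is then $O(n+N)$, and absolute convergence holds on all of $|u|<1$. Your final estimate with $K^{Cn+1}\rho^{n+N}$ belongs to the norm route; on the fallback it is replaced by a bound linear in $n$.
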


\begin{proof}
For $x\in\{1,\ldots,N\}$, define
\[
  \pi_{n,x}(y)
  :=\mathbb{P}(X_n=y\mid X_0=x),
  \qquad y\in\mathbb{Z}. 
\] 
Set $ \alpha := p/q $
and choose a positive number $ \sigma $ such that 
\begin{equation}\label{eq:sigma-choice}
  \sigma^2 >\rho := \max( \alpha, \alpha^{-1}).
\end{equation}
We will show that 
\begin{align}
  |(W^*\pi_{n,x})(x)|\leq B^{\ell(W)} \sigma^n \rho^{n/2}                
  \label{eq:pointwise-a-bound-2}
\end{align}
for a positive constant $ B $,
where $ \ell(W) $ is 
the number of generators occurring in the reduced expression of $ W $.
This together with an estimate that
\begin{align}\label{number-est}
    \# \{ W \in \mathcal{G}_{0,N+1} : (W^*\pi_{n,x})(x) \ne 0 \} \leq  \frac{2n} {N+1} +3
\end{align}
establishes \eqref{eq:main-bound}.

To prove \eqref{eq:pointwise-a-bound-2}, 
we introduce a weighted $ l^1$ norm 
\[
\|f\|_{1,\omega}:=\sum_{j\in\mathbb{Z}}|f(j)|\omega(j), \quad f \in l_0 (\mathbb{Z})
\]
where 
\begin{equation}\label{eq:weight}
\omega(j):=\alpha^{j/2}\sigma^{|j|},
  \qquad j\in\mathbb{Z}.
\end{equation}

Recall $ \mathsf{M} =(m_{i,j})_{i,j\in\mathbb{Z}}$
is given by 
\eqref{matrixM},
which implies the following row bounds.
For every $r\geq1$,
\begin{align}
  |m_{r,-r}|&=\alpha^{r-1},
  &
  |m_{r,-s}|&\leq c_0\alpha^s
  &&(0\leq s\leq r-2),                                      \label{eq:row-plus}\\
  |m_{-r,r}|&=\alpha^{-r+1},
  &
  |m_{-r,s}|&\leq c_0\alpha^{-s}
  &&(0\leq s\leq r-2),                         \label{eq:row-minus}
\end{align}
where, the entries are actually zero if $ r +s $ is odd, and
$  c_0 := |p-q|/(pq) $.
Indeed, for $s\geq1$ the two off-anti-diagonal expressions in
\eqref{eq:row-plus}
and \eqref{eq:row-minus} 
are equal respectively to
$c_0\alpha^s$ and $c_0\alpha^{-s}$, while the estimates for $s=0$ follow
because $p,q\leq1$.

For $r\geq1$, using \eqref{eq:weight} and \eqref{eq:row-plus}, we obtain
\begin{align*}
 &\frac{1}{\omega(r)}\sum_{j\in\mathbb{Z}}|m_{r,j}|\omega(j)
 \leq
 \alpha^{r-1}\frac{\omega(-r)}{\omega(r)}
 +c_0\sum_{s=0}^{r-2}\alpha^s\frac{\omega(-s)}{\omega(r)}\\
 &=\alpha^{-1}
 +c_0\sum_{s=0}^{r-2}(\sigma\sqrt{\alpha})^{s-r}\leq
 \alpha^{-1}+\frac{c_0}{\sigma\sqrt{\alpha}-1}.
\end{align*}
Similarly, \eqref{eq:row-minus} gives
\begin{align*}
 \frac{1}{\omega(-r)}\sum_{j\in\mathbb{Z}}|m_{-r,j}|\omega(j)
 &\leq
 \alpha+c_0\sum_{s=0}^{r-2}
       \left(\frac{\sigma}{\sqrt{\alpha}}\right)^{s-r}\leq
 \alpha+\frac{c_0}{\sigma/\sqrt{\alpha}-1}.
\end{align*}
The denominators are positive by \eqref{eq:sigma-choice}.  Since the zeroth
row consists only of $m_{0,0}=1$, it follows that
\begin{equation*}
  \| \mathcal{M}_0^*f\|_{1,\omega}
  \leq B_0\|f\|_{1,\omega},
\end{equation*}
where
\begin{equation*}
 B_0:=\max\left\{
  1,
  \alpha^{-1}+\frac{c_0}{\sigma\sqrt{\alpha}-1},
  \alpha+\frac{c_0}{\sigma/\sqrt{\alpha}-1}
 \right\}.
\end{equation*}

For a reflection centered at $K\in\mathbb{Z}$, introduce the translated weight
\[
  \omega_K(j):=\alpha^{(j-K)/2}\sigma^{|j-K|}.
\]
The same calculation gives
\begin{equation}\label{eq:shifted-reflection-bound}
  \| \mathcal{M}_K^*f\|_{1,\omega_K}
  \leq B_0\|f\|_{1,\omega_K}.
\end{equation}
On the other hand, the triangle inequality yields
\begin{equation}\label{eq:weight-equivalence}
  \alpha^{-K/2}\sigma^{-K}\omega(j)
  \leq \omega_K(j)
  \leq \alpha^{-K/2}\sigma^K\omega(j),
  \qquad j\in\mathbb{Z}.
\end{equation}
Combining \eqref{eq:shifted-reflection-bound} and
\eqref{eq:weight-equivalence}, we find
\[
  \| \mathcal{M}_K^*f\|_{1,\omega}
  \leq B_0\sigma^{2K}\|f\|_{1,\omega}.
\]
Consequently, with $  B:=B_0\sigma^{2(N+1)}\geq1 $, 
we have, for $K\in\{0,N+1\}$,
\begin{equation}\label{eq:generator-bound}
  \|\mathcal{M}_K^*f\|_{1,\omega}\leq B\|f\|_{1,\omega}.
\end{equation}
By repeated use of
\eqref{eq:generator-bound} therefore gives
\begin{equation}\label{eq:word-bound}
  \|W^*f\|_{1,\omega}\leq B^{\ell(W)}\|f\|_{1,\omega}.
\end{equation}
Since $\pi_{n,x}$ is a probability distribution and satisfies
\begin{equation}\label{eq:finite-propagation-2}
\mathrm{supp} \, \pi_{n,x}\subset[x-n,x+n],
\end{equation}
we have
\begin{equation}\label{eq:pi-weight-bound}
  \|\pi_{n,x}\|_{1,\omega}
  \leq \omega(x) \sigma^n \rho^{n/2}.
\end{equation}
Combining \eqref{eq:word-bound} and \eqref{eq:pi-weight-bound}, we obtain \eqref{eq:pointwise-a-bound-2}.

Next we proceed to prove \eqref{number-est}. 
The support pattern of the matrix $\mathsf{M}$ implies the elementary reflection rule
\begin{align}
  \mathrm{supp} f\subset[K+r,\infty)
  &\quad\Longrightarrow\quad
  \mathrm{supp}(\mathcal{M}_K f)\subset(-\infty,K-r],                         \label{eq:support-rule-1}\\
  \mathrm{supp} f\subset(-\infty,K-r]
  &\quad\Longrightarrow\quad
  \mathrm{supp}(\mathcal{M}_Kf)\subset[K+r,\infty),                         \label{eq:support-rule-2}
\end{align}
for every $r>0$.  
Every non-identity $ W \in \mathcal{G}_{0,N+1} $
has one of the following forms:
\begin{align*}
 \mathcal{T}_k &:=(\mathcal{M}_{N+1}\mathcal{M}_0)^k,
 &\mathcal{T}_{-k} &:=(\mathcal{M}_0 \mathcal{M}_{N+1})^k &&(k\geq1),\\
\mathcal{R}_{0,k}&:=\mathcal{M}_0(\mathcal{M}_{N+1} \mathcal{M}_0)^k,
 &\mathcal{R}_{N+1,k}&:=\mathcal{M}_{N+1} (\mathcal{M}_0 \mathcal{M}_{N+1})^k &&(k\geq0).
\end{align*}
Applying \eqref{eq:support-rule-1} and \eqref{eq:support-rule-2} inductively to
$\delta_x$, $1\leq x\leq N $, gives
\begin{align}
\begin{aligned}
& \mathrm{supp} (\mathcal{T}_k\delta_x)\subset[2k(N+1)+x,\infty), \quad 
 \mathrm{supp}(\mathcal{T}_{-k} \delta_x) \subset(-\infty,x-2k(N+1)],               
 \\
 & \mathrm{supp}(\mathcal{R}_{0,k}\delta_x)\subset(-\infty,-2k(N+1)-x], \\
& \mathrm{supp}(\mathcal{R}_{N+1,k}\delta_x)\subset[2(k+1)(N+1)-x,\infty).          \label{eq:supp-RL}
 \end{aligned}
\end{align}
Since 
\begin{equation*}
  (W^*\pi_{n,x})(x)
=\langle\delta_x,W^*\pi_{n,x}\rangle
  =\langle W\delta_x,\pi_{n,x}\rangle,
\end{equation*}
we see by 
\eqref{eq:finite-propagation-2}
that 
this quantity can be nonzero only if
$\mathrm{supp}(W\delta_x)$ intersects $[x-n,x+n]$.  
From
\eqref{eq:supp-RL}, this implies
\begin{equation*}
  \ell(W)\leq \frac{n}{N+1}+1.
\end{equation*}
This proves \eqref{number-est}.

Therefore, using the triangle inequality and summing first over
\(x=1,\ldots,N\), we obtain
\[
\begin{aligned}
&\sum_{W\in\mathcal G_{0,N+1}}
 \left|
 \sum_{x=1}^{N}(W^*\pi_{n,x})(x)
 \right|  \\
&\qquad\le
N\left(\frac{2n}{N+1}+3\right)
B^{\,n/(N+1)+1}\sigma^n\rho^{n/2}.
\end{aligned}
\]
Since the linear factor in \(n\) can be absorbed into an exponential,
there exist constants \(C,R>0\), depending only on \(N,p,q\), such that
the right-hand side is bounded by \(CR^n\).
This proves \eqref{eq:main-bound}.
\end{proof}

\begin{cor}[Poisson Summation Formula via Trace Formula]\label{cor:poisson}
For the symmetric reflected random walk ($p=q=1/2$), the exact evaluation of the discrete trace formula bridging Theorem \ref{M-Th3} and Theorem \ref{DiscreteTrace} under the diffusive continuum limit yields the classical Poisson summation formula.
Specifically, scaling $N, n \to \infty$ with $t := n / (N+1)^2 > 0$ constant, we obtain:
\begin{align}\label{PoissonFormula}
    \sum_{m=-\infty}^\infty e^{-t m^2 \pi^2} = \frac{1}{\sqrt{\pi t}} \sum_{k=-\infty}^\infty e^{- \frac{k^2}{t}}.
\end{align}
\end{cor}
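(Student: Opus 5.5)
The plan is to compute $\mathrm{Tr}((\mathbf{P}^{(N)})^n)$ in two ways for $p=q=1/2$ and compare them under the scaling $n=t(N+1)^2$.

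\textbf{Spectral side.} Use \eqref{spec} with $2\sqrt{pq}=1$:
\begin{align*}
\mathrm{Tr}((\mathbf{P}^{(N)})^n)=1+(-1)^n+\sum_{m=1}^N\cos^n\Big(\frac{m\pi}{N+1}\Big).
\end{align*}
To avoid the alternation $(-1)^n$, take $n$ even; this is harmless in the limit. Then the terms with $m$ near $N+1$ mirror those with $m$ near $0$, via $\cos(\pi-\theta)=-\cos\theta$. For fixed $m$ we have $\cos^n(m\pi/(N+1))=\exp(n\log\cos(m\pi/(N+1)))\to e^{-t m^2\pi^2/2}$. Both ends of the range, together with the eigenvalues $\pm1$ (the $m=0$ and $m=N+1$ terms), then assemble into $\sum_{m\in\mathbb{Z}}e^{-tm^2\pi^2/2}$. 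Dominated convergence is justified by the bound $|\cos\theta|\le e^{-\theta^2/2}$ on $[0,\pi/2]$, which gives the majorant $e^{-tm^2\pi^2/2}$.

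\textbf{Group side.} Use Theorem \ref{DiscreteTrace} at the level of the coefficient of $u^n/n$, i.e.\ \eqref{BulkTrace} together with Proposition \ref{TraceFormula-P}(ii). By Remark \ref{p=q=1/2}, \eqref{TraceFormula00-boundary} holds for every $x\in[0,N+1]$, so
\begin{align*}
\mathrm{Tr}((\mathbf{P}^{(N)})^n)=\sum_{x=0}^{N+1}\sum_{k\in\mathbb{Z}}\Big[\mathbb{P}(X_n=x+2k(N+1)\mid X_0=x)+\mathbb{P}(X_n=-x+2k(N+1)\mid X_0=x)\Big],
\end{align*}
with the boundary points handled by the stabilizer remark. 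The translation terms do not depend on $x$. By the local CLT, $\mathbb{P}(X_n=2k(N+1)\mid X_0=0)\approx 2(2\pi n)^{-1/2}e^{-(2k(N+1))^2/(2n)}$, where the factor $2$ accounts for parity since $n$ is even. Multiplied by $N+2$, this gives $2(N+1)(2\pi t)^{-1/2}(N+1)^{-1}e^{-2k^2/t}$, so the translations sum to $\sqrt{2/(\pi t)}\sum_k e^{-2k^2/t}$. The reflection terms are probabilities of landing at $2k(N+1)-x$, which is $O(N^{-1})$ per point. Summed over the $N+2$ values of $x$, they become a Riemann sum of a Gaussian density over a full period, i.e.\ $O(1)$. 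Together with the boundary and stabilizer terms, they supply the constant term. Tail control in $k$ comes from the Gaussian (Hoeffding) bound $\mathbb{P}(|X_n|\ge r)\le 2e^{-r^2/(2n)}$, which makes the sum over $k$ uniformly convergent.

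\textbf{Comparison and the main obstacle.} Equate the two limits. The Gaussian parts give $\sum_m e^{-t'm^2\pi^2}=(\pi t')^{-1/2}\sum_k e^{-k^2/t'}$ after the renaming $t'=t/2$, which is \eqref{PoissonFormula}; the statement's $t$ is understood up to this normalization. The main obstacle is the bookkeeping of the $O(1)$ pieces. These are the reflection terms, the boundary factor, and the parity and stabilizer doubling on the group side, and the eigenvalues $\pm1$ with the end-of-range terms on the spectral side. They must match exactly, or else cancel, so that no spurious constant survives. I would handle this first by pairing each reflection $\mathcal{R}_{0,k}$ and $\mathcal{R}_{N+1,k}$ with the mirror-image end of the spectral sum, using the exactly computable $p=q$ case. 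If this matching is awkward, the fallback is to note that both sides are Gaussian theta-type expressions whose non-constant parts already agree, and to identify the constant by letting $t\to\infty$. In that limit both sides tend to the number of unit-modulus eigenvalues, so the constants must match.
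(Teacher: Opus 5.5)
\textbf{Assessment: genuine gap in the constant term.} Your strategy is the paper's: compute the trace spectrally and via the group decomposition, and get the Gaussian side from the translations through the local CLT. Using $\mathrm{Tr}((\mathbf{P}^{(N)})^{n})$ with $n$ even, instead of the paper's $\mathrm{Tr}((\mathbf{P}^{(N)})^{2n})$, only rescales $t$. The problem is the step you yourself call the main obstacle: it is left open, and as written it gives a wrong constant.

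On the spectral side, for even $n$ the eigenvalues $1$ and $-1$ each contribute $1$. Each end of the range contributes $\sum_{m\ge1}e^{-tm^2\pi^2/2}$. The limit is therefore $2+2\sum_{m\ge1}e^{-tm^2\pi^2/2}=1+\sum_{m\in\mathbb{Z}}e^{-tm^2\pi^2/2}$, not $\sum_{m\in\mathbb{Z}}e^{-tm^2\pi^2/2}$.

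On the group side you only show that the reflection and boundary terms are $O(1)$. The Poisson formula follows only if their limit is exactly $1$ for every $t$, and neither proposed remedy gives this.
\begin{itemize}
\item Pairing $\mathcal{R}_{0,k}$ and $\mathcal{R}_{N+1,k}$ with the mirror end of the spectrum cannot work. That end produces the $t$-dependent series $\sum_{m\ge1}e^{-tm^2\pi^2/2}$; it is what makes the spectral sum two-sided. The reflections, by contrast, produce a $t$-independent constant.
\item The $t\to\infty$ fallback only shows that the leftover group-side limit $\rho(t)$ satisfies $\rho(t)\to1$ as $t\to\infty$. Concluding $\rho\equiv1$ presupposes that $\rho$ does not depend on $t$. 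Also, ``the non-constant parts already agree'' is the identity being proved.
\end{itemize}

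\textbf{The missing step.} The paper evaluates the reflection sum exactly. By translation invariance, the reflection term at $x$ equals $\sum_{k}\mathbb{P}(X_n=2(k(N+1)-x)\mid X_0=0)$. The map $(x,k)\mapsto k(N+1)-x$ is a bijection of $\{0,\dots,N\}\times\mathbb{Z}$ onto $\mathbb{Z}$, and $X_n\in2\mathbb{Z}$ for even $n$. Conservation of probability then gives, for every $N$,
\begin{align*}
\sum_{x=1}^{N}\sum_{k\in\mathbb{Z}}\mathbb{P}(X_n=-x+2k(N+1)\mid X_0=x)=1-\sum_{k\in\mathbb{Z}}\mathbb{P}(X_n=2k(N+1)\mid X_0=0).
\end{align*}
The subtracted sum is the boundary return probability $\mathbb{P}(X^{(N)}_n=0\mid X^{(N)}_0=0)$. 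At fixed $t$ it is $O(1/N)$, and the boundary terms vanish for the same reason. So the group side tends to $\sqrt{2/(\pi t)}\sum_{k}e^{-2k^2/t}+1$, the spectral side tends to $1+\sum_{m}e^{-tm^2\pi^2/2}$, and the constants cancel. Your Riemann-sum remark, carried through with a uniform local CLT error, would also yield $1$ (the integral over one period of a periodized Gaussian density). The bijection makes this exact and free of estimates.

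\textbf{A smaller technical issue.} Hoeffding's inequality alone bounds $(N+2)\,\mathbb{P}(X_n=2k(N+1)\mid X_0=0)$ only by $2(N+2)e^{-2k^2/t}$, which is not uniform in $N$. Dominated convergence over $k$, and the $O(1/N)$ claim above, need a pointwise bound of the form $\frac{C}{\sqrt{n}}e^{-ck^2/t}$. The paper obtains such a bound from Stirling's formula; unimodality of the binomial combined with Hoeffding would also give it.
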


\begin{proof}
Since the scaling relation $n=t(N+1)^2$ does not necessarily define an integer
$n$ for every $N$, we take a sequence $n=n_N\in\mathbb{N}$ such that
\begin{align*}
    t_N:=\frac{n_N}{(N+1)^2}\longrightarrow t \qquad (N\to\infty).
\end{align*}

Each element of the discrete reflection group $\mathcal{G}_{0,N+1} = \langle \mathcal{M}_0, \mathcal{M}_{N+1} \rangle$ can be uniquely decomposed into a translation operator $\mathsf{T}_k$ and a reflection operator $ \mathsf{R}_k$ defined for any $k \in \mathbb{Z}$ as:
\begin{align*}
    \mathsf{T}_k x := x + 2k(N+1), \quad \mathsf{R}_k x := -x + 2k(N+1).
\end{align*}
Using these operators, we evaluate $\mathrm{Tr} ((\mathbf{P}^{(N)})^{2n})$ in two distinct ways.

First, from \eqref{spec}, the exact trace for $p=q=1/2$ is given by:
\begin{align*}
    \mathrm{Tr} ((\mathbf{P}^{(N)})^{2n}) = 2 +  \sum_{m=1}^N \cos^{2n} \left( \frac{m \pi}{N+1} \right).
\end{align*}
Since
\begin{align*}
    \cos^{2n}\left(\frac{(N+1-m)\pi}{N+1}\right)
    =
    \cos^{2n}\left(\frac{m\pi}{N+1}\right),
\end{align*}
and the term $m=(N+1)/2$ is equal to zero when $N+1$ is even,
we obtain
\begin{align*}
    \mathrm{Tr}\bigl((\mathbf{P}^{(N)})^{2n}\bigr)
    =2+2\sum_{m=1}^{\lfloor N/2\rfloor}\cos^{2n}\left(\frac{m\pi}{N+1}\right).
\end{align*}

For fixed $\delta\in(0,\pi/2)$, $0\leq x\leq\delta$, we have
\begin{align*}
    -\log(\cos x)-\frac{x^2}{2} &= \int_0^x\int_0^s\tan^2 r\,dr\,ds \leq \frac{1}{\cos^2\delta}\int_0^x\int_0^s r^2\,dr\,ds  =\frac{x^4}{12\cos^2\delta}.
\end{align*}
Consequently,
\begin{align*}
    \exp\left(-\frac{x^2}{2}-\frac{x^4}{12\cos^2\delta}\right)
    \leq \cos x 
    \leq \exp\left(-\frac{x^2}{2}\right),
    \qquad 0\leq x\leq\delta.
\end{align*}
The upper bound also holds for every $x\in[0,\pi/2]$, because
\begin{align*}
    \frac{d}{dx}\left( \log(\cos x)+\frac{x^2}{2} \right) = x-\tan x \leq 0.
\end{align*}

For any fixed $m$, evaluating this at $x_m = \frac{m \pi}{N+1}$, we have $x_m \leq \delta$ for all sufficiently large $N$. Raising the
preceding inequality to the power $2n_N$ gives
\begin{align*}
    &\exp\left(-n_N x_m^2 - \frac{n_N x_m^4}{6\cos^2\delta} \right)
    \leq \cos^{2n_N}\left(x_{m}\right)
    \leq \exp\left(-n_N x_m^2\right).
\end{align*}
Since $t_N\to t$, the lower and upper bounds both converge to
$e^{-tm^2\pi^2}$. Hence, for every fixed $m$,
\begin{align*}
\lim_{N\to\infty}\cos^{2n_N}\left(\frac{m\pi}{N+1}\right) = e^{-tm^2\pi^2}.
\end{align*}
To justify passage of the limit through the sum, note that, for all
sufficiently large $N$, $t_N\geq \frac{t}{2}$.
Therefore, for
$1\leq m\leq\lfloor N/2\rfloor$,
\begin{align*}
    0\leq
    \cos^{2n_N}\left(\frac{m\pi}{N+1}\right)
    \leq \exp\left(-t_N m^2\pi^2\right)
    \leq \exp\left(-\frac{t\pi^2m^2}{2}\right).
\end{align*}
The last sequence is summable over $m\in\mathbb{N}$. 
Applying the dominated convergence theorem, we obtain
\begin{align}\label{SpectralLimit}
    \lim_{N\to\infty}
    \mathrm{Tr}\bigl((\mathbf{P}^{(N)})^{2n}\bigr)
    =2+2\sum_{m=1}^{\infty}e^{-tm^2\pi^2} 
    = 1+\sum_{m=-\infty}^{\infty}e^{-tm^2\pi^2}.
\end{align}

Second, we expand the trace using \eqref{DecompTrace} and \eqref{BulkTrace}:
\begin{align}\label{SpatialDecompExact}
    \begin{aligned}
    & \mathrm{Tr} ((\mathbf{P}^{(N)})^{2n}) \\
    &= \sum_{k=-\infty}^\infty \sum_{x=1}^{N} \mathbb{P}(X_{2n} = \mathsf{T}_k x \mid X_0 = x) + \sum_{k=-\infty}^\infty \sum_{x=1}^{N} \mathbb{P}(X_{2n} = \mathsf{R}_k x \mid X_0 = x)  \\
    & \qquad + \sum_{x \in \{0, N+1\}} \mathbb{P} (X^{(N)}_{2n} =x \mid X^{(N)}_0 =x) \\
    &= \sum_{k=-\infty}^\infty N \mathbb{P}(X_{2n} = 2k(N+1) \mid X_0 = 0)  \\
    &\quad + \sum_{k=-\infty}^\infty \sum_{x=1}^{N} \mathbb{P}(X_{2n} = -2x + 2k(N+1) \mid X_0 = 0)\\
    & \qquad + 2  \mathbb{P} (X^{(N)}_{2n} =0 \mid X^{(N)}_0 =0) 
    \end{aligned}
\end{align}
where we used the translation invariance $\mathbb{P}(X_{2n} = y \mid X_0 = x) = \mathbb{P}(X_{2n} = y-x \mid X_0 = 0)$.

We first note that, 
by \eqref{TraceFormula00-boundary} with $x=y=0$, by $ N+1 = \sqrt{n/t} $, and by a use of Stirling formula, there exists a constant $C > 0$ such that
\begin{align}\label{boundof P}
    \begin{aligned}
    &   \mathbb{P} (X^{(N)}_{2n} =0 \mid X^{(N)}_0 =0) 
       = \sum_{k \in \mathbb{Z} } \mathbb{P}(X_{2n} = 2k(N+1) \mid X_0 = 0) \\
      &  = \sum_{k \in  \mathbb{Z}\cap [-\sqrt{nt}, \sqrt{nt}]} 2^{-2n}
     \binom{2n}{n+k\sqrt{n/t}}
      \leq \frac{C}{\sqrt{n}} \sum_{k \in  \mathbb{Z}\cap [-\sqrt{nt}, \sqrt{nt}]} e^{-\frac{k^2}{t}}
    \end{aligned}
\end{align}
and therefore
\begin{align}\label{rec-term}
    \mathbb{P} (X^{(N)}_{2n} =0 \mid X^{(N)}_0 =0) \to 0 \quad (n,N \to \infty).
\end{align}

To evaluate the other terms, we utilize the Fourier inversion formula:
\begin{align*}
\mathbb{P}(X_{2n} = 2z \mid X_0 = 0) = \frac{1}{\pi} \int_{-\pi/2}^{\pi/2} e^{-i 2z \theta} \cos^{2n}\theta \, d\theta.
\end{align*}
We define the absolute error $R_n(z)$ between the discrete walk and the continuous Gaussian kernel as:
\begin{align*}
R_n(z) := \mathbb{P}(X_{2n} = 2z \mid X_0 = 0) - \frac{1}{\sqrt{\pi n}} e^{-\frac{z^2}{n}}.
\end{align*}
Using the integral identity $\frac{1}{\sqrt{\pi n}} e^{-z^2/n} = \frac{1}{\pi} \int_{-\infty}^{\infty} e^{-i 2z \theta} e^{-n\theta^2} d\theta$, the error function is expressed as:
\begin{align*}
R_n(z) = \frac{1}{\pi} \int_{-\pi/2}^{\pi/2} e^{-i 2z \theta} \left( \cos^{2n}\theta - e^{-n\theta^2} \right) d\theta - \frac{1}{\pi} \int_{|\theta| > \pi/2} e^{-i 2z \theta} e^{-n\theta^2} d\theta.
\end{align*}
The triangle inequality provides a uniform upper bound with respect to $z$:
\begin{align}\label{uniformbound}
|R_n(z)| \le \frac{1}{\pi} \int_{-\pi/2}^{\pi/2} \left| \cos^{2n}\theta - e^{-n\theta^2} \right| d\theta + \frac{1}{\pi} \int_{|\theta| > \pi/2} e^{-n\theta^2} d\theta.
\end{align}
The second integral decays exponentially as $e^{-n\pi^2/4}$. For the first integral, using the  inequality $0 \le e^{-n\theta^2} - \cos^{2n}\theta \le n \theta^4 e^{-n\theta^2}$ for $\theta \in [-\pi/2, \pi/2]$, the difference is bounded by $\int_{-\infty}^{\infty} n \theta^4 e^{-n\theta^2} d\theta$. Substituting $\phi = \sqrt{n}\theta$, this evaluates to $n^{-3/2} \int_{-\infty}^{\infty} \phi^4 e^{-\phi^2} d\phi = \frac{3\sqrt{\pi}}{4} n^{-3/2}$. Consequently, there exists a constant $C_1 > 0$ such that $\sup_{z \in \mathbb{Z}} |R_n(z)| \le C_1 n^{-3/2}$.

Under the scaling $n = t(N+1)^2$, we set 
\begin{align*}
\begin{aligned}
    \tilde{R}_N(k)& := R_{t(N+1)^2}(k(N+1)) \\
    &= \mathbb{P}(X_{2n} = 2k(N+1) \mid X_0 = 0) - \frac{1}{\sqrt{\pi t}(N+1)} e^{-\frac{k^2}{t}}.
\end{aligned}
\end{align*}
The uniform bound \eqref{uniformbound} implies $\sup_{z \in \mathbb{Z}} |R_{t(N+1)^2}(z)| \le \frac{C_1}{t^{3/2}(N+1)^3}$. 
Multiplying by $N$ and taking the limit yields:
\begin{align*}
\begin{aligned}
    &\lim_{N \to \infty} N\mathbb{P}(X_{2n} = 2k(N+1) \mid X_0 = 0) \\
    &= \lim_{N \to \infty} \left( \frac{N}{\sqrt{\pi t}(N+1)} e^{-k^2/t} + N\tilde{R}_N(k) \right) = \frac{1}{\sqrt{\pi t}} e^{-k^2/t}.
\end{aligned}
\end{align*}
Since $\mathbb{P}(X_{2n}=2k(N+1)\mid X_0=0)\leq\frac{C}{\sqrt{n}}e^{-k^2/t}$
by the termwise estimate used in \eqref{boundof P}, and
$\frac{N}{\sqrt{n}}=\frac{N}{\sqrt{t}\,(N+1)}\leq\frac{1}{\sqrt{t}}$, the
summands are dominated by $\frac{C}{\sqrt{t}}e^{-k^2/t}$ uniformly in $N$;
the dominated convergence theorem therefore implies
\begin{align}\label{TransLimitExact}
\lim_{N \to \infty} \sum_{k=-\infty}^\infty N\mathbb{P}(X_{2n} = 2k(N+1) \mid X_0 = 0) = \frac{1}{\sqrt{\pi t}} \sum_{k=-\infty}^\infty e^{-k^2/t}.
\end{align}

We decompose the double sum of the reflection component, which is the second term of \eqref{SpatialDecompExact}, as follows:
\begin{align*}
\begin{aligned}
&\sum_{k=-\infty}^\infty \sum_{x=1}^{N} \mathbb{P}(X_{2n} = -2x + 2k(N+1) \mid X_0 = 0) \\
&= \sum_{k=-\infty}^\infty \sum_{x=0}^{N} \mathbb{P}(X_{2n} = 2(-x + k(N+1)) \mid X_0 = 0) \\
& \qquad - \sum_{k=-\infty}^\infty \mathbb{P}(X_{2n} = 2(k-1)(N+1) \mid X_0 = 0).
\end{aligned}
\end{align*}
The mapping $(x, k) \mapsto -x + k(N+1)$ is a bijection from $\{0, \dots, N\} \times \mathbb{Z}$ to $\mathbb{Z}$. Then, since the simple random walk strictly has its support on even states at time $2n$, the first term is determined by the conservation of total probability as:
\begin{align*}
\sum_{k=-\infty}^\infty \sum_{x=0}^{N} \mathbb{P}(X_{2n} = 2(-x + k(N+1)) \mid X_0 = 0) = \sum_{z=-\infty}^\infty \mathbb{P}(X_{2n} = 2z \mid X_0 = 0) = 1.
\end{align*}
The remaining second term is a single sum corresponding to the boundary
point $x=0$; after the shift $k\mapsto k+1$ it equals
$\mathbb{P}(X^{(N)}_{2n}=0\mid X^{(N)}_0=0)$, which tends to $0$ by
\eqref{rec-term}. Therefore the limit of the entire reflection component is
strictly determined as:
\begin{align*}
&\lim_{N \to \infty} \sum_{k=-\infty}^\infty \mathbb{P}(X_{2n} = 2(k-1)(N+1) \mid X_0 = 0) \\
&\le \lim_{N \to \infty} \frac{1}{N+1} \left( \frac{1}{\sqrt{\pi t}} \sum_{j=-\infty}^\infty e^{-j^2/t} \right) = 0.
\end{align*}
Therefore, the limit of the entire reflection component is strictly determined as:
\begin{align}\label{RefLimitExact}
\lim_{N \to \infty} \sum_{k=-\infty}^\infty \sum_{x=1}^{N} \mathbb{P}(X_{2n} = \mathsf{R}_k x \mid X_0 = x) =  1.
\end{align}

Combining the spectral limit \eqref{SpectralLimit} with the spatial limits \eqref{rec-term}, \eqref{TransLimitExact}, 
and \eqref{RefLimitExact}, 
\begin{align*}
1 + \sum_{m=-\infty}^\infty e^{-t m^2 \pi^2} = \frac{1}{\sqrt{\pi t}} \sum_{k=-\infty}^\infty e^{-k^2/t} + 1
\end{align*}
is derived. Subtracting $1$ from both sides yields the classical Poisson summation formula \eqref{PoissonFormula}.
\end{proof}

\begin{rem}[Historical Context and Probabilistic Proofs of the Poisson Summation Formula]
The deep connection between the Poisson summation formula (the inversion formula for the Jacobi theta function) and the transition densities of random walks has a rich history in probability theory and analytic number theory. Classically, the theta function emerges in the asymptotic distribution of various functionals of Brownian motion. For instance, the celebrated Kolmogorov-Smirnov statistic \cite{Kolmogorov1933, Smirnov1939} evaluates the maximum deviation of an empirical distribution function, which Feller \cite{Feller1948} elegantly proved converges to the distribution of the supremum of a Brownian bridge. The equivalence of evaluating this probability via the method of images (spatial reflection) and via eigenfunction expansion (spectral sum) naturally yields the theta inversion formula.

More recently, Smith and Diaconis \cite{SmithDiaconis1988} investigated the maximum reach of honest Bernoulli excursions, explicitly demonstrating that the functional equation of the Riemann zeta function can be derived by applying the Mellin transform to the continuous limit of these excursion probabilities. This perspective was profoundly generalized by Biane, Pitman, and Yor \cite{BianePitmanYor2001}, who systematically surveyed how the probability laws of Brownian excursions and meanders inherently encode the automorphic properties of the Jacobi theta and Riemann zeta functions.

However, the derivation presented in Corollary \ref{cor:poisson} offers a fundamentally different and discrete geometric perspective:
The standard probabilistic proofs \cite{Feller1948, BianePitmanYor2001} assume the continuum and the smooth topological structure (e.g., the heat equation on a manifold or the Wiener measure) \textit{a priori}. In contrast, our starting point is a strictly finite, discrete algebraic object—the exact trace formula of a transition matrix on a finite path graph. The topological concept of ``winding numbers" in the continuum formula emerges purely as an asymptotic shadow of the word length in the discrete reflection group $\mathcal{G}_{0,N+1}$.
While Smith and Diaconis \cite{SmithDiaconis1988} applied the Mellin transform to the continuous limit to uncover the zeta function's properties, our discrete trace formula (Theorem \ref{DiscreteTrace}) demonstrates that the local Euler-product structure of the zeta function is already
    encoded in the combinatorial geometry of the finite graph, prior to any continuum scaling.
In this light, the discrete trace formula for the reflected random walk can be viewed as a pre-geometric skeleton of these classical probabilistic identities, bridging discrete graph theory and continuous wave mechanics.
\end{rem}

\section{Trace Formula for the Reflected Szegedy Walk}\label{sec:6}

This section establishes a quantum analog of the discrete trace formula for the reflected Szegedy walk. By introducing a similarity transformation that symmetrizes the reflection operators, we uncover a term-by-term local Konno-Sato correspondence between the local zeta functions of the classical random walk and the quantum walk. Combining this local correspondence with the global Konno-Sato theorem, we  decompose the determinant of the finite Szegedy walk into a product over the reflection group.

\subsection{Symmetrized Reflections and Trace Formula}

Let $D = \mathrm{diag}(d_0, d_1, \dots, d_{N+1})$ be a diagonal matrix, 
where 
\begin{align*}
    d_0 &= 1, \\
    d_x &= q^{(x-1)/2} p^{-x/2} \quad (1 \le x \le N), \\
    d_{N+1} &= q^{N/2} p^{-N/2}.
\end{align*}
Using this $D$, the matrix $\tilde{\mathbf{P}}^{(N)} = D^{-1} \mathbf{P}^{(N)} D$ is symmetric ($\tilde{\mathbf{P}}^{(N)}_{x,y} = \tilde{\mathbf{P}}^{(N)}_{y,x}$):
\begin{align*}
    \begin{aligned}
& \tilde{\mathbf{P}}^{(N)}_{x, x+1} = \tilde{\mathbf{P}}^{(N)}_{x+1, x} = \sqrt{pq} \quad (1 \le x \le N-1), \\
& \tilde{\mathbf{P}}^{(N)}_{0, 1} = \tilde{\mathbf{P}}^{(N)}_{1, 0} = \sqrt{p}, \quad \tilde{\mathbf{P}}^{(N)}_{N, N+1} = \tilde{\mathbf{P}}^{(N)}_{N+1, N} = \sqrt{q}\\
& \tilde{\mathbf{P}}^{(N)}_{x, y} =0 \quad \text{if $ |x-y| \ne 1 $}.
    \end{aligned}
\end{align*}

Though (the transpose of) $ \tilde{\mathbf{P}}^{(N)}$ is not Markov for general $ p, q$, we can construct a spatial trace formula for it by redefining the reflection operators via the same similarity transformation.
Let $\mathcal{D}_\infty : \mathbb{R}^{\mathbb{Z}} \to \mathbb{R}^{\mathbb{Z}}$ be the infinite diagonal operator defined by 
$(\mathcal{D}_\infty f)(x) = (q/p)^{x/2} f(x)$. 
The free symmetric random walk matrix is given by $\tilde{\mathbf{P}}^{(\infty)} := \mathcal{D}_\infty^{-1} \mathbf{P}^{(\infty)} \mathcal{D}_\infty$.

We define the symmetrized reflection operators acting on functions by
\begin{align*}
    \tilde{\mathcal{M}}_K := \mathcal{D}_\infty \mathcal{M}_K \mathcal{D}_\infty^{-1}, \quad K \in \{0, N+1\},
\end{align*}
and let $\tilde{\mathcal{G}}_{0,N+1} = \langle \tilde{\mathcal{M}}_0, \tilde{\mathcal{M}}_{N+1} \rangle$.
By definition, their adjoint operators satisfy $\tilde{\mathcal{M}}^*_K = \mathcal{D}_\infty^{-1} \mathcal{M}^*_K \mathcal{D}_\infty$.
\begin{lemma}\label{trace-sym}
For all $t \in \mathbb{Z}_+$ and interior points $x, y \in [1,N] \cap \mathbb{Z}$, the transition elements of the symmetrized matrix $\tilde{\mathbf{P}}^{(N)}$ satisfy:
\begin{align}\label{SymTraceFormula00}
    \langle y | (\tilde{\mathbf{P}}^{(N)})^t | x \rangle = \sum_{\tilde{W} \in \tilde{\mathcal{G}}_{0,N+1}} \langle y | \tilde{W}^* (\tilde{\mathbf{P}}^{(\infty)})^t | x \rangle.
\end{align}
Consequently, the trace over the interior is given by:
\begin{align*}
    \sum_{x=1}^{N} \langle x | (\tilde{\mathbf{P}}^{(N)})^t | x \rangle
    = \sum_{\tilde{W} \in \tilde{\mathcal{G}}_{0,N+1}} 
    \sum_{x=1}^{N} \langle x| \tilde{W}^*(\tilde{\mathbf{P}}^{(\infty)})^t |x \rangle.
\end{align*}
\end{lemma}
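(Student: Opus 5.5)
The plan is to obtain \eqref{SymTraceFormula00} from Proposition \ref{TraceFormula-P} (ii) by conjugating with the diagonal similarity. There are four ingredients: the unsymmetrized reflection principle, the relation between $D$ and $\mathcal{D}_\infty$ on the interior, the conjugation identities for $\tilde W^*$ and $\tilde{\mathbf{P}}^{(\infty)}$, and the bookkeeping of transposes.

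\textbf{Step 1 (the two conjugations agree on the interior).} For $1\le x\le N$ we have $d_x=q^{(x-1)/2}p^{-x/2}=q^{-1/2}(q/p)^{x/2}$. So on $[1,N]$ the matrix $D$ coincides with $\mathcal{D}_\infty$ up to the harmless constant $q^{-1/2}$, which cancels in any expression of the form $d_y^{-1}(\cdots)d_x$. Hence, for $x,y\in[1,N]$,
\begin{align*}
\langle y|(\tilde{\mathbf{P}}^{(N)})^t|x\rangle=d_y^{-1}\langle y|(\mathbf{P}^{(N)})^t|x\rangle d_x=(q/p)^{(x-y)/2}\,\mathbb{P}(X^{(N)}_t=y\mid X^{(N)}_0=x).
\end{align*}
The boundary entries $d_0,d_{N+1}$ do not enter, because $(\mathbf{P}^{(N)})^t=D(\tilde{\mathbf{P}}^{(N)})^tD^{-1}$ holds entrywise.

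\textbf{Step 2 (apply the reflection principle).} By Proposition \ref{TraceFormula-P} (ii), the right-hand side of Step 1 equals $(q/p)^{(x-y)/2}\sum_W (W^*g_x)(y)$, where $g_x=\mathbb{P}(X_t=\cdot\mid X_0=x)=(\mathbf{P}^{(\infty)})^t\delta_x$. Write $g_x=\mathcal{D}_\infty(\tilde{\mathbf{P}}^{(\infty)})^t\mathcal{D}_\infty^{-1}\delta_x=(q/p)^{-x/2}\mathcal{D}_\infty(\tilde{\mathbf{P}}^{(\infty)})^t\delta_x$. Then
\[
W^*\mathcal{D}_\infty=\mathcal{D}_\infty(\mathcal{D}_\infty^{-1}W^*\mathcal{D}_\infty)=\mathcal{D}_\infty\tilde W^*,
\]
where $\tilde W$ is the word in $\tilde{\mathcal{M}}_0,\tilde{\mathcal{M}}_{N+1}$ corresponding to $W$. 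The map $W\mapsto\tilde W$ is a group isomorphism, being conjugation by $\mathcal{D}_\infty$. Evaluating at $y$ produces a factor $(q/p)^{y/2}$. Altogether the prefactor is $(q/p)^{(x-y)/2}\cdot(q/p)^{-x/2}\cdot(q/p)^{y/2}=1$, so the summand becomes $\langle y|\tilde W^*(\tilde{\mathbf{P}}^{(\infty)})^t|x\rangle$, which is \eqref{SymTraceFormula00}.

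\textbf{Step 3 (transposes and the interior trace).} The main care point is consistency of conventions. The paper's $\mathbf{P}$ is the transpose of the transition matrix, so $\langle y|\mathbf{P}^t|x\rangle$ is the probability of going from $x$ to $y$. Likewise $\tilde W^*=\mathcal{D}_\infty^{-1}W^*\mathcal{D}_\infty$ must be read with the stated adjoint convention. I would check that the direction of the conjugation for $\tilde{\mathcal{M}}^*_K$ (inverse on the left) is exactly what Step 2 needs; if the sign of the exponent came out wrong, the prefactor would be $(q/p)^{x-y}$ and the identity would fail off the diagonal. I would also verify that $\mathcal{D}_\infty$ maps finitely supported functions to finitely supported ones, so that $\tilde W^*$ is well defined on $\mathbb{R}^{(\mathbb{Z})}$ and each sum stays finite, as in Proposition \ref{TraceFormula-P}.

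The trace statement then follows by setting $y=x$ and summing over $x\in[1,N]$, exchanging the finite sum over $x$ with the sum over $\tilde W$. This exchange is legitimate because, for fixed $t$, only finitely many $\tilde W$ contribute by the support argument in the proof of Lemma \ref{prop:absolute-summability}.
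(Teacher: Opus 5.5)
Your proposal is correct and follows essentially the same route as the paper. Both arguments rewrite both sides of Proposition \ref{TraceFormula-P}~(ii) using $\mathbf{P}^{(N)}=D\tilde{\mathbf{P}}^{(N)}D^{-1}$, $\mathbf{P}^{(\infty)}=\mathcal{D}_\infty\tilde{\mathbf{P}}^{(\infty)}\mathcal{D}_\infty^{-1}$ and $W^*=\mathcal{D}_\infty\tilde W^*\mathcal{D}_\infty^{-1}$, and then observe that the diagonal factors $(q/p)^{\pm x/2}$ and $(q/p)^{\pm y/2}$ cancel. Your bookkeeping is slightly more careful than the paper's: you keep the constant $q^{-1/2}$ in $d_x$ (it cancels), and you justify exchanging the sums over $x$ and $\tilde W$ by noting that, for fixed $t$, only finitely many words contribute.
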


\begin{proof}
Recall the generalized reflection principle 
\eqref{TraceFormula00} of 
Proposition \ref{TraceFormula-P} (ii), which holds exactly for the interior points $x, y \in [1,N]$:
\begin{align*}
    \langle y | (\mathbf{P}^{(N)})^t | x \rangle = \sum_{W \in \mathcal{G}_{0,N+1}} \langle y | W^* (\mathbf{P}^{(\infty)})^t | x \rangle.
\end{align*}
The left-hand side is related to the symmetrized matrix by $\mathbf{P}^{(N)} = D \tilde{\mathbf{P}}^{(N)} D^{-1}$, yielding:
\begin{align*}
    \langle y | (\mathbf{P}^{(N)})^t | x \rangle &= d_y \langle y | (\tilde{\mathbf{P}}^{(N)})^t | x \rangle d_x^{-1} \\
    &= (q/p)^{y/2} \langle y | (\tilde{\mathbf{P}}^{(N)})^t | x \rangle (p/q)^{x/2}.
\end{align*}
For the right-hand side, substituting $\mathbf{P}^{(\infty)} = \mathcal{D}_\infty \tilde{\mathbf{P}}^{(\infty)} \mathcal{D}_\infty^{-1}$ and $W^* = \mathcal{D}_\infty \tilde{W}^* \mathcal{D}_\infty^{-1}$, we have
\begin{align*}
    W^* (\mathbf{P}^{(\infty)})^t 
    &= (\mathcal{D}_\infty \tilde{W}^* \mathcal{D}_\infty^{-1}) (\mathcal{D}_\infty (\tilde{\mathbf{P}}^{(\infty)})^t \mathcal{D}_\infty^{-1}) \\
    &= \mathcal{D}_\infty \tilde{W}^* (\tilde{\mathbf{P}}^{(\infty)})^t \mathcal{D}_\infty^{-1}.
\end{align*}
Evaluating this product at the states $\langle y |$ and $| x \rangle$:
\begin{align*}
    \langle y | W^* (\mathbf{P}^{(\infty)})^t | x \rangle 
    &= \langle y | \mathcal{D}_\infty \tilde{W}^* (\tilde{\mathbf{P}}^{(\infty)})^t \mathcal{D}_\infty^{-1} | x \rangle \\
    &= (q/p)^{y/2} \langle y | \tilde{W}^* (\tilde{\mathbf{P}}^{(\infty)})^t | x \rangle (p/q)^{x/2}.
\end{align*}
Equating both sides and dividing by the common boundary factors $(q/p)^{y/2}$ and $(p/q)^{x/2}$ yields \eqref{SymTraceFormula00}. Setting $y=x$ and summing over $x \in [1, N]$ directly produces the bulk trace formula.
\end{proof}
\subsection{Konno-Sato Correspondence between Local Bulk Zeta Functions}

We define the local zeta functions associated with the symmetrized matrix $\tilde{\mathbf{P}}^{(N)}$ and the quantum walk $Y^{(N)}$ over the symmetric reflection group $\tilde{W} \in \tilde{\mathcal{G}}_{0,N+1}$ by  limiting the spatial expansion to the bulk interior $x \in [1, N]$: for sufficiently small $ z $
and $ u $, 
\begin{align*}
\zeta_{\tilde{W},\tilde{\mathbf{P}}^{(N)}} (z) &= 
    \exp \left( \sum_{n=1}^\infty \frac{z^n}{n} \sum_{x=1}^{N} \langle x | \tilde{W}^* (\tilde{\mathbf{P}}^{(\infty)})^n | x\rangle \right), \\
\zeta_{\tilde{W},Y^{(N)}} (u) &= 
 \exp \left( \sum_{n=1}^\infty \frac{u^n}{n} \sum_{x=1}^{N}\mathrm{Tr}_{\mathbb{C}^2}\langle \tilde{W} \delta_x |  (\mathbf{U}^{(\infty)})^n | x \rangle \right).
\end{align*}

\begin{lemma}\label{local-konno-sato}
For sufficiently small $ u \in \mathbb{C} $, the term-by-term local Konno-Sato correspondence holds:
\begin{align*}
\zeta_{\tilde{W}, Y^{(N)}} (u) = (1+u^2)^{-c(\tilde{W})} \zeta_{\tilde{W}, \tilde{\mathbf{P}}^{(N)}} \left(\frac{2u}{1+u^2}\right),
\end{align*}
where $c(\tilde{W}) := \sum_{x=1}^{N} \langle x | \tilde{W}^* | x \rangle$. Specifically, 
\begin{align*}
    c(\tilde{W}) = \begin{cases}
        N & \text{if } \tilde{W} = id, \\
        0 & \text{otherwise}.
    \end{cases}
\end{align*}
\end{lemma}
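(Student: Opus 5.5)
The plan is to work at the level of resolvents (generating functions) of the free walks and compare them matrix-entry by matrix-entry. Everything reduces to one identity between free resolvents, plus the observation that the only discrepancy is a multiple of the identity.

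\textbf{Step 1: the two free resolvents.} Using the Fourier representation of Lemma \ref{FourLemma}, $\tilde{\mathbf{P}}^{(\infty)}$ has symbol $2\sqrt{pq}\cos k$. Since the conjugation by $\mathcal{D}_\infty$ turns $pe^{-ik}+qe^{ik}$ into $\sqrt{pq}(e^{ik}+e^{-ik})$, this is a scalar symbol, and
\begin{align*}
\langle y|(I-z\tilde{\mathbf{P}}^{(\infty)})^{-1}|x\rangle=\int_0^{2\pi}\frac{e^{ik(y-x)}}{1-2z\sqrt{pq}\cos k}\frac{dk}{2\pi}.
\end{align*}
For the Szegedy walk I compute $\mathrm{Tr}_{\mathbb{C}^2}(I_2-u\widehat{\mathbf{U}^{(\infty)}}(k))^{-1}$. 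The explicit $\widehat{\mathbf{U}^{(\infty)}}(k)$ in the proof of Theorem \ref{Th3} has trace $4\sqrt{pq}\cos k$ and determinant $1$. Hence
\begin{align*}
\mathrm{Tr}(I-uA)^{-1}=\frac{2-u\,\mathrm{tr}A}{1-u\,\mathrm{tr}A+u^2}=\frac{2-4u\sqrt{pq}\cos k}{1-4u\sqrt{pq}\cos k+u^2},
\end{align*}
where $A=\widehat{\mathbf{U}^{(\infty)}}(k)$. Write $z=2u/(1+u^2)$. Then
\begin{align*}
\frac{2-4u\sqrt{pq}\cos k}{1-4u\sqrt{pq}\cos k+u^2}=\frac{1-u^2}{1+u^2}\cdot\frac{1}{1-2z\sqrt{pq}\cos k}+1 .
\end{align*}
Taking $-u\,\frac{d}{du}\log$ of the claimed identity, the claim is equivalent to
\begin{align*}
\sum_{n\ge1}u^n\sum_x\mathrm{Tr}\langle\tilde W\delta_x|(\mathbf{U}^{(\infty)})^n|x\rangle=u\frac{dz}{du}\,\frac{1}{z}\sum_{n\ge1}z^n\sum_x\langle x|\tilde W^*(\tilde{\mathbf{P}}^{(\infty)})^n|x\rangle-\frac{2u^2}{1+u^2}c(\tilde W).
\end{align*}
Using $u\,dz/du=z(1-u^2)/(1+u^2)$, this reduces to the displayed symbol identity, applied in position space: the Green's functions satisfy $G_U(y-x)=\frac{1-u^2}{1+u^2}G_P(y-x)+\delta_{x,y}\cdot(\text{const})$, with constants matched after subtracting the $n=0$ terms ($2$ versus $1$). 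Both sides are evaluated against the same finitely supported kernel $\tilde W\delta_x$ and summed over $x$. The $\delta_{x,y}$ part produces exactly $\sum_x\langle x|\tilde W^*|x\rangle=c(\tilde W)$. Absolute convergence for small $u$ follows from Lemma \ref{prop:absolute-summability}, transported by $\mathcal{D}_\infty$; this justifies exchanging the sums over $n$ and $x$ with the resolvent integral.

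\textbf{Step 2: computing $c(\tilde W)$.} Here $c(\tilde W)=\sum_{x=1}^N(\tilde W\delta_x)(x)$, and $(\tilde W\delta_x)(x)$ equals $(W\delta_x)(x)$ up to the diagonal conjugation, which leaves diagonal entries unchanged. For $W=id$ this gives $N$. For $W\ne id$, the support rules \eqref{eq:supp-RL} show that $\mathrm{supp}(W\delta_x)$ lies outside $[1,N]$, so every diagonal entry vanishes and $c(\tilde W)=0$.

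\textbf{Main obstacle.} The delicate point is the precise meaning of the quantum pairing $\mathrm{Tr}_{\mathbb{C}^2}\langle\tilde W\delta_x|(\mathbf{U}^{(\infty)})^n|x\rangle$, in which $\tilde W$ acts only on the position factor. One has to make sure the coefficient function that multiplies the scalar Green's function is the same in both sides, including the $n=0$ normalization. I would handle this by writing everything as $\sum_{x,y}(\tilde W\delta_x)(y)\,G(y-x)$, which makes the term-by-term matching explicit.
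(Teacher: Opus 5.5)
Your proposal is correct and is essentially the paper's proof in differentiated form. The paper compares the logarithmic kernels $F_{x,y}(u)$ and $G_{x,y}(z)$ directly through $1-2z\sqrt{pq}\cos k=(1-4u\sqrt{pq}\cos k+u^{2})/(1+u^{2})$, which yields the diagonal discrepancy $\mathrm{Log}(1+u^{2})\,\delta_{x,y}$ at once, without your resolvent trace, chain rule or $n=0$ bookkeeping; it then evaluates $c(\tilde W)$ by the same support argument you use.

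One correction: for $p\neq q$ the function $\tilde W\delta_x$ is not finitely supported. Already $\tilde{\mathcal M}_0\delta_x$ has the infinite tail $w_m$ at $-x-2m$ (cf.\ Lemma \ref{lem:images}). So the interchange that really needs justification is $\sum_n$ against this infinite $y$-sum, not against the finite $x$-sum. It is harmless for small $|u|$: at step $n$ only $|y-x|\le n$ contributes, while the coefficients grow at most geometrically in $|y|$.
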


\begin{proof}
Let $z = \frac{2u}{1+u^2}$. We first examine the local generating functions of the free walks on $\mathbb{Z}$. By the inverse Fourier transform identity 
of Lemma \ref{FourLemma}, the generating function for the free Szegedy walk is:
\begin{align*}
    F_{x,y}(u) &:= \sum_{n=1}^\infty \frac{u^n}{n} \mathrm{Tr}_{\mathbb{C}^2} \langle y | (\mathbf{U}^{(\infty)})^n | x \rangle \\
    &= - \int_0^{2\pi} e^{ik(y-x)} \mathrm{Log} \det (I_2 - u \widehat{\mathbf{U}^{(\infty)}}(k)) \frac{\mathrm{d}k}{2\pi}\\
    &= - \int_0^{2\pi} e^{ik(y-x)} \mathrm{Log} (1 - 4u\sqrt{pq}\cos k + u^2) \frac{\mathrm{d}k}{2\pi}, 
\end{align*}
where we have used the equation \eqref{detUk}. 

Similarly, for the symmetrized free random walk:
\begin{align*}
    G_{x,y}(z) &:= \sum_{n=1}^\infty \frac{z^n}{n} \langle y | (\tilde{\mathbf{P}}^{(\infty)})^n | x \rangle \\
     &= - \int_0^{2\pi} e^{ik(y-x)} \mathrm{Log} (1 - z \widehat{\tilde{\mathbf{P}}^{(\infty)}}(k) ) \frac{\mathrm{d}k}{2\pi} \\
&= - \int_0^{2\pi} e^{ik(y-x)} \mathrm{Log} (1 - z (e^{-ik} \tilde{\mathbf{P}}^{(\infty)}_{1,0} + e^{ik} \tilde{\mathbf{P}}^{(\infty)}_{-1,0} ) ) \frac{\mathrm{d}k}{2\pi} \\
    &= - \int_0^{2\pi} e^{ik(y-x)} \mathrm{Log} (1 - 2z\sqrt{pq}\cos k) \frac{\mathrm{d}k}{2\pi}.
\end{align*}
Substituting $z = \frac{2u}{1+u^2}$, we observe that $1 - 2z\sqrt{pq}\cos k = \frac{1 - 4u\sqrt{pq}\cos k + u^2}{1+u^2}$. Taking the logarithm algebraically separates the integral:
\begin{align*}
    G_{x,y} \left(\frac{2u}{1+u^2}\right) &= - \int_0^{2\pi} e^{ik(y-x)} \left[ \mathrm{Log} (1 - 4u\sqrt{pq}\cos k + u^2) - \mathrm{Log}(1+u^2) \right] \frac{\mathrm{d}k}{2\pi} \\
    &= F_{x,y}(u) + \mathrm{Log}(1+u^2) \delta_{x,y}.
\end{align*}
Thus, the generating functions exactly match on the off-diagonal terms, isolating the disparity purely to the diagonal: $F_{x,y}(u) = G_{x,y}(z) - \mathrm{Log}(1+u^2) \delta_{x,y}$.

We apply this relation to the local zeta functions. Expanding the bulk trace yields:
\begin{align*}
    \mathrm{Log} \zeta_{\tilde{W},Y^{(N)}} (u) &= \sum_{x=1}^{N} \sum_{y \in \mathbb{Z}} \langle x | \tilde{W}^* | y \rangle F_{x,y}(u) \\
    &= \sum_{x=1}^{N} \sum_{y \in \mathbb{Z}} \langle x| \tilde{W}^* | y \rangle G_{x,y}(z) - \mathrm{Log}(1+u^2) \sum_{x=1}^{N} \langle x | \tilde{W}^* | x \rangle.
\end{align*}
The first term equals to $\mathrm{Log} \zeta_{\tilde{W},\tilde{\mathbf{P}}^{(N)}} (z)$. Exponentiating both sides yields the desired multiplicative factor $(1+u^2)^{-c(\tilde{W})}$.

Finally, we evaluate $c(\tilde{W})$ over the bulk $x \in [1, N]$. Since $\tilde{W}^* = \mathcal{D}_\infty^{-1} W^* \mathcal{D}_\infty$, its diagonal elements are identical to those of the original operator $W^*$:
\begin{align*}
    \langle x | \tilde{W}^* | x \rangle = (q/p)^{x/2} \langle x | W^* | x \rangle (p/q)^{x/2} = \langle x | W^* | x \rangle.
\end{align*}
For the identity $W^* = id$, $\langle x | id | x \rangle = 1$ for all $x \in [1, N]$, yielding $N$. 
Crucially, for the boundary generators, $\langle 0 | \mathcal{M}^*_{0} | 0 \rangle = 1$ and $\langle N+1 | \mathcal{M}^*_{N+1} | N+1 \rangle = 1$. Since the summation is restricted to the interior $x \in [1, N]$, we have $\langle x | \mathcal{M}^*_{0} | x \rangle = 0$ and $\langle x | \mathcal{M}^*_{N+1} | x \rangle = 0$ inside the sum. For any alternating word of length $\geq 2$, the reflection strictly bounds the support away from $x$, ensuring $\langle x | W^* | x \rangle = 0$. Thus, $c(\tilde{W})$ is determined to be non-zero solely for the identity, completing the proof.
\end{proof}

\subsection{The Trace Formula for the Zeta Function of the Szegedy Walk}

\begin{theorem}\label{QWDiscreteTrace}
For $|u|$ sufficiently small, the determinant associated with the Szegedy walk $\mathbf{U}^{(N)}$ on the path $P_N$ admits the following  product expansion over the reflection group $\widetilde{\mathcal{G}}_{0,N+1}$:
\begin{align}\label{QWDiscreteTraceFormulaFixedPoint}
    \det (I_{2N+2} - u \mathbf{U}^{(N)})^{-1} 
    = \zeta_{\partial Y^{(N)}}(u) \prod_{\tilde{W} \in \tilde{\mathcal{G}}_{0,N+1}} \zeta_{\tilde{W}, Y^{(N)}} (u).
\end{align}
Here, 
\begin{align*}
    \zeta_{\partial Y^{(N)}}(u) := \frac{1-u^2}{(1+u^2)^2} \zeta_{\partial X^{(N)}} \left(\frac{2u}{1+u^2}\right).
\end{align*}
\end{theorem}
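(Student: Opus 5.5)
The plan is to chain three results already available: the global Konno--Sato correspondence \eqref{KSforUNPN}, the classical discrete trace formula (Theorem \ref{DiscreteTrace}) evaluated at $z=\frac{2u}{1+u^2}$, and the local Konno--Sato correspondence (Lemma \ref{local-konno-sato}). By \eqref{KSforUNPN},
\begin{align*}
\det(I_{2N+2}-u\mathbf{U}^{(N)})^{-1}=\frac{1-u^2}{(1+u^2)^{N+2}}\det\left(I_{N+2}-z\mathbf{P}^{(N)}\right)^{-1},\qquad z=\frac{2u}{1+u^2}.
\end{align*}
For $|u|$ small, $|z|\le 2|u|/(1-|u|^2)$ is also small, in particular $|z|<R^{-1}$ with $R$ from Lemma \ref{prop:absolute-summability}. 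Hence Theorem \ref{DiscreteTrace} applies at $z$ and gives
\begin{align*}
\det(I_{N+2}-z\mathbf{P}^{(N)})^{-1}=\zeta_{\partial X^{(N)}}(z)\prod_{W\in\mathcal{G}_{0,N+1}}\zeta_{W,X^{(N)}}(z),
\end{align*}
with the product absolutely convergent.

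Next I will rewrite each classical local factor in symmetrized form. Conjugating by $\mathcal{D}_\infty$ gives a bijection $W\mapsto\tilde W=\mathcal{D}_\infty W\mathcal{D}_\infty^{-1}$ of $\mathcal{G}_{0,N+1}$ onto $\tilde{\mathcal{G}}_{0,N+1}$. By the computation in the proof of Lemma \ref{trace-sym}, the diagonal entries agree:
\begin{align*}
\langle x|W^*(\mathbf{P}^{(\infty)})^n|x\rangle=\langle x|\tilde W^*(\tilde{\mathbf{P}}^{(\infty)})^n|x\rangle,
\end{align*}
since the conjugating factors $(q/p)^{x/2}(p/q)^{x/2}$ cancel when $y=x$. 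Summing over $x\in[1,N]$ yields $\zeta_{W,X^{(N)}}(z)=\zeta_{\tilde W,\tilde{\mathbf{P}}^{(N)}}(z)$. Lemma \ref{local-konno-sato} then converts each factor:
\begin{align*}
\zeta_{\tilde W,\tilde{\mathbf{P}}^{(N)}}(z)=(1+u^2)^{c(\tilde W)}\zeta_{\tilde W,Y^{(N)}}(u).
\end{align*}
Because $c(\tilde W)=N$ for $\tilde W=id$ and $0$ otherwise, the product over the group contributes exactly $(1+u^2)^N\prod_{\tilde W}\zeta_{\tilde W,Y^{(N)}}(u)$.

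Substituting back, the factor $(1+u^2)^N$ cancels against $(1+u^2)^{N+2}$. What remains is
\begin{align*}
\frac{1-u^2}{(1+u^2)^2}\,\zeta_{\partial X^{(N)}}(z)\prod_{\tilde W}\zeta_{\tilde W,Y^{(N)}}(u),
\end{align*}
which is the claim with the stated $\zeta_{\partial Y^{(N)}}$.

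The main obstacle is analytic rather than algebraic: justifying that the infinite product of quantum local factors converges and may be rearranged. My approach is to take logarithms. Then $\mathrm{Log}\,\zeta_{\tilde W,Y^{(N)}}(u)=\mathrm{Log}\,\zeta_{\tilde W,\tilde{\mathbf P}^{(N)}}(z)-c(\tilde W)\mathrm{Log}(1+u^2)$, where the second term is nonzero only for the identity. So absolute summability over $\tilde W$ reduces to Lemma \ref{prop:absolute-summability} at $z$, using $\sum_n\frac{|z|^n}{n}\sum_W|\cdots|<\infty$. I also need the definition of $\zeta_{\tilde W,Y^{(N)}}$ via $\langle\tilde W\delta_x|\cdot\rangle$ to agree with the expansion $\sum_{x,y}\langle x|\tilde W^*|y\rangle F_{x,y}(u)$ used in Lemma \ref{local-konno-sato}. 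Since each $\tilde W\delta_x$ is finitely supported, this is a finite rearrangement for each $n$, and the interchange of $n$ and $y$ sums is licensed by the smallness of $u$.
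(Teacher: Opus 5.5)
Your proposal is correct and is essentially the paper's proof: the global Konno--Sato identity \eqref{KSforUNPN}, Theorem~\ref{DiscreteTrace} at $z=2u/(1+u^2)$, identification of the classical and symmetrized bulk factors through the diagonal cancellation in Lemma~\ref{trace-sym}, Lemma~\ref{local-konno-sato} applied factor by factor, and $\sum_{\tilde W}c(\tilde W)=c(id)=N$, which turns $(1+u^2)^{-(N+2)}$ into $(1+u^2)^{-2}$. One small correction to your analytic remark: for $p\neq q$ the functions $\tilde W\delta_x$ attached to the odd words are not finitely supported, because the tails $w_m,v_m$ of Lemma~\ref{lem:images} never vanish; the finiteness of each $n$-th inner sum comes instead from $\mathrm{supp}\,(\mathbf{U}^{(\infty)})^n|x\rangle\subset[x-n,x+n]$, and the interchange of the $n$- and $y$-sums still holds, as you say, because the at most geometric growth of these weights on $[x-n,x+n]$ is absorbed by taking $|u|$ small.
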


\begin{proof}
The proof is established by combining the global Konno-Sato theorem with the bulk-separated discrete trace formula for random walks and the local correspondence lemma.

First, recall the global Konno-Sato correspondence given in \eqref{KSforUNPN}:
\begin{align*}
    \det (I_{2N+2} - u \mathbf{U}^{(N)})^{-1} 
    &= \frac{1-u^2}{(1+u^2)^{N+2}} \det \left( I_{N+2} - \frac{2u}{1+u^2} \mathbf{P}^{(N)} \right)^{-1}.
\end{align*}
Let $z = \frac{2u}{1+u^2}$. By applying the discrete trace formula for the reflected random walk (Theorem \ref{DiscreteTrace}) to the determinant on the right-hand side, we obtain the product expansion where the boundary and bulk are explicitly separated. Since $\tilde{\mathbf{P}}^{(N)}$ and $\mathbf{P}^{(N)}$ are similar, their determinants and bulk traces are identical, allowing us to use the symmetrized group $\tilde{\mathcal{G}}_{0,N+1}$ and its associated bulk zeta functions:
\begin{align*}
    \det (I_{2N+2} - u \mathbf{U}^{(N)})^{-1} &= \frac{1-u^2}{(1+u^2)^{N+2}} \zeta_{\partial X^{(N)}}(z) \prod_{\tilde{W} \in \tilde{\mathcal{G}}_{0,N+1}} \zeta_{\tilde{W}, \tilde{\mathbf{P}}^{(N)}} (z).
\end{align*}

Next, we apply the local Konno-Sato correspondence from Lemma \ref{local-konno-sato} to each bulk factor in the product. Rearranging the formula gives:
\begin{align*}
    \zeta_{\tilde{W}, \tilde{\mathbf{P}}^{(N)}} (z) = (1+u^2)^{c(\tilde{W})} \zeta_{\tilde{W}, Y^{(N)}} (u).
\end{align*}
Substituting this into our product expansion, we have:
\begin{align*}
   & \det (I_{2N+2} - u \mathbf{U}^{(N)})^{-1} \\
   &= \frac{1-u^2}{(1+u^2)^{N+2}} \zeta_{\partial X^{(N)}}(z) \prod_{\tilde{W} \in \tilde{\mathcal{G}}_{0,N+1}} (1+u^2)^{c(\tilde{W})} \zeta_{\tilde{W}, Y^{(N)}} (u) \\
    &= \frac{1-u^2}{(1+u^2)^{N+2}} \zeta_{\partial X^{(N)}}(z) (1+u^2)^{\sum c(\tilde{W})} \prod_{\tilde{W} \in \tilde{\mathcal{G}}_{0,N+1}} \zeta_{\tilde{W}, Y^{(N)}} (u).
\end{align*}

The sum of the diagonal evaluation factors $c(\tilde{W})$ over the entire reflection group is determined by the identity element, as established in Lemma \ref{local-konno-sato}:
\begin{align*}
    \sum_{\tilde{W} \in \tilde{\mathcal{G}}_{0,N+1}} c(\tilde{W}) = c(id) = N.
\end{align*}
Substituting this sum back into the expression, the rational function of $(1+u^2)$ simplifies:
\begin{align*}
    \frac{(1+u^2)^N}{(1+u^2)^{N+2}} = \frac{1}{(1+u^2)^2}.
\end{align*}
Consequently, 
\begin{align*}
    \frac{1-u^2}{(1+u^2)^2} \zeta_{\partial X^{(N)}}(z) = \zeta_{\partial Y^{(N)}}(u).
\end{align*}
Combining these results yields the desired factorized formula \eqref{QWDiscreteTraceFormulaFixedPoint}, completing the proof.
\end{proof}

\begin{rem}\label{rem:identity-is-Mahler}
The identity element alone accounts for the thermodynamic limits of Section
\ref{sec:3}, for the Szegedy walk as well as for the random walk. Let
$F_{x,y}(u)$ and $G_{x,y}(z)$ be the generating functions of the free walks
on $\mathbb{Z}$ introduced in the proof of Lemma \ref{local-konno-sato}.
Comparing $F_{0,0}(u)$ with the definition \eqref{TraceDefinition} of
$\zeta_Y$ gives $\log\zeta_{Y}(u)=F_{0,0}(u)$, while $c(id)=N$ and the
translation invariance of $\mathbf{U}^{(\infty)}$ give
\begin{align*}
    \log\zeta_{id, Y^{(N)}}(u)=\sum_{x=1}^{N}F_{x,x}(u)=N\,F_{0,0}(u).
\end{align*}
Hence, for $|u|$ sufficiently small,
\begin{align*}
    \zeta_{id, Y^{(N)}}(u)=\zeta_{Y}(u)^{N},
\end{align*}
which is the exact quantum counterpart of $\zeta_{id,X^{(N)}}(u)=\zeta_X(u)^N$
in Theorem \ref{DiscreteTrace}; the same computation with $G_{0,0}$ in place
of $F_{0,0}$ gives the latter.

Consequently, raising \eqref{QWDiscreteTraceFormulaFixedPoint} to the power
$1/(N+1)$ and using Theorem \ref{Th3},
\begin{align*}
    \left[\zeta_{\partial Y^{(N)}}(u)
      \prod_{\tilde{W}\neq id}\zeta_{\tilde{W},Y^{(N)}}(u)\right]^{\frac{1}{N+1}}
    =\frac{\bar{\zeta}_{Y^{(N)}}(u)}{\zeta_Y(u)^{\frac{N}{N+1}}}
    \ \longrightarrow\ 1
    \qquad (N\to\infty),
\end{align*}
and likewise in the classical case by Theorem \ref{M-Th3}. In other words,
the Mahler measure representations \eqref{Th1} and \eqref{QWlimzeta_integral}
of the thermodynamic limit are exactly the local zeta function of the
identity element: after the normalization by $1/(N+1)$, every non-trivial
group element and the boundary factor become invisible. The infinite-graph
limit is thus a purely local statement about the free walk, and not a global
spectral one.
\end{rem}

\section{Quantum Poisson Summation Formula}\label{sec:7}

By taking the continuum scaling limit of the discrete trace formula for the Szegedy walk, we obtain the quantum analog of Corollary \ref{cor:poisson}. While the classical random walk corresponds to the heat equation (yielding the Gaussian Poisson summation formula), the quantum walk is governed by unitary wave propagation, which naturally yields the distributional Poisson summation formula known as the Dirac comb identity.

\begin{theorem}[Quantum Poisson Summation Formula (Dirac Comb)]\label{thm:dirac-comb}
For the symmetric reflected Szegedy walk ($p=q=1/2$), the asymptotic evaluation of the discrete trace formula \eqref{QWDiscreteTraceFormulaFixedPoint} under the ballistic continuum limit yields the Dirac comb identity. 
Specifically, scaling $N, n \to \infty$ while maintaining a constant velocity parameter $v := n / (N+1)$, we obtain in the sense of distributions:
\begin{align}\label{QuantumPoissonFormula}
    \sum_{m=-\infty}^\infty e^{i m \pi v} = 2 \sum_{k=-\infty}^\infty \delta(v - 2k).
\end{align}
\end{theorem}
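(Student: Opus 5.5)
The plan is to derive an identity that is exact at every finite $N\ge 2$ by evaluating $\mathrm{Tr}((\mathbf{U}^{(N)})^n)$ in two ways, as in Corollary \ref{cor:poisson}. Then \eqref{QuantumPoissonFormula} follows by pairing both sides with a test function $\varphi\in\mathcal{S}(\mathbb{R})$ and letting $N\to\infty$ along $v=n/(N+1)$.

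\emph{Spectral side.} For $p=q=1/2$ we have $2\sqrt{pq}=1$, so $\varphi_m=m\pi/(N+1)$ in the spectrum quoted in the proof of Theorem \ref{Th3}. Expanding $\mathrm{Log}\det(I-u\mathbf{U}^{(N)})$ as in the corollary following Theorem \ref{Th3} gives
\begin{align*}
\mathrm{Tr}\bigl((\mathbf{U}^{(N)})^n\bigr)=1+(-1)^n+2\sum_{m=1}^N\cos\frac{nm\pi}{N+1}=\sum_{m=-N}^{N+1}e^{im\pi n/(N+1)} .
\end{align*}
The summation runs over a complete residue system modulo $2(N+1)$. Hence it equals $2(N+1)$ if $n\equiv 0 \pmod{2(N+1)}$ and $0$ otherwise.

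\emph{Spatial side.} Here I would use \eqref{QWDiscreteTraceFormulaFixedPoint}, taking logarithms and comparing the coefficients of $u^n/n$. For $p=q$ the coin $2|\phi\rangle\langle\phi|-I_2$ is the swap matrix. Thus $\mathbf{U}^{(\infty)}$ sends $|x\rangle\otimes|\mathrm{L}\rangle\mapsto|x+1\rangle\otimes|\mathrm{L}\rangle$ and $|x\rangle\otimes|\mathrm{R}\rangle\mapsto|x-1\rangle\otimes|\mathrm{R}\rangle$, so the free walk is a pure ballistic translation. Since the group acts by point reflections (Remark \ref{p=q=1/2}), the bulk terms can be read off directly:
\begin{itemize}
\item the identity factor contributes nothing for $n\ge1$;
\item a translation $\mathsf{T}_k$ contributes $2N$ exactly when $n=2|k|(N+1)$;
\item the reflections, together with the boundary factor $\zeta_{\partial Y^{(N)}}$, must supply the remaining $2$ at those same times and cancel otherwise.
\end{itemize}
I would verify this last point by computing $\zeta_{\partial X^{(N)}}(2u/(1+u^2))$ for the folded deterministic dynamics and checking that the $(1-u^2)(1+u^2)^{-2}$ prefactor absorbs the reflection terms. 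The resulting identity,
\begin{align*}
\sum_{m=-N}^{N+1}e^{im\pi v}=2(N+1)\sum_{k\in\mathbb{Z}}\delta_{n,2k(N+1)},\qquad v=\tfrac{n}{N+1},
\end{align*}
is exact at finite $N$. Its right-hand side is visibly the trace formula's translation content, and this exactness is the point emphasized in the introduction.

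\emph{Passage to distributions.} I would multiply the finite identity by $\varphi(n/(N+1))/(N+1)$ and sum over $n\in\mathbb{Z}$; negative $n$ are included by using $\mathbf{U}^{-1}$, or equivalently by the evenness in $n$ of the left side. The right side becomes exactly $2\sum_k\varphi(2k)$. On the left, each $m$ gives the Riemann sum $(N+1)^{-1}\sum_n\varphi(n/(N+1))e^{im\pi n/(N+1)}$. By the (ordinary) Poisson summation formula this equals $\sum_j\widehat{\varphi}(\text{shifts by }2\pi j(N+1))$ evaluated at the frequency $m\pi$. The main obstacle is controlling the aliasing uniformly in $|m|\le N+1$ and showing that the truncation $|m|\le N+1$ costs nothing in the limit. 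I would handle this with the Schwartz decay of $\widehat{\varphi}$. The aliased terms with $j\neq0$ sit at frequencies at least $\pi(N+1)$ away, so their total is $O(N^{-K})$ after summing over $m$. The tail $\sum_{|m|>N}|\widehat{\varphi}(m\pi)|$ also tends to zero. The left side therefore converges to $\sum_m\int\varphi(v)e^{im\pi v}\,dv$, which yields \eqref{QuantumPoissonFormula} in $\mathcal{S}'(\mathbb{R})$.
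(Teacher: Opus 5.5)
Your finite-$N$ identity is correct, and reading it off from the roots-of-unity sum over the complete residue system $\{-N,\dots,N+1\}$ is quicker than anything in the paper. The genuine gap is in the passage to the limit. Tested against $\varphi$, with $\widehat{\varphi}(\xi)=\int\varphi(v)e^{-i\xi v}\,dv$, \eqref{QuantumPoissonFormula} says $\sum_{m}\widehat{\varphi}(m\pi)=2\sum_{k}\varphi(2k)$. That \emph{is} the Poisson summation formula for the lattice $2\mathbb{Z}$. Controlling your Riemann sums by Poisson summation on the lattice $(N+1)^{-1}\mathbb{Z}$ is the same theorem after a rescaling, so the argument presupposes its conclusion. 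Once that is granted, the finite-$N$ identity and the trace formula are superfluous.

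This step must therefore be done without Fourier analysis on the lattice. Put $J_m(N):=\frac{1}{N+1}\sum_{n}\varphi(\frac{n}{N+1})e^{im\pi n/(N+1)}$ and $\theta:=m\pi/(N+1)$. Sum by parts twice, and use $\sum_n|\varphi(\frac{n+1}{N+1})-2\varphi(\frac{n}{N+1})+\varphi(\frac{n-1}{N+1})|\le(N+1)^{-1}\|\varphi''\|_{L^1}$ together with $|1-e^{i\theta}|\ge2|\theta|/\pi$ for $|\theta|\le\pi$. This gives $|J_m(N)|\le\|\varphi''\|_{L^1}/(4m^2)$ for $0<|m|\le N+1$, uniformly in $N$. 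For fixed $m$, the Riemann sum $J_m(N)$ tends to $\int\varphi(v)e^{im\pi v}\,dv$, and dominated convergence in $m$ then yields the left-hand side of \eqref{QuantumPoissonFormula}. This is the paper's argument, carried out on $(0,\infty)$ with $\varphi\in C_c^\infty(0,\infty)$ because the trace formula only produces $n\ge1$. Your extension to $n\le0$ via $\mathbf{U}^{-1}$ comes from the spectrum, not from \eqref{QWDiscreteTraceFormulaFixedPoint}.

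Second, \eqref{QWDiscreteTraceFormulaFixedPoint} never does any work in your proposal, although the theorem asserts that evaluating it produces the comb. The one nontrivial part of the spatial side is that the odd words and $\zeta_{\partial Y^{(N)}}$ together give $2$ when $2(N+1)\mid n$ and $0$ otherwise. You back-solve this from the spectral answer (``must supply'') rather than compute it. The paper computes it. From $\mathrm{Tr}_{\mathbb{C}^2}\langle y|(\mathbf{U}^{(\infty)})^n|x\rangle=\delta_{y,x+n}+\delta_{y,x-n}$ one gets $\sum_k S_{\mathcal{R}_k}(n)=2$ for even $n$ with $2(N+1)\nmid n$ and $0$ otherwise. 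One also gets $\sum_k S_{\mathcal{T}_k}(n)=2N$ for $2(N+1)\mid n$; each of $\mathcal{T}_{k}$ and $\mathcal{T}_{-k}$ contributes $N$, not $2N$ as you wrote. The boundary term is not a deterministic quantity: $\zeta_{\partial X^{(N)}}$ is built from the reflected simple random walk. Using \eqref{TraceFormula00-boundary} together with $\sum_{n\ge1}z^n\mathbb{P}(X_n=y\mid X_0=0)=\frac{1+u^2}{1-u^2}u^{|y|}$ for $y\neq0$, $z=2u/(1+u^2)$, one finds $B_Y(n)=2$, $-2$, $0$ according as $2(N+1)\mid n$, $n$ is even otherwise, or $n$ is odd. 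Only with this computation does the exact cancellation of reflections against the boundary anomaly, and hence the Dirac comb, come out of the trace formula.
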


\begin{proof}
We define a sequence of discrete distributions $\nu_N \in \mathcal{D}'(0, \infty)$ associated with the trace of the transition matrix by scaling the discrete time $n$ with the velocity parameter $v = n/(N+1)$:
\begin{align*}
    \nu_N := \frac{1}{N+1} \sum_{n=1}^\infty \mathrm{Tr}((\mathbf{U}^{(N)})^n) \delta_{n/(N+1)},
\end{align*}
where $\delta_{v_0}$ is the Dirac delta distribution centered at $v_0$. We evaluate the limit of $\nu_N$ as $N \to \infty$ for any test function $\phi \in C_c^\infty(0, \infty)$ in two distinct ways.

First, we use the explicit spectral decomposition. For $p=q=1/2$, the eigenvalues of $\mathbf{U}^{(N)}$ are exactly $\mu_0 = 1$, $\mu_{N+1} = -1$, and $\mu_{\pm m} = e^{\pm i \frac{m\pi}{N+1}}$ for $1 \le m \le N$.
The trace is expressed as:
\begin{align*}
    \mathrm{Tr} ((\mathbf{U}^{(N)})^n) &= 
     1 + (-1)^n + 2 \sum_{m=1}^N \cos \left( n \frac{m \pi}{N+1} \right) = \sum_{m=0}^{2N+1} e^{i n \frac{m \pi}{N+1}}. \\
\end{align*}
Evaluating the measure $\nu_N$ against the test function $\phi$, we obtain:
\begin{align*}
\begin{aligned}
     \langle \nu_N, \phi \rangle &= \frac{1}{N+1} \sum_{n=1}^\infty \sum_{m=0}^{2N+1} e^{i n \frac{m\pi}{N+1}} \phi\left(\frac{n}{N+1}\right) \\
     &= \sum_{m=-N}^{N+1} \left( \frac{1}{N+1} \sum_{n=1}^\infty e^{i m \pi \frac{n}{N+1}} \phi\left(\frac{n}{N+1}\right) \right) \\
     &= \sum_{m \in \mathbb{Z} } 
     J_m (N)
\end{aligned}
    \end{align*}
where 
\begin{equation*}
    J_m(N) := 1_{[-N,N+1]}(m) \frac{1}{N+1} \sum_{n=1}^\infty e^{i m \pi \frac{n}{N+1}} \phi\left(\frac{n}{N+1}\right)
\end{equation*}
for $ m \in \mathbb{Z} $ and $ N \in \mathbb{N} $. 
For any fixed integer $m$, $J_m(N)$ 
for $ N > |m| $ is a Riemann sum for the integral of $f_m(v) = e^{i m \pi v} \phi(v)$ over the interval $[0, \infty)$ with step size $\Delta v = \frac{1}{N+1}$. 
Since $\phi \in C^\infty_c(0, \infty)$, it is Riemann integrable, and we have the pointwise limit:
\begin{equation*}
    \lim_{N \to \infty} J_m(N) = \int_0^\infty e^{i m \pi v} \phi(v) \, dv,
\end{equation*}
and also 
\begin{align*}
    |J_m (N)| \leq \frac{C}{m^2} \Vert \phi'' \Vert_{L^1(0,\infty)}
\end{align*}
for $ m \ne 0 $. 
Then, by the dominated convergence theorem, we have 
\begin{align*}
    \lim_{N \to \infty} \langle \nu_N, \phi \rangle = \sum_{m=-\infty}^\infty \int_0^\infty e^{i m \pi v} \phi(v) dv.
\end{align*}
This means $ \lim_{N \to \infty} \nu_N =\sum_{m=-\infty}^\infty e^{i m \pi v}$
in $\mathcal{D}'(0, \infty) $.

Second, we evaluate the identical trace using the spatial product expansion \eqref{QWDiscreteTraceFormulaFixedPoint}. 
Applying the formal logarithmic derivative operator $u \frac{d}{du} \mathrm{Log}$ to both sides of the exact determinant formula yields a generating function for the traces:
\begin{align*}
    \sum_{n=1}^\infty u^n \mathrm{Tr} ((\mathbf{U}^{(N)})^n) 
    = u \frac{d}{du} \mathrm{Log} \, \zeta_{\partial Y^{(N)}}(u) 
    + \sum_{\tilde{W} \in \tilde{\mathcal{G}}_{0,N+1}} u \frac{d}{du} \mathrm{Log} \, \zeta_{\tilde{W}, Y^{(N)}}(u).
\end{align*}
By extracting the exact coefficient of $u^n$ on both sides, we obtain the following spatial trace partition:
\begin{align}\label{SpatialPartitionQW}
    \mathrm{Tr} ((\mathbf{U}^{(N)})^n) = B_Y(n) + \sum_{k=-\infty}^\infty S_{\mathcal{T}_k}(n) + \sum_{k=-\infty}^\infty S_{R_k}(n).
\end{align}
Here, the boundary anomaly trace $B_Y(n)$ is uniquely defined as the $n$-th coefficient of the logarithmic derivative of the quantum boundary zeta function:
\begin{align*}
    \sum_{n=1}^\infty B_Y(n) u^n := u \frac{d}{du} \mathrm{Log} \left[ \frac{1-u^2}{(1+u^2)^2} \zeta_{\partial X^{(N)}} \left(\frac{2u}{1+u^2}\right) \right].
\end{align*}
The terms $ S_{\mathcal{T}_k}(n) $
and $ S_{\mathcal{R}_k}(n) $
for the translation elements $\mathcal{T}_k x = x + 2k(N+1)$ and the reflection elements $\mathcal{R}_k x = -x + 2k(N+1)$ for $k \in \mathbb{Z}$ that form the group $\tilde{\mathcal{G}}_{0,N+1}$ are 
expressed via $\mathbf{U}^{(\infty)}$
using the definition of the local bulk zeta function $\zeta_{\tilde{W}, Y^{(N)}}$ as:
\begin{align}\label{STSR}
    S_{\mathcal{T}_k}(n) &:= \sum_{x=1}^{N} \mathrm{Tr}_{\mathbb{C}^2} \langle \mathcal{T}_k x | (\mathbf{U}^{(\infty)})^n | x \rangle, \\
    S_{\mathcal{R}_k}(n) &:= \sum_{x=1}^{N} \mathrm{Tr}_{\mathbb{C}^2} \langle \mathcal{R}_k x | (\mathbf{U}^{(\infty)})^n | x \rangle.
\end{align}

Since 
when $p=q=1/2$ 
\begin{align*}
    \widehat{\mathbf{U}^{\infty}}(k) = \begin{pmatrix} 0 & e^{ik} \\ e^{-ik} & 0 \end{pmatrix} \begin{pmatrix} 0 & 1 \\ 1 & 0 \end{pmatrix} = \begin{pmatrix} e^{ik} & 0 \\ 0 & e^{-ik} \end{pmatrix}, 
\end{align*}
we have, by the inverse Fourier transform 
\eqref{InvFourier0}, 
\begin{align}\label{k-delta}
    \mathrm{Tr}_{\mathbb{C}^2} \langle y | (\mathbf{U}^{(\infty)})^n | x \rangle = \delta_{y, x+n} + \delta_{y, x-n},
\end{align}
where $\delta$ denotes the Kronecker delta.
By substituting 
\eqref{k-delta} to \eqref{STSR},
we obtain 
\begin{align*}
    \sum_{k=-\infty}^\infty S_{\mathcal{T}_k}(n) = \begin{cases}
        2N & \text{if } n \equiv 0 \pmod{2(N+1)}, \\
        0 & \text{otherwise},
    \end{cases}
\end{align*}
and 
\begin{align*}
    \sum_{k=-\infty}^\infty S_{\mathcal{R}_k}(n) = \begin{cases}
        0 & \text{if } n \text{ is odd, or } n \equiv 0 \pmod{2(N+1)}, \\
        2 & \text{if } n \text{ is even and } n \not\equiv 0 \pmod{2(N+1)}.
    \end{cases}
\end{align*}

Next, we directly calculate the boundary anomaly trace $B_Y(n)$ without relying on the total spectral trace. 
Let $z = \frac{2u}{1+u^2}$. 
By \eqref{TraceFormula00-boundary} applied at $x=y=0$ and at $x=y=N+1$, we obtain  
$$z \frac{d}{dz} \mathrm{Log} \, \zeta_{\partial X^{(N)}}(z) = 2 \sum_{k=-\infty}^\infty \sum_{n=1}^\infty z^n \mathbb{P}(X_n = 2k(N+1) \mid 0). $$
Using the free random walk generating function
$$\sum_{n=1}^\infty z^n \mathbb{P}(X_n = y) = \frac{1+u^2}{1-u^2} u^{|y|}$$ 
for $y \neq 0$, and $\frac{2u^2}{1-u^2}$ for $y=0$, we evaluate this classical term exactly:
\begin{align*}
    z \frac{d}{dz} \mathrm{Log} \, \zeta_{\partial X^{(N)}}(z) = \frac{4u^2}{1-u^2} + \frac{4(1+u^2)}{1-u^2} \sum_{k=1}^\infty u^{2k(N+1)}.
\end{align*}
Applying the chain rule $u \frac{dz}{du} = z \frac{1-u^2}{1+u^2}$ to the definition of $B_Y(n)$, we obtain 
\begin{align*}
    \sum_{n=1}^\infty B_Y(n) u^n &= u \frac{d}{du} \mathrm{Log} \left[ \frac{1-u^2}{(1+u^2)^2} \right] + \frac{1-u^2}{1+u^2} \left( z \frac{d}{dz} \mathrm{Log} \, \zeta_{\partial X^{(N)}}(z) \right) \\
    &= \left( \frac{-2u^2}{1-u^2} - \frac{4u^2}{1+u^2} \right) + \frac{1-u^2}{1+u^2} \left( \frac{4u^2}{1-u^2} + \frac{4(1+u^2)}{1-u^2} \sum_{k=1}^\infty u^{2k(N+1)} \right) \\
    &= \frac{-2u^2}{1-u^2} + 4 \sum_{k=1}^\infty u^{2k(N+1)}.
\end{align*}
By extracting the coefficients of $u^n$, we can determine $B_Y(n)$:
\begin{align*}
    B_Y(n) = \begin{cases}
        -2 + 4 = 2 & \text{if } n \equiv 0 \pmod{2(N+1)}, \\
        -2 & \text{if } n \text{ is even and } n \not\equiv 0 \pmod{2(N+1)}, \\
        0 & \text{if } n \text{ is odd}.
    \end{cases}
\end{align*}

Adding the three components $B_Y(n)$, $S_{\mathcal{T}}(n)$, and $S_{\mathcal{R}}(n)$ back into the spatial trace partition \eqref{SpatialPartitionQW},
we obtain 
\begin{align*}
    \mathrm{Tr} ((\mathbf{U}^{(N)})^n) = \begin{cases}
        2N + 2 & \text{if } n \equiv 0 \pmod{2(N+1)}, \\
        0 & \text{otherwise}.
    \end{cases}
\end{align*}
Applying the discrete measure $\nu_N$ to a test function $\phi \in C_c^\infty(0, \infty)$, we substitute this exact expansion:
\begin{align*}
    \langle \nu_N, \phi \rangle
    &= \frac{1}{N+1} \sum_{k=1}^\infty \left( 2N + 2 \right) \phi\left( \frac{2k(N+1)}{N+1} \right) = 2 \sum_{k=1}^\infty \phi(2k).
\end{align*}
Because this equality holds strictly for all $N \ge 2$, it trivially passes to the continuum limit:
\begin{align*}
    \lim_{N \to \infty} \langle \nu_N, \phi \rangle = 2 \sum_{k=1}^\infty \phi(2k) = \left\langle 2 \sum_{k=-\infty}^\infty \delta(v - 2k), \phi \right\rangle.
\end{align*}
This establishes the continuous Dirac comb identity \eqref{QuantumPoissonFormula}.
\end{proof}

\begin{rem}[Heat Kernel vs. Wave Kernel]
This result contrasts with the random walk case. 
The symmetric reflected random walk corresponds to a diffusive limit ($t = n/(N+1)^2$), converging to the fundamental solution of the Heat equation on $S^1$ and thereby recovering the classic Gaussian Poisson summation formula. Conversely, the quantum walk maintains unitarity without dissipation; its ballistic scaling ($v = n/(N+1)$) maps directly to the Wave equation on $S^1$. The emergence of the Dirac comb identity demonstrates that the continuous Heat and Wave kernel trace formulas are simply distinct scaling limits of the same universal discrete trace formula over the reflection group.

The wave kernel $e^{it\sqrt{-\Delta}}$ is the fundamental solution of the wave equation $\partial_t^2 u = \Delta u$ and plays a central role in the spectral geometry of manifolds. While the heat kernel describes the dissipation of probability mass, the wave kernel represents the coherent propagation of waves. A celebrated result in this field is the Poisson relation for the wave trace \cite{Duistermaat1975, Chavel1984}, which states that the trace of the wave kernel is a distribution whose singular support is contained within the set of lengths of periodic geodesics. In our context, the symmetric Szegedy walk behaves as a discrete-time wave propagator. 
Its ballistic scaling limit  recovers the wave trace on a circle, where the ``singular support" manifests as the Dirac comb at the resonance points $v=2k$. This connection emphasizes that the Konno-Sato correspondence is a discrete manifestation of the duality between the eigenvalue spectrum and the length spectrum in Riemannian geometry.
\end{rem}

\section{Double Scaling Limit and a Local Zeta Factorization of the Massive Dirac-Type Trace Formula}\label{sec:8}

In this section, we consider a \textit{double scaling limit} where the spatial asymmetry (drift) and the spectral parameter are scaled simultaneously with the system size $N$.

\subsection{Scaling Hypotheses and Notation}\label{sec:8.1}

We define the effective mass $M \in \mathbb{R}$ and the continuous spectral parameter $s \in \mathbb{C}$ ($\mathrm{Re}(s) > 0$) by scaling the transition probabilities and the spectral variable $u$ as follows:
\begin{equation}\label{eq:scaling}
    p = \frac{1}{2} + \frac{M}{2N}, \quad q = \frac{1}{2} - \frac{M}{2N}, \quad u = e^{-s/N}.
\end{equation}
In this regime, the mass gap $\Delta = \arccos(2\sqrt{pq})$ scales as $O(M/N)$, which allows the boundary reflections to remain coherent even as $N \to \infty$.

Throughout this section we use the following notation.
As in Section \ref{sec:5} we write
$ \alpha := p/q $, 
and, as in the Konno--Sato correspondence \eqref{KSforUNPN},
\begin{align}\label{eq:z-and-xi}
    z := \frac{2u}{1+u^2}, \qquad
    \xi_N := \frac{1+u^2}{4u\sqrt{pq}}, \qquad \text{so that} \quad
    2z\sqrt{pq} = \xi_N^{-1}.
\end{align}
We let $\lambda_\pm := \xi_N \pm \sqrt{\xi_N^2-1}$ be the characteristic roots attached to $\xi_N$, so that
\begin{align}\label{eq:lambda-basic}
    \lambda_+\lambda_- = 1, \qquad \lambda_+ + \lambda_- = 2\xi_N,
\end{align}
the branch being fixed by $\lambda_- = e^{-\gamma_N}$ with $\gamma_N := \mathrm{arcosh}(\xi_N)$, $\mathrm{Re}(\gamma_N)>0$; in particular $|\lambda_-| < 1$.
Finally, for the massive momentum we write
\begin{align}\label{eq:varkappa}
    \varkappa := \sqrt{s^2+M^2}, \qquad \mathrm{Re}(\varkappa) > 0
\end{align}
(principal branch), so that $\varkappa^2 - M^2 = s^2$.

Note that \eqref{eq:scaling} requires $N>|M|$, which we assume from now
on; then $p,q\in(0,1)$ and $2\sqrt{pq}=\sqrt{1-M^2/N^2}\in(0,1]$.

Finally we fix the region of the spectral parameter in which the double
scaling limit will be taken. Put
\begin{align}\label{eq:SM}
    \mathcal{S}_M:=\bigl\{\, s\in\mathbb{C}\ :\
      \mathrm{Re}(s)>0 \ \text{ and }\ \mathrm{Re}(\varkappa)>|M| \,\bigr\}.
\end{align}
The first condition is what makes $|u|=|e^{-s/N}|<1$; the second one is
exactly the condition under which the local zeta functions of the odd
words converge, as will be seen in Step 2 of the proof of Theorem
\ref{thm:double_scaling}. Writing $s=\sigma+i\omega$ and using
$\mathrm{Re}(\varkappa)^2-\mathrm{Im}(\varkappa)^2=\sigma^2-\omega^2+M^2$
together with $\mathrm{Re}(\varkappa)\,\mathrm{Im}(\varkappa)=\sigma\omega$,
one checks that, for $M\neq0$,
\begin{align*}
    \mathcal{S}_M
    =\bigl\{\,\sigma+i\omega\ :\ \sigma>0,\ \
       \sigma^{2}\left(M^{2}+\omega^{2}\right)>M^{2}\omega^{2}\,\bigr\},
\end{align*}
equivalently $\sigma^{-2}<\omega^{-2}+M^{-2}$ when $\omega\neq0$, while
$\mathcal{S}_0=\{\mathrm{Re}(s)>0\}$. Thus $\mathcal{S}_M$ is an open
connected set which contains the half plane $\{\mathrm{Re}(s)>|M|\}$ and
the positive real axis, whose boundary is asymptotic to the line
$\mathrm{Re}(s)=|M|$ as $|\mathrm{Im}(s)|\to\infty$, and which increases
to the whole half plane $\{\mathrm{Re}(s)>0\}$ as $M\to0$.

We shall also use the following elementary description of $\lambda_-$ as a
function of $u$. Let $J(v):=\frac12\left(v+v^{-1}\right)$ be the Joukowski
map, which is a biholomorphism of the punctured disc
$\mathbb{D}^{*}=\{0<|v|<1\}$ onto $\mathbb{C}\setminus[-1,1]$. By
\eqref{eq:z-and-xi} we have $\xi_N=a\,J(u)$ with
$a:=(2\sqrt{pq})^{-1}\geq1$, and by \eqref{eq:lambda-basic}
$J(\lambda_-)=\xi_N$ with $|\lambda_-|<1$; hence
\begin{align*}
    \lambda_-=J^{-1}\!\left(a\,J(u)\right),\qquad 0<|u|<1 ,
\end{align*}
so that $\lambda_-$ is a biholomorphism of $\mathbb{D}^{*}$ onto
$\mathbb{D}^{*}\setminus\bigl([\varrho,1)\cup(-1,-\varrho]\bigr)$, where
\begin{align}\label{eq:rho-slit}
    \varrho:=J^{-1}(a)=a-\sqrt{a^{2}-1}
    =\frac{1-|p-q|}{2\sqrt{pq}}=\min(\alpha,\alpha^{-1})^{1/2}.
\end{align}
In particular $\xi_N\notin[-1,1]$, so $\gamma_N=\mathrm{arcosh}(\xi_N)$ is
well defined, and $\lambda_-$ is holomorphic and injective on
$\mathbb{D}^{*}$.

Finally, we single out the subdomain of $\mathbb{D}^{*}$ on which the local
zeta functions of the odd words will be shown to converge:
\begin{align}\label{eq:OmegaN}
    \Omega_N:=\bigl\{\,u\in\mathbb{C}\ :\ 0<|u|<1,\ \
      |\lambda_-(u)|^{2}<\min(\alpha,\alpha^{-1})\,\bigr\}
    =\lambda_-^{-1}\bigl(\{0<|v|<\varrho\}\bigr).
\end{align}
The second equality is \eqref{eq:rho-slit}, and $\{0<|v|<\varrho\}$ is
contained in the image $\mathbb{D}^{*}\setminus\bigl([\varrho,1)\cup(-1,-\varrho]\bigr)$
of $\lambda_-$; since $\lambda_-$ is injective and holomorphic on
$\mathbb{D}^{*}$, the set $\Omega_N$ is a connected open set, and it contains
a punctured neighbourhood of $u=0$ because $\lambda_-(u)\to0$ as $u\to0$.
Note that $\Omega_N$ depends only on $p$ and $q$, hence on $N$ through
\eqref{eq:scaling}, and that $\Omega_N=\mathbb{D}^{*}$ precisely when $p=q$,
in which case $\varrho=1$ and the two slits degenerate. For $p\neq q$ the
inclusion $\Omega_N\subsetneq\mathbb{D}^{*}$ is strict, but $\Omega_N$ always
contains the real punctured interval $(-1,1)\setminus\{0\}$, since
$\lambda_-$ maps it onto $(-\varrho,\varrho)\setminus\{0\}$ with
$\lambda_-(u)\uparrow\varrho$ only as $u\uparrow1$.

\subsection{Global and Local Zeta Functions in the Limit}\label{sec:8.2}

To establish a continuum Walk/Zeta correspondence, we first justify the geometric definition of the global zeta function in the continuous regime.
In classical spectral geometry, the Selberg or Ruelle-type dynamical zeta function for a manifold is defined via an Euler product over all primitive periodic orbits (prime geodesics) $\gamma$:
\begin{align*}
    Z(s) = \prod_{\gamma} \left( 1 - e^{-s \cdot l(\gamma)} \right)^{-1},
\end{align*}
where $l(\gamma)$ denotes the geometric length of the orbit.

In our macroscopic continuum limit, the discrete path graph collapses onto a one-dimensional orbifold (an interval with reflecting boundaries). This geometry possesses exactly one unique primitive periodic orbit $\gamma_0$, which corresponds to a single unoriented complete round trip between the boundaries, yielding the macroscopic length $l(\gamma_0) = 2$. Furthermore, for a relativistic field with an effective mass $M$, the standard wave propagation parameter $s$ is altered by the dispersion relation into the massive momentum parameter $\varkappa=\sqrt{s^2+M^2}$.
Thus,
for the massive one-dimensional system, we naturally obtain the definition of the continuous global zeta function:
\begin{align*}
    Z^{\mathrm{cont}}(s, M) := \left( 1 - e^{-l(\gamma_0)\varkappa} \right)^{-1} = \left( 1 - e^{-2\sqrt{s^2+M^2}} \right)^{-1}.
\end{align*}
We refer to this as a \emph{massive Dirac-type} zeta function because the spectral parameter $s$ is replaced by
$\varkappa=\sqrt{s^2+M^2}$, 
which is the Euclidean massive relativistic dispersion relation. 
In the present scaling limit, the two conjugate spectral branches of the Szegedy walk play the role of the positive- and negative-energy branches of a one-dimensional Dirac system, while $M$ opens a spectral gap between them. 
Accordingly, a traversal of length $l$ carries the propagation factor $ e^{-l\varkappa}$, and the primitive round trip $\gamma_0$, of length $l(\gamma_0)=2$, produces the Euler factor displayed above. When $M=0$, one has $\varkappa=s$, and the factor reduces to the corresponding massless one. 
The qualifier ``Dirac-type'' is introduced here at the level of
the dispersion relation and the resulting periodic-orbit factor.
It is justified a posteriori in Section \ref{sec:8.6}, where the
limit is identified with the zeta-regularized determinant of
$\mathcal{D}^{2}+s^{2}$ on $[0,1]$ with Dirichlet ends.

Further, by observing
\begin{align*}
    \log Z^{\mathrm{cont}}(s, M) = -\log\left(1 - e^{-2\varkappa}\right) = \sum_{k=1}^\infty \frac{e^{-2k\varkappa}}{k},
\end{align*}
we define the local zeta function of our massive one-dimensional system by
\begin{align}\label{eq:continuous_local_zeta}
\zeta_{\gamma_0^k}^{\mathrm{cont}}(s, M) := \exp \left( \frac{e^{-k l(\gamma_0) \varkappa}}{k} \right) = \exp \left( \frac{e^{-2k \sqrt{s^2 + M^2}}}{k} \right).
\end{align}
This definition
is justified by
Theorem \ref{thm:double_scaling} below,
where $\zeta_{\gamma_0^k}^{\mathrm{cont}}(s, M) $ is obtained as the limit of a local zeta function of the Szegedy walk.

\subsection{The Reflection Group and its Word Decomposition}

As established in Section \ref{sec:4}, the generalized reflection operators $\tilde{\mathcal{M}}_0$ and $\tilde{\mathcal{M}}_{N+1}$ are involutions ($\tilde{\mathcal{M}}_0^2 = \tilde{\mathcal{M}}_{N+1}^2 = id$) acting on the state space. Thus, for any $p, q \in (0,1)$, the group $\tilde{\mathcal{G}}_{0,N+1} = \langle \tilde{\mathcal{M}}_0, \tilde{\mathcal{M}}_{N+1} \rangle$ is algebraically isomorphic to the infinite dihedral group $\mathrm{D}_\infty$. 
We classify its elements by their alternating word structure:
\begin{enumerate}
    \item \textbf{Translations (Even-length words):} For $k \in \mathbb{N}$, we set
    \begin{align*}
        \tilde{\mathcal{T}}_k := (\tilde{\mathcal{M}}_{N+1}\tilde{\mathcal{M}}_0)^k, \qquad
        \tilde{\mathcal{T}}_{-k} := (\tilde{\mathcal{M}}_0\tilde{\mathcal{M}}_{N+1})^k ,
    \end{align*}
    and $\tilde{\mathcal{T}}_0 := id$. These elements algebraically represent discrete cycles bouncing between the two boundaries $k$ times. We write $\tilde{\mathfrak{T}} := \{ \tilde{\mathcal{T}}_k : k \in \mathbb{Z}\setminus\{0\} \}$.
    \item \textbf{Reflections (Odd-length words):} For $k \in \mathbb{Z}_{\ge 0}$, we set
    \begin{align*}
        \tilde{\mathcal{R}}_{0,k} := \tilde{\mathcal{M}}_0 (\tilde{\mathcal{M}}_{N+1}\tilde{\mathcal{M}}_0)^k, \qquad
        \tilde{\mathcal{R}}_{N+1,k} := \tilde{\mathcal{M}}_{N+1} (\tilde{\mathcal{M}}_0\tilde{\mathcal{M}}_{N+1})^k,
    \end{align*}
    and let $\tilde{\mathfrak{R}}$ denote the set of all such elements.
\end{enumerate}
Every element of $\tilde{\mathcal{G}}_{0,N+1}$ belongs to exactly one of $\{id\}$, $\tilde{\mathfrak{T}}$ and $\tilde{\mathfrak{R}}$, so that the trace formula \eqref{QWDiscreteTraceFormulaFixedPoint} of Section \ref{sec:6} reads
\begin{align}\label{eq:geometric_decomposition_proof}
\begin{aligned}
   & \det(I_{2N+2} - u\mathbf{U}^{(N)})^{-1} \\
    &= \zeta_{\partial Y^{(N)}}(u)\, \zeta_{id, Y^{(N)}}(u) \left( \prod_{\tilde{\mathcal{R}} \in \tilde{\mathfrak{R}}} \zeta_{\tilde{\mathcal{R}}, Y^{(N)}}(u) \right) \left( \prod_{k=1}^\infty \zeta_{\tilde{\mathcal{T}}_k, Y^{(N)}}(u) \zeta_{\tilde{\mathcal{T}}_{-k}, Y^{(N)}}(u) \right).
\end{aligned}
\end{align}

\subsection{Exact Evaluation of the Local Zeta Functions}

The lemmas of this subsection evaluate every factor of \eqref{eq:geometric_decomposition_proof}, except the boundary factor $\zeta_{\partial Y^{(N)}}(u)$, in closed form. 
All the identities below are exact and do \emph{not} presuppose the scaling
\eqref{eq:scaling}: Lemmas \ref{lem:freekernel}, \ref{lem:images} and
\ref{lem:translations} hold for every $N\ge2$, every $p,q\in(0,1)$ with
$p+q=1$ and every $u$ with $0<|u|<1$, while Lemma \ref{lem:reflections} requires in addition $u\in\Omega_N$, the
domain \eqref{eq:OmegaN} of Section \ref{sec:8.1}, which for $p\neq q$ is a
proper subdomain of $\{0<|u|<1\}$.

It is convenient to introduce the logarithmic resolvent of the symmetrized free random walk,
\begin{align*}
    \mathbf{L}(z) := \sum_{n=1}^\infty \frac{z^n}{n} \bigl( \tilde{\mathbf{P}}^{(\infty)} \bigr)^n ,
\end{align*}
so that, by the definition of the local bulk zeta functions in Section \ref{sec:6},
\begin{align}\label{eq:zeta-as-kernel}
    \log \zeta_{\tilde{W}, \tilde{\mathbf{P}}^{(N)}}(z)
    = \sum_{x=1}^{N} \langle x | \tilde{W}^* \mathbf{L}(z) | x \rangle
    = \sum_{x=1}^{N} \sum_{y \in \mathbb{Z}} \langle x | \tilde{W}^* | y \rangle \, \langle y | \mathbf{L}(z) | x \rangle ,
    \,\, \tilde{W} \in \tilde{\mathcal{G}}_{0,N+1}.
\end{align}

\begin{lemma}[Free logarithmic kernel]\label{lem:freekernel}
For $|z|$ sufficiently small,
\begin{align}\label{eq:freekernel}
    \langle y | \mathbf{L}(z) | x \rangle =
    \begin{cases}
        \dfrac{\lambda_-^{\,|x-y|}}{|x-y|}, & x \neq y, \\[2mm]
        \log\bigl( 1 + \lambda_-^2 \bigr), & x = y .
    \end{cases}
\end{align}
\end{lemma}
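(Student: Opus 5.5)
We compute $\mathbf{L}(z)$ by Fourier analysis, exactly as for $G_{x,y}$ in the proof of Lemma \ref{local-konno-sato}. Since $\tilde{\mathbf{P}}^{(\infty)}$ is translation-invariant with symbol $2\sqrt{pq}\cos k$, Lemma \ref{FourLemma} gives, for $|z|$ small enough that the series in $n$ may be exchanged with the $k$-integral,
\begin{align*}
    \langle y|\mathbf{L}(z)|x\rangle
    = -\int_0^{2\pi} e^{ik(y-x)}\,\mathrm{Log}\bigl(1-2z\sqrt{pq}\cos k\bigr)\frac{dk}{2\pi}.
\end{align*}
By \eqref{eq:z-and-xi}, $2z\sqrt{pq}=\xi_N^{-1}$. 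Using \eqref{eq:lambda-basic} we write $2\xi_N=\lambda_-+\lambda_-^{-1}$, which gives the factorization
\begin{align*}
    1-\xi_N^{-1}\cos k
    = \frac{(1-\lambda_- e^{ik})(1-\lambda_- e^{-ik})}{1+\lambda_-^2}.
\end{align*}
This can be checked by expanding the right-hand side: it equals $\bigl(1+\lambda_-^2-2\lambda_-\cos k\bigr)/(1+\lambda_-^2)$, and $2\lambda_-/(1+\lambda_-^2)=1/\xi_N$.

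For $|z|$ small, $\xi_N$ is large, so $|\lambda_-|\approx 1/(2\xi_N)$ is small. In particular $|\lambda_-|<1$ and $|\lambda_-^2|<1$. The logarithm then splits on the principal branch (all three factors are close to $1$):
\begin{align*}
    \mathrm{Log}(1-\xi_N^{-1}\cos k)
    = \mathrm{Log}(1-\lambda_-e^{ik})+\mathrm{Log}(1-\lambda_-e^{-ik})-\mathrm{Log}(1+\lambda_-^2).
\end{align*}
Expanding each term,
\begin{align*}
    -\mathrm{Log}(1-\lambda_-e^{\pm ik})=\sum_{m\ge1}\frac{\lambda_-^m e^{\pm imk}}{m}.
\end{align*}
These series converge uniformly in $k$, so we may integrate term by term against $e^{ik(y-x)}$ using orthogonality of the exponentials. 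The integral picks out $m=|x-y|$ from exactly one of the two series when $x\neq y$, giving $\lambda_-^{|x-y|}/|x-y|$. When $x=y$, neither series contributes a constant term, and only $+\mathrm{Log}(1+\lambda_-^2)$ survives. This yields \eqref{eq:freekernel}.

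The only delicate point is justifying the branch choices and interchanges; everything else is algebra. First, the identity $\sum_n \frac{z^n}{n}\langle y|(\tilde{\mathbf{P}}^{(\infty)})^n|x\rangle = -\int e^{ik(y-x)}\mathrm{Log}(\cdots)$ needs $|2z\sqrt{pq}|<1$, which holds for small $|z|$. Second, the splitting of $\mathrm{Log}$ into three terms holds because each factor lies in a small disc around $1$ when $|\lambda_-|$ is small. Third, we need the correct root: $\lambda_-=e^{-\gamma_N}$ is the one with $|\lambda_-|<1$, so $\lambda_-\to 0$ as $z\to0$. This follows from the Joukowski description in Section \ref{sec:8.1}, which gives $\lambda_-=J^{-1}(aJ(u))$ with $u\to0$ as $z\to0$.
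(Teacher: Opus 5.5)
Your proof is correct and follows the paper's own argument: the Fourier representation through the symbol $2\sqrt{pq}\cos k$, the factorization $1-\xi_N^{-1}\cos k=(1-\lambda_-e^{ik})(1-\lambda_-e^{-ik})/(1+\lambda_-^2)$, and termwise integration of the two logarithmic series. Your extra care with branches and interchanges is sound. The appeal to the Joukowski map is not actually needed: $\lambda_-$ depends on $z$ only through $\xi_N=(2z\sqrt{pq})^{-1}$, which is large when $|z|$ is small.
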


\begin{proof}
The operator $\tilde{\mathbf{P}}^{(\infty)}$ is translation invariant with symbol $\widehat{\tilde{\mathbf{P}}^{(\infty)}}(k) = 2\sqrt{pq}\cos k$, so that, exactly as in the proof of Lemma \ref{local-konno-sato},
\begin{align*}
    \langle y | \mathbf{L}(z) | x \rangle
    = - \int_0^{2\pi} e^{ik(y-x)}\, \mathrm{Log}\bigl( 1 - \xi_N^{-1}\cos k \bigr) \frac{\mathrm{d}k}{2\pi},
\end{align*}
where we used \eqref{eq:z-and-xi}.
Writing $1 - \xi_N^{-1}\cos k = -\frac{1}{2\xi_N} e^{-ik}\bigl( e^{2ik} - 2\xi_N e^{ik} + 1 \bigr)$ and factoring the quadratic, whose roots are $\lambda_-$ and $\lambda_-^{-1} = \lambda_+$ by \eqref{eq:lambda-basic}, we obtain
\begin{align*}
    1 - \xi_N^{-1} \cos k
    = \frac{1}{1+\lambda_-^2} \bigl( 1 - \lambda_- e^{ik} \bigr)\bigl( 1 - \lambda_- e^{-ik} \bigr),
\end{align*}
where we have used $2\xi_N \lambda_- = \lambda_-(\lambda_++\lambda_-) = 1 + \lambda_-^2$.
Since $|\lambda_-|<1$, we may expand $\mathrm{Log}(1-\lambda_- e^{\pm ik}) = - \sum_{m \ge 1} \frac{\lambda_-^m}{m} e^{\pm ikm}$ and integrate term by term. The constant $\mathrm{Log}(1+\lambda_-^2)^{-1}$ contributes only to the diagonal $x=y$, and the two series contribute $\lambda_-^{|x-y|}/|x-y|$ off the diagonal and nothing on it. This is \eqref{eq:freekernel}.
\end{proof}

\begin{lemma}[Exact image structure of the symmetrized reflections]\label{lem:images}
With $\alpha=p/q$ one has the identity
\begin{align}\label{eq:key-identity}
    \frac{q-p}{pq} = \frac{q}{p} - \frac{p}{q} = \alpha^{-1} - \alpha .
\end{align}
Moreover, for every $x \ge 1$ and every $y \in \mathbb{Z}$,
\begin{align}\label{eq:M0-column}
\begin{aligned}
   & \langle x | \tilde{\mathcal{M}}_0^* | y \rangle =
    \begin{cases}
        w_m, & y = -x-2m, \quad m \in \mathbb{Z}_{\ge 0}, \\
        0, & \text{otherwise},
    \end{cases}
    \end{aligned}
\end{align}
where $ w_0 := \alpha $, and $ w_m := (\alpha - \alpha^{-1})\,\alpha^{m} \quad (m \ge 1) $,
and, for every $x \le N$ and every $y \in \mathbb{Z}$,
\begin{align}\label{eq:MN1-column}
\begin{aligned}
  &  \langle x | \tilde{\mathcal{M}}_{N+1}^* | y \rangle =     \begin{cases}
        v_m, & y = 2(N+1)-x+2m, \quad m \in \mathbb{Z}_{\ge 0}, \\
        0, & \text{otherwise},
    \end{cases}
\end{aligned}
\end{align}
where $ v_0 := \alpha^{-1}$, and $ v_m := (\alpha^{-1} - \alpha)\,\alpha^{-m} \quad (m \ge 1)$. 
Their generating functions satisfy
\begin{align}\label{eq:WV}
    W(s) := \sum_{m \ge 0} w_m s^m = \frac{\alpha - s}{1 - \alpha s}, \qquad 
    V(s) := \sum_{m \ge 0} v_m s^m = \frac{1 - \alpha s}{\alpha - s},
\end{align}
for sufficiently small $ \sigma $
so that $ W(\sigma) V(\sigma) \equiv 1 $.
\end{lemma}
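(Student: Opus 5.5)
The plan is to compute the matrix entries of $\tilde{\mathcal{M}}_K^*$ directly from the definition \eqref{matrixM} of $\mathsf{M}$ and the conjugation $\tilde{\mathcal{M}}^*_K=\mathcal{D}_\infty^{-1}\mathcal{M}^*_K\mathcal{D}_\infty$; the lemma is then bookkeeping. First, \eqref{eq:key-identity} follows from $p+q=1$: we have $\frac{q}{p}-\frac{p}{q}=\frac{q^2-p^2}{pq}=\frac{(q-p)(q+p)}{pq}=\frac{q-p}{pq}$.

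For \eqref{eq:M0-column} we first read off the kernel of the adjoint. Since $\mathcal{M}_0^*f(y)=\sum_z m_{z,y}f(z)$, we have $\langle x|\mathcal{M}_0^*|y\rangle=m_{y,x}$. Because $\mathcal{D}_\infty$ is diagonal with entries $(q/p)^{x/2}$, conjugation gives
\begin{align*}
\langle x|\tilde{\mathcal{M}}_0^*|y\rangle=(q/p)^{(y-x)/2}\,m_{y,x}=\alpha^{(x-y)/2}\,m_{y,x}.
\end{align*}
For $x\ge1$ the only nonzero $m_{y,x}$ come from the fourth and fifth cases of \eqref{matrixM}.
\begin{itemize}
\item The anti-diagonal case $y=-x$ gives $m_{-x,x}=(q/p)^{x-1}$. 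Multiplying by the weight $\alpha^{x}$ yields $w_0=\alpha$.
\item The tail case $y=-x-2m$ with $m\ge1$ gives $m_{y,x}=q^{x-1}p^{-x-1}(p-q)$. Multiplying by the weight $\alpha^{x+m}$, the $x$-dependence cancels and we get $\alpha^m(p-q)/(pq)$. By \eqref{eq:key-identity} this equals $(\alpha-\alpha^{-1})\alpha^m$.
\end{itemize}
The $x$-independence of the symmetrized coefficients is the point of the conjugation, and checking this cancellation is the only place where care is needed.

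For \eqref{eq:MN1-column} we do the same computation at $K=N+1$. Set $x'=x-(N+1)\le -1$ and $y'=y-(N+1)$, so that $\langle x|\mathcal{M}_{N+1}^*|y\rangle=m_{y',x'}$, and the weight is again $(q/p)^{(y-x)/2}$. Now the first and second cases of \eqref{matrixM} apply. The entry $y'=-x'$, i.e.\ $y=2(N+1)-x$, gives $\alpha^{-1}$. The entries $y=2(N+1)-x+2m$ with $m\ge1$ give $(\alpha^{-1}-\alpha)\alpha^{-m}$, using $p^{(j+1)_-}=p^{-x'-1}$ for $j=x'\le -1$. Alternatively, this case is the mirror image $p\leftrightarrow q$, $\alpha\leftrightarrow\alpha^{-1}$ of the first, which is the symmetry already used in the involution lemma. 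One should still check that the factor $\mathcal{D}_\infty$, which is centered at $0$ rather than at $N+1$, only contributes the relative weight $(q/p)^{(y-x)/2}$, so that the translation introduces no extra constant.

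Finally we sum the geometric series. For $|s|<|\alpha|^{-1}$,
\begin{align*}
W(s)=\alpha+(\alpha-\alpha^{-1})\frac{\alpha s}{1-\alpha s}=\frac{\alpha-\alpha^2s+\alpha^2s-s}{1-\alpha s}=\frac{\alpha-s}{1-\alpha s}.
\end{align*}
The same computation with $\alpha\to\alpha^{-1}$ gives $V(s)=\frac{\alpha^{-1}-s}{1-\alpha^{-1}s}=\frac{1-\alpha s}{\alpha-s}$, valid for $|s|<\min(\alpha,\alpha^{-1})$. Hence $W(s)V(s)\equiv1$ on the common disc of convergence.
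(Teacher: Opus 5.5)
Your proposal is correct and follows the paper's proof: conjugate by the diagonal $\mathcal{D}_\infty$ to get the weight $\alpha^{(x-y)/2}$ on $m_{y,x}$ (the paper packages this as $\tilde m_{i,j}=\alpha^{-(i-j)/2}m_{i,j}$, a function of $i+j$ alone off the anti-diagonal). You then read off cases 4--5 of $\mathsf{M}$ for $K=0$ and cases 1--2 for $K=N+1$, and sum the two geometric series. Your entry-by-entry checks, including the $x$-independence and the absence of any extra constant from centering $\mathcal{D}_\infty$ at $0$, all hold.
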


\begin{proof}
The identity \eqref{eq:key-identity} is $\frac{q^2-p^2}{pq} = \frac{(q-p)(q+p)}{pq}$ together with $p+q=1$.

Since $\tilde{\mathcal{M}}_K = \mathcal{D}_\infty \mathcal{M}_K \mathcal{D}_\infty^{-1}$ with $(\mathcal{D}_\infty f)(x) = \alpha^{-x/2} f(x)$, the matrix of $\tilde{\mathcal{M}}_K$ is $\langle i | \tilde{\mathcal{M}}_K | j \rangle = \tilde{m}_{i-K,\,j-K}$, where
$   \tilde{m}_{i,j} := \alpha^{-(i-j)/2} m_{i,j} $,
and consequently $\langle x | \tilde{\mathcal{M}}_K^* | y \rangle = \tilde{m}_{y-K,\,x-K}$.
Reading off the definition \eqref{matrixM} of $\mathsf{M} = (m_{i,j})$:
\begin{itemize}
    \item $m_{i,-i} = \alpha^{i-1}$ for $i \ge 1$ gives $\tilde{m}_{i,-i} = \alpha^{-i}\alpha^{i-1} = \alpha^{-1}$, and symmetrically $\tilde{m}_{-i,i} = \alpha$;
    \item for $i > 0 \ge j$ with $i+j = 2k \ge 2$ we have $m_{i,j} = p^{-j-1}q^{j-1}(q-p)$, whence
    \begin{align*}
        \tilde{m}_{i,j} = \alpha^{-(i-j)/2} p^{-j-1}q^{j-1}(q-p) 
        = \frac{q-p}{pq}\,\alpha^{-k} = (\alpha^{-1}-\alpha)\,\alpha^{-k},
    \end{align*}
    a function of $i+j$ only;
    \item symmetrically, for $i < 0 \le j$ with $i+j = -2k \le -2$, $\tilde{m}_{i,j} = (\alpha-\alpha^{-1})\,\alpha^{k}$.
\end{itemize}
For \eqref{eq:M0-column} take $K=0$ and read the column $j = x \ge 1$ of $\tilde{m}_{y,x}$: the only admissible entries are $y = -x$, of weight $\tilde{m}_{-x,x} = \alpha$, and $y < 0 \le x$ with $y+x = -2m \le -2$, of weight $(\alpha-\alpha^{-1})\alpha^{m}$.
For \eqref{eq:MN1-column} take $K = N+1$ and read the column $j = x-(N+1) \le -1$ of $\tilde{m}_{y-(N+1),\,x-(N+1)}$: the admissible entries are $y-(N+1) = (N+1)-x$, of weight $\alpha^{-1}$, and $y-(N+1) > 0 \ge x-(N+1)$ with $\bigl( y-(N+1) \bigr) + \bigl( x-(N+1) \bigr) = 2m \ge 2$, of weight $(\alpha^{-1}-\alpha)\alpha^{-m}$.

Finally, by summing the geometric series and using \eqref{eq:key-identity},
\begin{align*}
    W(s) &= \alpha + (\alpha-\alpha^{-1}) \frac{\alpha s}{1-\alpha s}
     = \frac{\alpha-s}{1-\alpha s}, \\
    V(s) &= \alpha^{-1} + (\alpha^{-1}-\alpha) \frac{\alpha^{-1}s}{1-\alpha^{-1}s}
   = \frac{1-\alpha s}{\alpha- s},
\end{align*}
whence $W(\sigma)V(\sigma) \equiv 1$.
\end{proof}

\begin{rem}\label{rem:not-reflections}
Formulas \eqref{eq:M0-column} and \eqref{eq:MN1-column} hold for \emph{all} $x > 0$, resp.\ all $x < N+1$, not merely for bulk points; this is what makes the composition in Lemma \ref{lem:translations} purely combinatorial. Note also that $W(1) = V(1) = -1$: for $p \neq q$ the operators $\tilde{\mathcal{M}}_K$ are genuinely not point reflections, and their ``tails'' $(w_m)_{m\ge1}$, $(v_m)_{m\ge1}$ are not negligible --- each has size $O(1/N)$ under \eqref{eq:scaling}, but $O(N)$ of them lie within the range of the kernel \eqref{eq:freekernel}, so that their total contribution is $O(1)$. What happens for the even words is not that the tails become small, but that they cancel identically; see Lemma \ref{lem:translations} below. 
\end{rem}

\begin{lemma}[The even words act as exact translations on the bulk]\label{lem:translations}
For every $k \in \mathbb{Z} \setminus \{0\}$ and every $x \in \{1, \dots, N\}$,
\begin{align}\label{eq:Texact}
    \langle x | \tilde{\mathcal{T}}_k^* | y \rangle = \delta_{y,\, x + 2k(N+1)} \qquad (y \in \mathbb{Z}).
\end{align}
Consequently, for all $p,q \in (0,1)$ with $p+q=1$ and all $N \ge 2$,
\begin{align*}
    \log \zeta_{\tilde{T}_k, Y^{(N)}}(u) = \log \zeta_{\tilde{T}_k, \tilde{\mathbf{P}}^{(N)}}(z)
    = \frac{N}{2|k|(N+1)} \lambda_-^{\,2|k|(N+1)},
\end{align*}
and
\begin{align}\label{eq:prodTexact}
    \prod_{\tilde{T} \in \tilde{\mathcal{T}}} \zeta_{\tilde{T}, Y^{(N)}}(u)
    = \bigl( 1 - \lambda_-^{2(N+1)} \bigr)^{-\frac{N}{N+1}} .
\end{align}
\end{lemma}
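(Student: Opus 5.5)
The plan is to compute the rows of $\tilde{\mathcal{T}}_k^*$ directly from the column formulas of Lemma \ref{lem:images}. The tails of the two reflections should cancel through the identity $W(s)V(s)\equiv 1$ of \eqref{eq:WV}, read as an identity of formal power series. Take $k\ge 1$ first, and write $\tilde{\mathcal{T}}_k^*$ as an alternating product of $\tilde{\mathcal{M}}_0^*$ and $\tilde{\mathcal{M}}_{N+1}^*$. Fix $x\in\{1,\dots,N\}$ and track the row vector $\langle x|\tilde{\mathcal{T}}_k^*$ one factor at a time, always applying the factor adjacent to $\langle x|$ first.

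Suppose the current row is concentrated at a single point $x'>0$. Then \eqref{eq:M0-column} applies, since it holds for all $x'\ge 1$ (Remark \ref{rem:not-reflections}). It spreads $\langle x'|$ into $\sum_{m\ge0} w_m \langle -x'-2m|$, and all these points lie below $N+1$. Formula \eqref{eq:MN1-column} is valid for every point below $N+1$, so applying it gives
\begin{align*}
\sum_{m,m'\ge 0} w_m v_{m'}\,\langle 2(N+1)+x'+2(m+m')| .
\end{align*}
The coefficient of $\langle 2(N+1)+x'+2j|$ is the finite convolution $\sum_{m+m'=j} w_m v_{m'}$. This is the $j$-th Taylor coefficient of $W V\equiv 1$, hence equals $\delta_{j,0}$.

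So each pair of factors maps $\langle x'|$ to $\langle x'+2(N+1)|$ exactly. The new point is again positive, so the same step can be repeated, and induction on $k$ gives \eqref{eq:Texact}. For $k\le -1$ the same argument runs with the roles reversed. One starts with \eqref{eq:MN1-column} at a point $x'<N+1$, obtains points $>N+1>0$, and then applies \eqref{eq:M0-column}; the convolution is $VW\equiv1$ and the shift is $-2(N+1)$.

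The point I expect to need the most care is the bookkeeping of the order of factors in $\tilde{\mathcal{T}}_k^*$ versus $\tilde{\mathcal{T}}_{-k}^*$, i.e.\ which sign of the shift goes with which word. Equally important is checking that at every stage the intermediate points lie in the half-line where the relevant column formula is valid. This is why the row vector must be propagated outward from $\langle x|$, rather than expanding columns at the far end. If the orientation comes out opposite to $+2k(N+1)$, the statement simply holds with $k\mapsto -k$, and the symmetric product \eqref{eq:prodTexact} is unaffected.

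For the zeta factors, Lemma \ref{local-konno-sato} gives $c(\tilde{\mathcal{T}}_k)=0$, so $\zeta_{\tilde{\mathcal{T}}_k,Y^{(N)}}(u)=\zeta_{\tilde{\mathcal{T}}_k,\tilde{\mathbf{P}}^{(N)}}(z)$. By \eqref{eq:zeta-as-kernel} and \eqref{eq:Texact},
\begin{align*}
\log\zeta_{\tilde{\mathcal{T}}_k,\tilde{\mathbf{P}}^{(N)}}(z)=\sum_{x=1}^N\langle x+2k(N+1)|\mathbf{L}(z)|x\rangle .
\end{align*}
Each summand is off-diagonal, so Lemma \ref{lem:freekernel} gives $\lambda_-^{2|k|(N+1)}/(2|k|(N+1))$, and summing over $x$ yields the stated factor $N$. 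Finally, summing over $k\in\mathbb{Z}\setminus\{0\}$, where each $|k|$ occurs twice, gives
\begin{align*}
\frac{N}{N+1}\sum_{k\ge1}\frac{\lambda_-^{2k(N+1)}}{k}=-\frac{N}{N+1}\log\bigl(1-\lambda_-^{2(N+1)}\bigr).
\end{align*}
This series converges since $|\lambda_-|<1$, and exponentiating gives \eqref{eq:prodTexact}. The identities are first obtained for small $|u|$ and then extend to $0<|u|<1$ by analyticity of $\lambda_-$ on $\mathbb{D}^*$.
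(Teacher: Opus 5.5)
Your proposal is correct and is essentially the paper's argument. You propagate the row $\langle x|$ outward through the alternating factors via \eqref{eq:M0-column} and \eqref{eq:MN1-column}, cancel the tails by $W(\sigma)V(\sigma)\equiv 1$ (the paper applies all $2k$ factors at once and uses $(WV)^k=1$, while you induct pair by pair, which amounts to the same thing), and then insert \eqref{eq:Texact} into \eqref{eq:zeta-as-kernel}, Lemma \ref{lem:freekernel}, and $c(\tilde{\mathcal T}_k)=0$. Your worry about orientation is unnecessary: for $k\ge1$, $\tilde{\mathcal T}_k^*=(\tilde{\mathcal M}_0^*\tilde{\mathcal M}_{N+1}^*)^k$ puts $\tilde{\mathcal M}_0^*$ next to $\langle x|$ and gives exactly the shift $+2k(N+1)$; and your closing analytic-continuation remark to $0<|u|<1$ is a point the paper leaves implicit.
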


\begin{proof}
Let $k \ge 1$, so that $\tilde{\mathcal{T}}_k^* = (\tilde{\mathcal{M}}_0^* \tilde{\mathcal{M}}_{N+1}^*)^k$.
Start from $x \in \{1,\dots,N\}$ and apply the $2k$ factors from the left.
By \eqref{eq:M0-column}, an application of $\tilde{\mathcal{M}}_0^*$ to a point $> 0$ produces points $< 0$; by \eqref{eq:MN1-column}, an application of $\tilde{\mathcal{M}}_{N+1}^*$ to a point $< N+1$ produces points $> N+1$. Hence the hypotheses of Lemma \ref{lem:images} are met at every step, and after the $2k$ steps the endpoint is
\begin{align*}
    y = x + 2k(N+1) + 2m, \qquad m = m_1 + \cdots + m_{2k} \quad (m_i \in \mathbb{Z}_+),
\end{align*}
carrying the weight $\prod_{i \text{ odd}} w_{m_i} \prod_{i \text{ even}} v_{m_i}$.
For each fixed $m$ only finitely many $(m_1,\dots,m_{2k})$ occur, so that the matrix element is the finite sum
\begin{align*}
    \langle x | \tilde{\mathcal{T}}_k^* | x + 2k(N+1) + 2m \rangle = c^{(k)}_m,
    \quad
    \sum_{m \ge 0} c^{(k)}_m \sigma^m = \bigl( W(\sigma) V(\sigma) \bigr)^k = 1
\end{align*}
as an identity of formal power series, by \eqref{eq:WV}. Comparing coefficients gives $c^{(k)}_0 = 1$ and $c^{(k)}_m = 0$ for $m \ge 1$, which is \eqref{eq:Texact}. The case $k \le -1$ is identical with the roles of $\tilde{\mathcal{M}}_0^*$ and $\tilde{\mathcal{M}}_{N+1}^*$ interchanged, and yields $y = x - 2|k|(N+1)$.

Inserting \eqref{eq:Texact} and Lemma \ref{lem:freekernel} into \eqref{eq:zeta-as-kernel} gives
\begin{align*}
    \log \zeta_{\tilde{T}_k, \tilde{\mathbf{P}}^{(N)}}(z)
    = \sum_{x=1}^{N} \frac{\lambda_-^{\,2|k|(N+1)}}{2|k|(N+1)}
    = \frac{N}{2|k|(N+1)} \lambda_-^{\,2|k|(N+1)},
\end{align*}
and the passage from $\tilde{\mathbf{P}}^{(N)}$ to $Y^{(N)}$ is Lemma \ref{local-konno-sato} together with $c(\tilde{T}_k) = 0$. Summing over $k \in \mathbb{Z}\setminus\{0\}$,
\begin{align*}
    \log \prod_{\tilde{T} \in \tilde{\mathcal{T}}} \zeta_{\tilde{T}, Y^{(N)}}(u)
    = 2 \sum_{k=1}^\infty \frac{N}{2k(N+1)} \lambda_-^{\,2k(N+1)}
    = - \frac{N}{N+1} \log \bigl( 1 - \lambda_-^{2(N+1)} \bigr),
\end{align*}
which is \eqref{eq:prodTexact}.
\end{proof}

\begin{lemma}[The odd words]\label{lem:reflections}
For every $k \ge 0$ and every $x \in \{1,\dots,N\}$, the only non-vanishing matrix elements of the odd words are
\begin{align}\label{eq:Rcolumns}
\begin{aligned}
    \langle x | \tilde{\mathcal{R}}_{0,k}^* | y \rangle &= w_m
      &&\text{for } y = -\bigl( x + 2k(N+1) \bigr) - 2m, \quad m \ge 0, \\
    \langle x | \tilde{\mathcal{R}}_{N+1,k}^* | y \rangle &= v_m
      &&\text{for } y = 2(k+1)(N+1) - x + 2m, \quad m \ge 0 .
\end{aligned}
\end{align}
Let moreover $u\in\Omega_N$, the domain \eqref{eq:OmegaN}. Then the product
$\prod_{\tilde{\mathcal{R}}\in\tilde{\mathfrak{R}}}\zeta_{\tilde{\mathcal{R}},Y^{(N)}}(u)$
converges absolutely, the double series
\begin{align}\label{eq:R-doublesum}
    \log \prod_{\tilde{\mathcal{R}} \in \tilde{\mathfrak{R}}} \zeta_{\tilde{\mathcal{R}}, Y^{(N)}}(u)
    = \sum_{\substack{n \ge 1 \\ (N+1) \nmid n}} \ \sum_{m \ge 0} (w_m + v_m)\,
        \frac{\lambda_-^{\,2(n+m)}}{2(n+m)}
\end{align}
converges absolutely and locally uniformly on $\Omega_N$, and, in closed form,
\begin{align}\label{eq:R-exact}
    \prod_{\tilde{R} \in \tilde{\mathcal{R}}} \zeta_{\tilde{R}, Y^{(N)}}(u)
    = \frac{\bigl( 1 - \lambda_-^{2(N+1)} \bigr)^{\frac{1}{N+1}}}{1-\lambda_-^2}\, e^{\,\mathcal{E}_N},
\end{align}
where
\begin{align}\label{eq:def-EN}
    \mathcal{E}_N := \frac{(\alpha-1)^2}{2} \int_0^{\lambda_-^2}
        \frac{(1+\tau)^2}{(1-\alpha\tau)(\alpha-\tau)}
        \left( \frac{1}{1-\tau} - \frac{\tau^{N}}{1-\tau^{N+1}} \right) \mathrm{d}\tau ,
\end{align}
the integral being taken along the segment from $0$ to $\lambda_-^2$. In
particular both sides of \eqref{eq:R-exact} are holomorphic on $\Omega_N$,
and $\mathcal{E}_N = 0$ if $p=q$.
\end{lemma}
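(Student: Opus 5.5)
The plan is to reduce every odd word to a single application of $\tilde{\mathcal{M}}_0^*$ or $\tilde{\mathcal{M}}_{N+1}^*$ composed with an exact translation, and then to resum the resulting double series by a logarithmic derivative in $\tau=\lambda_-^2$.

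\emph{Step 1: the column formulas \eqref{eq:Rcolumns}.} Since $\tilde{\mathcal{R}}_{0,k}=\tilde{\mathcal{M}}_0\tilde{\mathcal{T}}_k$, we have $\tilde{\mathcal{R}}_{0,k}^*=\tilde{\mathcal{T}}_k^*\tilde{\mathcal{M}}_0^*$, and so
\begin{align*}
\langle x|\tilde{\mathcal{R}}_{0,k}^*|y\rangle=\sum_z\langle x|\tilde{\mathcal{T}}_k^*|z\rangle\langle z|\tilde{\mathcal{M}}_0^*|y\rangle .
\end{align*}
By Lemma \ref{lem:translations} (for $k\ge1$; for $k=0$ the first factor is trivial) the first factor is $\delta_{z,x+2k(N+1)}$. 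The point $z=x+2k(N+1)$ is positive, so Lemma \ref{lem:images} applies; Remark \ref{rem:not-reflections} matters here, since $z$ need not be a bulk point. This gives weight $w_m$ at $y=-z-2m$. The case of $\tilde{\mathcal{R}}_{N+1,k}=\tilde{\mathcal{M}}_{N+1}\tilde{\mathcal{T}}_{-k}$ is symmetric: $z=x-2k(N+1)<N+1$, and \eqref{eq:MN1-column} gives weight $v_m$ at $y=2(k+1)(N+1)-x+2m$.

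\emph{Step 2: the double series \eqref{eq:R-doublesum}.} Since $y\neq x$ for these words, $c(\tilde{\mathcal{R}})=0$, and Lemma \ref{local-konno-sato} gives $\zeta_{\tilde{\mathcal{R}},Y^{(N)}}(u)=\zeta_{\tilde{\mathcal{R}},\tilde{\mathbf P}^{(N)}}(z)$. Inserting Step 1 and Lemma \ref{lem:freekernel} into \eqref{eq:zeta-as-kernel}, the distance $|x-y|$ equals $2(n+m)$ in both cases.
\begin{itemize}
\item For $\tilde{\mathcal{R}}_{0,k}$ one has $n=x+k(N+1)$.
\item For $\tilde{\mathcal{R}}_{N+1,k}$ one has $n=(k+1)(N+1)-x$.
\end{itemize}
As $x$ runs over $\{1,\dots,N\}$ and $k$ over $\mathbb{Z}_{\ge0}$, each of these maps is a bijection onto $\{n\ge1:(N+1)\nmid n\}$. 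This yields \eqref{eq:R-doublesum} term by term.

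For absolute convergence, note that $|w_m|\le C\alpha^m$ and $|v_m|\le C\alpha^{-m}$. The double series is therefore dominated by
\begin{align*}
\sum_{n,m}\max(\alpha,\alpha^{-1})^m|\lambda_-|^{2(n+m)},
\end{align*}
which converges locally uniformly exactly when $|\lambda_-|^2\max(\alpha,\alpha^{-1})<1$, i.e.\ on $\Omega_N$. The same bound justifies regrouping the words, so the product over $\tilde{\mathfrak R}$ converges absolutely.

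\emph{Step 3: closed form \eqref{eq:R-exact}.} Write $\tau=\lambda_-^2$ and let $F(\tau)$ denote the right side of \eqref{eq:R-doublesum}. Applying $\tau\,\frac{d}{d\tau}$ removes the denominators $2(n+m)$ and factorizes the series:
\begin{align*}
\tau F'(\tau)=\tfrac12\bigl(W(\tau)+V(\tau)\bigr)\Bigl(\frac{\tau}{1-\tau}-\frac{\tau^{N+1}}{1-\tau^{N+1}}\Bigr).
\end{align*}
Here $W,V$ are the generating functions of \eqref{eq:WV}. Their poles $\alpha^{\pm1}$ lie outside the disc $|\tau|<\min(\alpha,\alpha^{-1})$ that $\Omega_N$ guarantees. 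The key algebraic step is
\begin{align*}
W+V=\frac{(\alpha-\tau)^2+(1-\alpha\tau)^2}{(1-\alpha\tau)(\alpha-\tau)}=2+\frac{(\alpha-1)^2(1+\tau)^2}{(1-\alpha\tau)(\alpha-\tau)} .
\end{align*}
The constant $2$ integrates, after dividing by $\tau$, to
\begin{align*}
-\log(1-\tau)+\tfrac{1}{N+1}\log\bigl(1-\tau^{N+1}\bigr),
\end{align*}
which is the prefactor in \eqref{eq:R-exact}. The remaining part, divided by $\tau$ and integrated from $0$ (where $F(0)=0$) along the segment to $\lambda_-^2$, is exactly $\mathcal{E}_N$. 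For $p=q$ we have $\alpha=1$, so $\mathcal{E}_N=0$.

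Holomorphy on $\Omega_N$ follows because the integrand is holomorphic on the disc $|\tau|<\min(\alpha,\alpha^{-1})$. The only singularities are at $\tau=\alpha^{\pm1}$ and at the $(N+1)$-th roots of unity (where $1/(1-\tau)-\tau^N/(1-\tau^{N+1})$ can blow up), all on or outside the unit circle. Moreover $u\mapsto\lambda_-(u)^2$ is holomorphic, as established in Section \ref{sec:8.1}.

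The main obstacle is Step 2's bookkeeping together with the domain issue. One must check that the two index bijections really omit exactly the multiples of $N+1$. One must also check that the rearrangement from words to the $(n,m)$ series is legitimate on all of $\Omega_N$, not just near $u=0$. I would first prove the identity for small real $u$, where everything is absolutely summable, and then extend to $\Omega_N$ by uniqueness of analytic continuation, using the local uniform domination from Step 2 and the connectedness of $\Omega_N$ noted after \eqref{eq:OmegaN}.
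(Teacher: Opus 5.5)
Your proposal is correct and follows the paper's proof essentially step for step: the factorizations $\tilde{\mathcal{R}}_{0,k}^*=\tilde{\mathcal{T}}_k^*\tilde{\mathcal{M}}_0^*$ and $\tilde{\mathcal{R}}_{N+1,k}^*=\tilde{\mathcal{T}}_{-k}^*\tilde{\mathcal{M}}_{N+1}^*$, the two bijections onto $\{n\ge1:(N+1)\nmid n\}$, the geometric domination on $\Omega_N$, and the splitting $W+V=2+(\alpha-1)^2(1+\tau)^2/\bigl((1-\alpha\tau)(\alpha-\tau)\bigr)$ are exactly the paper's ingredients. Your $\tau\,\mathrm{d}/\mathrm{d}\tau$ resummation is just the paper's substitution $\lambda_-^{2r}/(2r)=\frac12\int_0^{\lambda_-^2}\tau^{r-1}\,\mathrm{d}\tau$ (with Fubini) read in reverse. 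One harmless slip: for $p\neq q$ one of the poles $\alpha^{\pm1}$ lies strictly inside the unit circle, so not all singularities are on or outside it. What you need, and what you correctly stated a few lines earlier, is only that they lie outside the open disc $|\tau|<\min(\alpha,\alpha^{-1})$.
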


\begin{proof}
Since $\tilde{\mathcal{R}}_{0,k}^* 
= \tilde{\mathcal{T}}_k^* \tilde{\mathcal{M}}_0^*$ and $\tilde{\mathcal{R}}_{N+1,k}^* 
= \tilde{\mathcal{T}}_{-k}^* \tilde{\mathcal{M}}_{N+1}^*$, Lemma \ref{lem:translations} reduces the computation to a single application of \eqref{eq:M0-column}, resp.\ \eqref{eq:MN1-column}; the sign hypotheses are met because $x + 2k(N+1) > 0$ and $x - 2k(N+1) < N+1$. This is \eqref{eq:Rcolumns}.

By Lemma \ref{lem:freekernel} the corresponding spatial distances are $|y-x| = 2\bigl(k(N+1)+x+m\bigr)$ and $|y-x| = 2\bigl((k+1)(N+1)-x+m\bigr)$. As $k$ runs over $\mathbb{Z}_+$ and $x$ over $\{1,\dots,N\}$, each of $n = k(N+1)+x$ and $n = (k+1)(N+1)-x$ runs exactly once over $\{ n \ge 1 : (N+1) \nmid n \}$. Summing over $x$ and over all odd words, and using $c(\tilde{\mathcal{R}})=0$ in Lemma \ref{local-konno-sato}, gives \eqref{eq:R-doublesum}.

We next check that everything just written converges, which is where the
hypothesis $u\in\Omega_N$ enters. By Lemma \ref{lem:images} and
\eqref{eq:key-identity}, the weights of the odd words satisfy, 
for $ m \geq 1$, 
\begin{align*}
    |w_m|\,|\lambda_-|^{2m}=\bigl|\alpha-\alpha^{-1}\bigr|
\left(\alpha\,|\lambda_-|^{2}\right)^{m},
    \quad
    |v_m|\,|\lambda_-|^{2m}=\bigl|\alpha-\alpha^{-1}\bigr|
\left(\alpha^{-1}|\lambda_-|^{2}\right)^{m},
\end{align*}
and both geometric ratios are $<1$ exactly when
$|\lambda_-|^{2}<\min(\alpha,\alpha^{-1})$, that is, exactly when
$u\in\Omega_N$. Since in addition $|\lambda_-|<1$ and $2(n+m)\geq2$, the
double series in \eqref{eq:R-doublesum} is dominated by
\begin{align*}
    \bigl(\alpha+\alpha^{-1}\bigr)
    \sum_{n\geq1}|\lambda_-|^{2n}
    \sum_{m\geq0}\left[\left(\alpha|\lambda_-|^{2}\right)^{m}
                      +\left(\alpha^{-1}|\lambda_-|^{2}\right)^{m}\right]
    <\infty ,
\end{align*}
locally uniformly on $\Omega_N$; in particular the individual series
defining the $\log\zeta_{\tilde{\mathcal{R}},Y^{(N)}}(u)$ converge absolutely,
their sum over $\tilde{\mathcal{R}}\in\tilde{\mathfrak{R}}$ may be rearranged
at will, and the product
$\prod_{\tilde{\mathcal{R}}}\zeta_{\tilde{\mathcal{R}},Y^{(N)}}(u)$ converges
absolutely. This justifies \eqref{eq:R-doublesum} and, by Fubini's theorem,
the interchange of summation and integration carried out below. Finally,
$\min(\alpha,\alpha^{-1})\leq1$, so on $\Omega_N$ the segment
$[0,\lambda_-^{2}]$ lies in the disc $\{|\tau|<\min(\alpha,\alpha^{-1})\}$,
on which the integrand of \eqref{eq:def-EN} is holomorphic: its only
singularities in $\mathbb{C}$ are the simple poles $\tau=\alpha^{\pm1}$ and
the zeros of $1-\tau^{N+1}$, all of modulus at least
$\min(\alpha,\alpha^{-1})$. Holomorphy of both sides of \eqref{eq:R-exact}
on $\Omega_N$ follows, $\Omega_N$ being open and $\lambda_-$ holomorphic
there.

Inserting $\frac{\lambda_-^{2r}}{2r} = \frac{1}{2}\int_0^{\lambda_-^2} \tau^{r-1}\mathrm{d}\tau$ with $r = n+m$ and summing the two resulting series,
\begin{align*}
    \log\prod_{\tilde{\mathcal{R}} \in \tilde{\mathfrak{R}}} \zeta_{\tilde{\mathcal{R}}, Y^{(N)}}(u)
    = \frac{1}{2} \int_0^{\lambda_-^2}
      \left( \frac{\tau}{1-\tau} - \frac{\tau^{N+1}}{1-\tau^{N+1}} \right)
      \bigl( W(\tau) + V(\tau) \bigr) \frac{\mathrm{d}\tau}{\tau} .
\end{align*}
By \eqref{eq:WV},
\begin{align*}
    W(\tau) + V(\tau)
    = 2 + \frac{\bigl[ (\alpha-\tau)-(1-\alpha\tau) \bigr]^2}{(1-\alpha\tau)(\alpha-\tau)}
    = 2 + \frac{(\alpha-1)^2 (1+\tau)^2}{(1-\alpha\tau)(\alpha-\tau)} .
\end{align*}
The contribution of the constant $2$ is
\begin{align*}
    \int_0^{\lambda_-^2} \left( \frac{1}{1-\tau} - \frac{\tau^{N}}{1-\tau^{N+1}} \right) \mathrm{d}\tau
    = -\log(1-\lambda_-^2) + \frac{1}{N+1}\log\bigl( 1-\lambda_-^{2(N+1)} \bigr),
\end{align*}
and the remaining contribution is $\mathcal{E}_N$ as defined in \eqref{eq:def-EN}. Exponentiating gives \eqref{eq:R-exact}. Finally $(\alpha-1)^2 = 0$ when $p=q$.
\end{proof}

\begin{rem}
The tails of the odd words do \emph{not} cancel: the factor $e^{\mathcal{E}_N}$ in \eqref{eq:R-exact} is not negligible in the double scaling limit, and contributes the finite factor $\varkappa^2/s^2$ to \eqref{eq:massive_asymptotic} below. This is in sharp contrast with the even words, for which the cancellation is exact by Lemma \ref{lem:translations}. Both phenomena are governed by the same identity \eqref{eq:key-identity}.
\end{rem}

\subsection{Convergence to the Massive Dirac-Type Trace Formula}

\begin{theorem}[Convergence to the Continuum Massive Dirac-Type Trace Formula]\label{thm:double_scaling}
Let $M\in\mathbb{R}$ be fixed and let $N>|M|$, and consider the double
scaling limit $N\to\infty$ with $u=e^{-s/N}$ and
$2\sqrt{pq}=\sqrt{1-M^{2}/N^{2}}$. Let $s$ range in the region
$\mathcal{S}_M$ of \eqref{eq:SM}, and put $\varkappa=\sqrt{s^{2}+M^{2}}$,
$\mathrm{Re}(\varkappa)>0$, as in \eqref{eq:varkappa}. 
Then the global zeta
function of the Szegedy walk converges to the continuous zeta
function of the massive Dirac type: the spectral determinant yields
\begin{align}\label{eq:massive_spectral}
    \lim_{N \to \infty} \det(I_{2N+2} - u\mathbf{U}^{(N)})^{-1}
    = \frac{\varkappa}{s} e^{s-\varkappa} Z^{\mathrm{cont}}(s, M).
\end{align}

Furthermore, the geometric Euler product over the translation set $\tilde{\mathfrak{T}}$ reconstructs the continuous global zeta function via the term-by-term local correspondence
\begin{align}\label{eq:massive_translation}
    \lim_{N \to \infty} \zeta_{\tilde{\mathcal{T}}_k, Y^{(N)}}(u)\, \zeta_{\tilde{\mathcal{T}}_{-k}, Y^{(N)}}(u) = \zeta_{\gamma_0^k}^{\mathrm{cont}}(s, M)
    \qquad (k \in \mathbb{N}),
\end{align}
which leads to
\begin{align}\label{eq:massive_geometric_T}
    \lim_{N \to \infty} \prod_{\tilde{\mathcal{T}} \in \tilde{\mathfrak{T}}} \zeta_{\tilde{\mathcal{T}}, Y^{(N)}}(u)
    = Z^{\mathrm{cont}}(s, M) = \prod_{k=1}^\infty \zeta_{\gamma_0^k}^{\mathrm{cont}}(s, M).
\end{align}
The bulk vacuum contribution ($id$) converges to the continuous free field factor:
\begin{align}\label{eq:massive_identity}
    \lim_{N \to \infty} \zeta_{id, Y^{(N)}}(u) = e^{s-\varkappa}.
\end{align}
The odd-length reflections and the boundary anomaly are given exactly by
\begin{align}\label{eq:massive_reflection_exact}
\prod_{\tilde{\mathcal{R}} \in \tilde{\mathfrak{R}}} \zeta_{\tilde{\mathcal{R}}, Y^{(N)}}(u)
      &= \frac{ \bigl(1 - \lambda_-^{2(N+1)}\bigr)^{\frac{1}{N+1}} }{ 1 - \lambda_-^2 }\, e^{\mathcal{E}_N},
    \\ \label{eq:massive_boundary_exact}
    \zeta_{\partial Y^{(N)}}(u)
      &= \frac{(1-\lambda_-^2)^2}{1-u^2} \bigl( 1 - \lambda_-^{2(N+1)} \bigr)^{-\frac{2}{N+1}} e^{-\mathcal{E}_N},
\end{align}
with $\mathcal{E}_N$ as in \eqref{eq:def-EN}. In the thermodynamic limit their individual contributions diverge and decay, respectively:
\begin{align}\label{eq:massive_asymptotic}
    \lim_{N\to\infty} \mathcal{E}_N = \log \frac{\varkappa^2}{s^2},
    \quad
    \prod_{\tilde{\mathcal{R}} \in \tilde{\mathfrak{R}}} \zeta_{\tilde{\mathcal{R}}, Y^{(N)}}(u) \sim \frac{N\varkappa}{2s^2},
    \quad
    \zeta_{\partial Y^{(N)}}(u) \sim \frac{2s}{N}
    \quad (N \to \infty).
\end{align}
However, their singularities cancel each other in the product, yielding the finite boundary scattering phase
\begin{align}\label{eq:massive_reflection_limit}
    \lim_{N \to \infty} \left[ \zeta_{\partial Y^{(N)}}(u) \prod_{\tilde{\mathcal{R}} \in \tilde{\mathfrak{R}}} \zeta_{\tilde{\mathcal{R}}, Y^{(N)}}(u)\right]
    = \lim_{N \to \infty} \frac{2s}{N}\cdot\frac{N\varkappa}{2s^2} = \frac{\varkappa}{s}.
\end{align}
\end{theorem}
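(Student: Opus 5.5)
The plan is to work entirely from the exact finite-$N$ formulas already established and then expand in $1/N$ under \eqref{eq:scaling}. First I would obtain the asymptotics of $\lambda_-$. From \eqref{eq:z-and-xi}, $\xi_N=\cosh(s/N)/\sqrt{1-M^2/N^2}=1+(s^2+M^2)/(2N^2)+O(N^{-3})$. Hence $\gamma_N=\mathrm{arcosh}(\xi_N)=\varkappa/N+O(N^{-2})$ on the principal branch, and $\lambda_-^{2(N+1)}=e^{-2(N+1)\gamma_N}\to e^{-2\varkappa}$. This makes \eqref{eq:massive_translation} and \eqref{eq:massive_geometric_T} immediate from Lemma \ref{lem:translations}. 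The $k$-th pair contributes $\exp\bigl(\tfrac{N}{k(N+1)}\lambda_-^{2k(N+1)}\bigr)\to\exp(e^{-2k\varkappa}/k)$. For the full product, \eqref{eq:prodTexact} gives $(1-\lambda_-^{2(N+1)})^{-N/(N+1)}\to(1-e^{-2\varkappa})^{-1}$. The limit \eqref{eq:massive_reflection_exact} is simply Lemma \ref{lem:reflections}, once I check that $u=e^{-s/N}\in\Omega_N$ for large $N$. That condition reads $|\lambda_-|^2<\min(\alpha,\alpha^{-1})$, i.e. $e^{-2\mathrm{Re}\varkappa/N}<e^{-2|M|/N+O(N^{-2})}$, since $\log\alpha=2M/N+O(N^{-3})$. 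This is exactly where the hypothesis $s\in\mathcal{S}_M$, i.e. $\mathrm{Re}\varkappa>|M|$, is used.

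For the identity factor I would use Remark \ref{rem:identity-is-Mahler} together with Lemma \ref{lem:freekernel} and Lemma \ref{local-konno-sato}. These give $\log\zeta_{id,Y^{(N)}}(u)=N[\log(1+\lambda_-^2)-\log(1+u^2)]$, and expanding $\lambda_-=1-\varkappa/N+O(N^{-2})$ and $u=1-s/N+\dots$ yields $N\log\frac{1+\lambda_-^2}{1+u^2}\to s-\varkappa$, which is \eqref{eq:massive_identity}. For the boundary factor \eqref{eq:massive_boundary_exact}, I would not compute $\zeta_{\partial Y^{(N)}}$ directly. Instead I would solve the exact identity \eqref{eq:geometric_decomposition_proof} for it, using the closed form \eqref{explicitUN} of the global determinant. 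Rewriting \eqref{explicitUN} in terms of $\lambda_-$, each factor $1-4u\sqrt{pq}\cos\frac{m\pi}{N+1}+u^2$ equals $4u\sqrt{pq}(\xi_N-\cos\theta_m)$. The Chebyshev product identity, applied at $\xi_N=J(\lambda_-^{-1})$, gives
$$\prod_{m=1}^N 2(\xi_N-\cos\theta_m)=\frac{\lambda_-^{-(N+1)}-\lambda_-^{N+1}}{\lambda_-^{-1}-\lambda_-}.$$
Dividing by the identity, translation and reflection factors already found, the powers of $(1+\lambda_-^2)$, $u$ and $\sqrt{pq}$ should cancel against $e^{N\log(1+\lambda_-^2)}(1+u^2)^{-N}$. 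The remainder should be exactly \eqref{eq:massive_boundary_exact}, and this bookkeeping is routine but must be done carefully. Then \eqref{eq:massive_spectral} follows either from this closed form directly, via $U_N$ asymptotics, or as the product of the separate limits.

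The main obstacle is the limit $\mathcal{E}_N\to\log(\varkappa^2/s^2)$ in \eqref{eq:massive_asymptotic}. The prefactor $(\alpha-1)^2/2\sim 2M^2/N^2$ is small, but the integrand has poles at $\tau=\alpha^{\pm1}=1\pm 2M/N$ and the upper limit $\lambda_-^2=1-2\varkappa/N$ approaches them. I would rescale $\tau=1-2w/N$, so that $d\tau=-2dw/N$ and $(1-\alpha\tau)(\alpha-\tau)\approx(2/N)^2(w-M)(w+M)$ (up to sign). The bracket becomes $N/(2w)-e^{-2w}N/(2(1-e^{-2w}))$. The first term should produce $\int_{\varkappa}^{\infty}\frac{2M^2\,dw}{w(w^2-M^2)}\cdot$(normalization), which equals $\log\frac{\varkappa^2}{\varkappa^2-M^2}=\log\frac{\varkappa^2}{s^2}$. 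The $\tau^N/(1-\tau^{N+1})$ term would have to vanish in the limit; I am least sure of this and would verify it by checking that its rescaled contribution is $O(1/N)$ uniformly. Dominated convergence along the contour, which stays clear of $w=\pm M$ precisely because $\mathrm{Re}\varkappa>|M|$, justifies the limit, with the region near $\tau=0$ contributing $O(N^{-2})$. The remaining asymptotics then follow: $1-\lambda_-^2\sim 2\varkappa/N$ gives the reflection product $\sim N\varkappa/(2s^2)$, and $1-u^2\sim 2s/N$ gives $\zeta_{\partial Y^{(N)}}\sim(2\varkappa/N)^2(N/2s)(s^2/\varkappa^2)=2s/N$. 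Multiplying these yields \eqref{eq:massive_reflection_limit}.
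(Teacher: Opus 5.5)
Your architecture matches the paper's. You use the closed forms from Lemmas \ref{lem:freekernel}, \ref{lem:translations} and \ref{lem:reflections}, and you obtain the boundary factor by dividing the Chebyshev closed form of the determinant by the three bulk factors. You take every limit through $N\gamma_N\to\varkappa$, and you handle $\mathcal{E}_N$ by rescaling near $\tau=1$ and applying dominated convergence.

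\textbf{The missing step.} You never justify evaluating \eqref{eq:geometric_decomposition_proof} at $u=e^{-s/N}$. That factorization was proved in Section \ref{sec:6} only for $|u|$ small, whereas here $|u|\to1$. Moreover $z=2u/(1+u^2)=1/\cosh(s/N)$ satisfies $|z|>1$ for large $N$ whenever $|\mathrm{Im}\,s|>\mathrm{Re}\,s$, and this happens inside $\mathcal{S}_M$. Since the boundary return probabilities do not tend to $0$, the series defining $\zeta_{\partial X^{(N)}}(z)$ diverges there. So ``solving the identity for $\zeta_{\partial Y^{(N)}}$'' has no content until the identity has been continued.

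The repair is Step 2 of the paper's proof:
\begin{itemize}
\item the identity factor and the translation product are holomorphic and zero-free on $\mathbb{D}^{*}$;
\item the odd-word product is holomorphic and zero-free on $\Omega_N$ by Lemma \ref{lem:reflections};
\item $\det(I-u\mathbf{U}^{(N)})^{-1}$ is holomorphic on the unit disc;
\item $\Omega_N$ is connected and contains a punctured neighbourhood of $0$.
\end{itemize}
Hence the quotient continues $\zeta_{\partial Y^{(N)}}$ to $\Omega_N$, and \eqref{eq:geometric_decomposition_proof} holds there. Only then does your check $e^{-s/N}\in\Omega_N$ yield \eqref{eq:massive_boundary_exact} at the point you need.

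\textbf{The bracket in $\mathcal{E}_N$ is miscomputed.} With $\tau=1-2w/N$ one has $1/(1-\tau)=N/(2w)$, but $\tau^N/(1-\tau^{N+1})\to e^{-2w}/(1-e^{-2w})$, with no factor $N/2$. As you wrote it, the second term is of the same order as the first. It would then add a non-vanishing $O(1)$ integral, and the limit would not be $\log(\varkappa^2/s^2)$.

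The correct statement, which is the paper's, is short. On the segment $|\tau|\le|\lambda_-|^2$, so $|\tau^N/(1-\tau^{N+1})|\le|\lambda_-|^{2N}/(1-|\lambda_-|^{2(N+1)})$. This is uniformly bounded because $|\lambda_-|^{2(N+1)}\to e^{-2\mathrm{Re}\,\varkappa}<1$. Together with $(\alpha-1)^2=O(N^{-2})$ and $\int|d\tau|/|(1-\alpha\tau)(\alpha-\tau)|=O(N)$, this term contributes $O(1/N)$.

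\textbf{Comparison once fixed.} With these two corrections your argument goes through. Your linear substitution is in fact slightly more convenient than the paper's, because the main term becomes exactly
$\frac{N^2(\alpha-1)^2}{2\alpha}\int_{w_N}^{N/2}\frac{(1-w/N)^2\,dw}{w(w-w_+)(w-w_-)}$,
with $w_N=N(1-\lambda_-^2)/2\to\varkappa$, $w_+=\frac N2(1-\alpha^{-1})\to M$ and $w_-=-\frac N2(\alpha-1)\to-M$. The segment's real parts stay above $|M|+\epsilon$ for some $\epsilon>0$ and large $N$, so domination by $C|w|^{-3}$ is immediate. The limit along the horizontal ray from $\varkappa$ equals the paper's integral over $\{\varkappa r: r\geq1\}$ by Cauchy's theorem in $\{\mathrm{Re}\,w>|M|\}$.

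The paper instead deforms to $\tau=e^{-2\gamma_N r}$. This lands directly on $\int_1^\infty\frac{2M^2\,dr}{r(\varkappa^2r^2-M^2)}$, but requires lower bounds on $|1-e^{-2\gamma_N r}|$ and $|\alpha^{\pm1}-\tau|$.

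Finally, \eqref{eq:massive_reflection_limit} follows from the exact quotient \eqref{eq:bdry-quotient}, in which $e^{\mathcal{E}_N}$ has already cancelled. So that limit does not depend on the $\mathcal{E}_N$ analysis at all.
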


\begin{rem}\label{rem:domain}
The restriction $s\in\mathcal{S}_M$ enters only through the odd words. The
limits \eqref{eq:massive_spectral}, \eqref{eq:massive_translation},
\eqref{eq:massive_geometric_T} and \eqref{eq:massive_identity} remain valid
on the whole half plane $\{\mathrm{Re}(s)>0\}$: their proofs (Steps 1, 3 and
4 below) use only the closed forms
\eqref{eq:exact_discrete_factorization}, \eqref{eq:prodTexact} and
\eqref{eq:zeta-id-exact}, all of which are holomorphic on $\{0<|u|<1\}$,
together with \eqref{eq:gamma-limit}. By contrast, the factor
$e^{\mathcal{E}_N}$ of \eqref{eq:massive_reflection_exact} is defined by a series which converges only on the smaller domain
$\Omega_N$ of \eqref{eq:OmegaN} (Lemma \ref{lem:reflections}), and
$\{u=e^{-s/N}\}\subset\Omega_N$ for large $N$ precisely when
$\mathrm{Re}(\varkappa)>|M|$. Since the limits in
\eqref{eq:massive_asymptotic} and \eqref{eq:massive_reflection_limit} are
holomorphic on $\{\mathrm{Re}(s)>0\}$ as well, one expects these identities
to persist there by analytic continuation; we do not pursue this, as the
factors themselves have to be re-defined off $\mathcal{S}_M$.
\end{rem}

\begin{proof}
\textit{Step 1: the exact discrete factorization and its limit.}
By the expression \eqref{explicitUN} of $\det(I_{2N+2}-u\mathbf{U}^{(N)})$ we may represent it using the Chebyshev polynomial of the second kind, $U_N(\xi_N) = 2^N \prod_{m=1}^N (\xi_N - \cos\frac{m\pi}{N+1})$; in terms of the characteristic roots $\lambda_\pm$,
\begin{align*}
    U_N(\xi_N) = \frac{\lambda_+^{N+1}}{\lambda_+-\lambda_-} \left( 1 - \lambda_-^{2(N+1)} \right).
\end{align*}
Since $\det(I_{2N+2}-u\mathbf{U}^{(N)}) = (1-u^2)(2u\sqrt{pq})^N U_N(\xi_N)$, we obtain
\begin{align*}
    \det(I_{2N+2} - u\mathbf{U}^{(N)})^{-1}
    = C_N(u) \left( 1 - \lambda_-^{2(N+1)} \right)^{-1}
\end{align*}
with 
\begin{align}\label{eq:exact_discrete_factorization}
      C_N(u) := \frac{(2u\sqrt{pq})^{-N} (\lambda_+-\lambda_-) \lambda_+^{-(N+1)}}{1-u^2}.
\end{align}

We now evaluate the double scaling limit for $s \in \mathbb{C}$ with $\mathrm{Re}(s)>0$. Noting that
\begin{align*}
    \xi_N = \frac{\cosh(s/N)}{\sqrt{1-M^2/N^2}},
\end{align*}
and recalling that $\gamma_N = \mathrm{arcosh}(\xi_N)$ is taken with $\mathrm{Re}(\gamma_N)>0$, so that $\lambda_\pm = e^{\pm\gamma_N}$ and $|\lambda_-|<1$, we expand
\begin{align*}
    \lim_{N \to \infty} N^2 (\cosh \gamma_N - 1)
    = \lim_{N \to \infty} N^2 \left( \frac{\cosh(s/N) - \sqrt{1-M^2/N^2}}{\sqrt{1-M^2/N^2}} \right)
    = \frac{\varkappa^2}{2}.
\end{align*}
Using $\cosh\gamma_N - 1 = 2\sinh^2(\gamma_N/2)$ and $\lim_{w\to0}\frac{\sinh w}{w}=1$, we deduce $\lim_{N\to\infty}(N\gamma_N)^2 = \varkappa^2$; since both $\mathrm{Re}(N\gamma_N)>0$ and $\mathrm{Re}(\varkappa)>0$, the branches match and
\begin{align}\label{eq:gamma-limit}
    \lim_{N\to\infty} N\gamma_N = \varkappa,
    \qquad\text{hence}\qquad
    \lim_{N\to\infty} \lambda_\pm^{\,N} = \lim_{N\to\infty} \lambda_\pm^{\,N+1} = e^{\pm\varkappa}.
\end{align}
These convergences are uniform on compact subsets of $\{\mathrm{Re}(s)>0\}$, all functions involved being holomorphic there. In particular
\begin{align}\label{eq:Z-limit}
    \lim_{N\to\infty} \left( 1 - \lambda_-^{2(N+1)} \right)^{-1} = \left( 1 - e^{-2\varkappa} \right)^{-1} = Z^{\mathrm{cont}}(s,M),
\end{align}
while, from $(2u\sqrt{pq})^{-N} = e^{s}(1-M^2/N^2)^{-N/2} \to e^{s}$, $N(\lambda_+-\lambda_-) = 2N\sinh\gamma_N \to 2\varkappa$ and $N(1-u^2) \to 2s$,
\begin{align}\label{eq:prefactor_limit_final}
    \lim_{N\to\infty} C_N(u)
    = \lim_{N\to\infty} \frac{N(\lambda_+-\lambda_-)}{N(1-u^2)} \, (2u\sqrt{pq})^{-N} \lambda_+^{-(N+1)}
    = \frac{2\varkappa}{2s} \cdot e^{s} \cdot e^{-\varkappa}
    = \frac{\varkappa}{s} e^{s-\varkappa}.
\end{align}
Combining \eqref{eq:exact_discrete_factorization}, \eqref{eq:Z-limit} and \eqref{eq:prefactor_limit_final} establishes \eqref{eq:massive_spectral}.

\textit{Step 2: validity of the factorization at $u=e^{-s/N}$.}
The product expansion \eqref{eq:geometric_decomposition_proof} was
established in Section \ref{sec:6} for $|u|$ sufficiently small, and we
first extend it to the domain $\Omega_N$ of \eqref{eq:OmegaN}. On $\Omega_N$
every factor on the right-hand side of
\eqref{eq:geometric_decomposition_proof} is holomorphic: this is Lemma
\ref{lem:reflections} for the odd words, while for the even words
$\sum_{k\geq1}\frac{N}{k(N+1)}|\lambda_-|^{2k(N+1)}$ converges locally
uniformly on all of $\mathbb{D}^{*}$ and controls \eqref{eq:prodTexact}, and
for the identity factor Lemmas \ref{local-konno-sato} and
\ref{lem:freekernel} give
$\zeta_{id,Y^{(N)}}(u)=\bigl((1+\lambda_-^{2})/(1+u^{2})\bigr)^{N}$, again
holomorphic on $\mathbb{D}^{*}$. Both sides of
\eqref{eq:geometric_decomposition_proof} are therefore holomorphic on the
connected open set $\Omega_N$ and agree near the origin, so they agree on
all of $\Omega_N$.

It remains to check that $u=e^{-s/N}\in\Omega_N$ for large $N$. Since
$|\lambda_-|^{2}=e^{-2\mathrm{Re}(\gamma_N)}$ and
$\min(\alpha,\alpha^{-1})=e^{-|\log\alpha|}$, the defining inequality of
\eqref{eq:OmegaN} reads
\begin{align*}
    2N\,\mathrm{Re}(\gamma_N)>N\,|\log\alpha| .
\end{align*}
By \eqref{eq:gamma-limit} the left-hand side converges to
$2\,\mathrm{Re}(\varkappa)$, and
$N\log\alpha=N\log\frac{1+M/N}{1-M/N}\to2M$, so the right-hand side
converges to $2|M|$; both convergences are uniform on compact subsets of
$\mathcal{S}_M$, on which $2\,\mathrm{Re}(\varkappa)>2|M|$ strictly. Hence,
for every compact $K\subset\mathcal{S}_M$ there is $N_K$ such that
$e^{-s/N}\in\Omega_N$ for all $s\in K$ and all $N\geq N_K$. All the
identities below are therefore valid at $u=e^{-s/N}$, for $s\in\mathcal{S}_M$
and $N$ large.

\textit{Step 3: the translations.}
By Lemma \ref{lem:translations} the elements of $\tilde{\mathfrak{T}}$ act on the bulk as exact translations by $2k(N+1)$, so that, with no approximation,
\begin{align*}
    \log \zeta_{\tilde{\mathcal{T}}_k, Y^{(N)}}(u) = \log \zeta_{\tilde{\mathcal{T}}_{-k}, Y^{(N)}}(u)
    = \frac{N}{2k(N+1)} \lambda_-^{\,2k(N+1)} \qquad (k \in \mathbb{N}).
\end{align*}
By \eqref{eq:gamma-limit}, $\lambda_-^{\,2k(N+1)} = e^{-2k(N+1)\gamma_N} \to e^{-2k\varkappa}$, while $N/(N+1) \to 1$; hence
\begin{align*}
    \lim_{N\to\infty} \log \bigl( \zeta_{\tilde{\mathcal{T}}_k, Y^{(N)}}(u) \zeta_{\tilde{\mathcal{T}}_{-k}, Y^{(N)}}(u) \bigr)
    = \frac{e^{-2k\varkappa}}{k},
\end{align*}
which is \eqref{eq:massive_translation} by \eqref{eq:continuous_local_zeta}. Taking the product over $k \ge 1$, or equivalently letting $N \to \infty$ in the exact formula \eqref{eq:prodTexact}, gives
\begin{align*}
    \lim_{N\to\infty} \prod_{\tilde{\mathcal{T}} \in \tilde{\mathfrak{T}}} \zeta_{\tilde{\mathcal{T}}, Y^{(N)}}(u)
    = \exp \left( \sum_{k=1}^\infty \frac{e^{-2k\varkappa}}{k} \right)
    = \bigl( 1 - e^{-2\varkappa} \bigr)^{-1} = Z^{\mathrm{cont}}(s,M),
\end{align*}
which is \eqref{eq:massive_geometric_T}.

\textit{Step 4: the bulk vacuum.}
Here $c(id) = N$, so Lemma \ref{local-konno-sato} gives $\zeta_{id, Y^{(N)}}(u) = (1+u^2)^{-N} \zeta_{id, \tilde{\mathbf{P}}^{(N)}}(z)$, while the diagonal case of Lemma \ref{lem:freekernel} gives $\log \zeta_{id, \tilde{\mathbf{P}}^{(N)}}(z) = N \log(1+\lambda_-^2)$. Since $1+\lambda_-^2 = \lambda_-(\lambda_-+\lambda_+) = 2\xi_N \lambda_- = \lambda_- \frac{1+u^2}{2u\sqrt{pq}}$,
\begin{align}\label{eq:zeta-id-exact}
    \zeta_{id, Y^{(N)}}(u) = \left( \frac{1+\lambda_-^2}{1+u^2} \right)^{N}
    = \bigl( 2u\sqrt{pq}\, \lambda_+ \bigr)^{-N}
    = (2u\sqrt{pq})^{-N} e^{-N\gamma_N} .
\end{align}
By $(2u\sqrt{pq})^{-N} \to e^{s}$ and \eqref{eq:gamma-limit}, this converges to $e^{s-\varkappa}$, which is \eqref{eq:massive_identity}.

\textit{Step 5: the odd words.}
Formula \eqref{eq:massive_reflection_exact} is Lemma \ref{lem:reflections}.
We compute $\lim_{N\to\infty}\mathcal{E}_N$. If $M=0$ then $\alpha=1$ and
$\mathcal{E}_N\equiv0=\log(\varkappa^{2}/s^{2})$, so we assume $M\neq0$ and
fix a compact set $K\subset\mathcal{S}_M$; all estimates below are uniform
for $s\in K$ and $N$ large.

The integrand of \eqref{eq:def-EN} is holomorphic on the disc
$\{|\tau|<\min(\alpha,\alpha^{-1})\}$, whose only singularities on
$\mathbb{C}$ are the simple poles $\tau=\alpha^{\pm1}$ and the zeros of
$1-\tau^{N+1}$ on $|\tau|=1$. By Step 2 the segment $[0,\lambda_-^{2}]$ lies
in that disc, and so does the curve
$\{\tau=e^{-2\gamma_N r}:r\geq1\}$, since
$\left|e^{-2\gamma_N r}\right|=|\lambda_-|^{2r}\leq|\lambda_-|^{2}$ for
$r\geq1$. As the disc is simply connected we may integrate along the latter:
substituting $\tau=e^{-2\gamma_N r}$, $\mathrm{d}\tau=-2\gamma_N
e^{-2\gamma_N r}\,\mathrm{d}r$, and reversing the orientation,
\begin{align*}
    \begin{aligned}
&\mathcal{E}_N=\int_1^{\infty}h_N(r)\,\mathrm{d}r, \,\,\text{where} \\
  &  h_N(r):=(\alpha-1)^{2}\,\gamma_N\,
      \frac{(1+\tau)^{2}\,e^{-2\gamma_N r}}{(1-\alpha\tau)(\alpha-\tau)}
      \left(\frac{1}{1-\tau}-\frac{\tau^{N}}{1-\tau^{N+1}}\right)
      \bigg|_{\tau=e^{-2\gamma_N r}} .
    \end{aligned}
\end{align*}
Write $\alpha=e^{\theta_N}$, so that $N\theta_N\to2M$, and recall
$N\gamma_N\to\varkappa$ from \eqref{eq:gamma-limit}. For each fixed $r\geq1$
we have $\tau\to1$, and
\begin{align*}
  &  (\alpha-1)^{2}\sim\theta_N^{2},\quad
    (1+\tau)^{2}\to4,\quad
    1-\tau\sim2\gamma_N r,\\
   & \alpha-\tau\sim\theta_N+2\gamma_N r,\quad
    1-\alpha\tau\sim2\gamma_N r-\theta_N,
\end{align*}
while $\tau^{N}/(1-\tau^{N+1})=O(1)$ so that the second bracket contributes
$O(\gamma_N)=O(1/N)$ relative to the first. Hence
\begin{align*}
    h_N(r)\longrightarrow
    \frac{2M^{2}}{r\left(\varkappa^{2}r^{2}-M^{2}\right)}
    \qquad(N\to\infty),\quad r\geq1 .
\end{align*}
For the domination, note that $\mathrm{Re}(\varkappa)>|M|>0$ on $K$ implies
that $\cos\arg(\gamma_N)=\mathrm{Re}(\gamma_N)/|\gamma_N|$ is bounded below
by a positive constant, whence $|1-e^{-2\gamma_N r}|\geq c\min(1,|\gamma_N|r)$
and $|\alpha^{\pm1}-\tau|\geq c\,\min\{1,|\gamma_N|r+|\theta_N|\}$ for all
$r\geq1$ and all large $N$; consequently $|h_N(r)|\leq C r^{-3}$ on
$[1,\infty)$. Dominated convergence and
\begin{align*}
    \frac{\mathrm{d}}{\mathrm{d}r}
    \log\frac{\varkappa^{2}r^{2}-M^{2}}{\varkappa^{2}r^{2}}
    =\frac{2M^{2}}{r\left(\varkappa^{2}r^{2}-M^{2}\right)},
    \qquad
    \left|\frac{M^{2}}{\varkappa^{2}r^{2}}\right|<1\ \ (r\geq1),
\end{align*}
the latter because $\mathrm{Re}(\varkappa)>|M|$ forces $|\varkappa|>|M|$, now
give
\begin{align*}
\begin{aligned}
    \lim_{N\to\infty}\mathcal{E}_N
    &=\int_1^{\infty}\frac{2M^{2}\,\mathrm{d}r}{r\left(\varkappa^{2}r^{2}-M^{2}\right)}
    =\left[\log\frac{\varkappa^{2}r^{2}-M^{2}}{\varkappa^{2}r^{2}}\right]_{r=1}^{r=\infty} \\
    &=-\log\frac{\varkappa^{2}-M^{2}}{\varkappa^{2}}
    =\log\frac{\varkappa^{2}}{s^{2}},
    \end{aligned}
\end{align*}
the principal branch being used throughout since
$1-M^{2}/(\varkappa r)^{2}$ stays in the disc of radius $1$ centred at $1$.
Since $N(1-\lambda_-^{2})=N(1-e^{-2\gamma_N})\to2\varkappa$ and
$(1-\lambda_-^{2(N+1)})^{1/(N+1)}\to1$,
\eqref{eq:massive_reflection_exact} gives
\begin{align*}
\prod_{\tilde{\mathcal{R}} \in \tilde{\mathfrak{R}}} \zeta_{\tilde{\mathcal{R}}, Y^{(N)}}(u)
    \sim \frac{N}{2\varkappa} \cdot \frac{\varkappa^2}{s^2} = \frac{N\varkappa}{2s^2},
\end{align*}
the second assertion of \eqref{eq:massive_asymptotic}.

\textit{Step 6: the boundary anomaly and conclusion.}
The boundary factor $\zeta_{\partial Y^{(N)}}(u)$ admits no independent closed form, and is determined by the exact factorization \eqref{eq:geometric_decomposition_proof}. Dividing \eqref{eq:exact_discrete_factorization} by the identity factor \eqref{eq:zeta-id-exact} and using $\lambda_+\lambda_-=1$,
\begin{align*}
    \frac{C_N(u)}{\zeta_{id, Y^{(N)}}(u)}
    = \frac{(\lambda_+-\lambda_-)\lambda_+^{-1}}{1-u^2}
    = \frac{1-\lambda_-^2}{1-u^2},
\end{align*}
so that, dividing further by the translation factor \eqref{eq:prodTexact},
\begin{align}\label{eq:bdry-quotient}
\begin{aligned}
    \zeta_{\partial Y^{(N)}}(u) \prod_{\tilde{\mathcal{R}} \in \tilde{\mathfrak{R}}} \zeta_{\tilde{\mathcal{R}}, Y^{(N)}}(u)
    &= \frac{C_N(u)\bigl(1-\lambda_-^{2(N+1)}\bigr)^{-1}}
           {\zeta_{id,Y^{(N)}}(u)\bigl(1-\lambda_-^{2(N+1)}\bigr)^{-\frac{N}{N+1}}} \\
    &= \frac{1-\lambda_-^2}{1-u^2}\bigl( 1 - \lambda_-^{2(N+1)} \bigr)^{-\frac{1}{N+1}}.
\end{aligned}
\end{align}
Dividing \eqref{eq:bdry-quotient} by \eqref{eq:massive_reflection_exact} yields the exact closed form \eqref{eq:massive_boundary_exact} of the boundary anomaly, and hence, by Step 5 and $N(1-u^2)\to 2s$,
\begin{align*}
    \zeta_{\partial Y^{(N)}}(u)
    \sim \frac{(2\varkappa/N)^2}{2s/N} \cdot \frac{s^2}{\varkappa^2}
    = \frac{2s}{N},
\end{align*}
the third assertion of \eqref{eq:massive_asymptotic}. Finally, letting $N \to \infty$ in \eqref{eq:bdry-quotient} and using $N(1-\lambda_-^2)\to2\varkappa$, $N(1-u^2)\to2s$ and $(1-\lambda_-^{2(N+1)})^{1/(N+1)}\to1$, we obtain \eqref{eq:massive_reflection_limit}. Multiplying \eqref{eq:massive_reflection_limit}, \eqref{eq:massive_identity} and \eqref{eq:massive_geometric_T} recovers \eqref{eq:massive_spectral}, in agreement with Step 1.
\end{proof}

\subsection{Identification of the Limit as a Spectral Determinant}
\label{sec:8.6}

The three limits \eqref{eq:massive_geometric_T}, \eqref{eq:massive_identity}
and \eqref{eq:massive_reflection_limit} were obtained separately, one for
each class of words. We now observe that their product is a classical
object, and that this identifies each of the three factors.

Let $\mathsf{D}$ denote the massive Dirac operator on the interval $[0,1]$,
\begin{align}\label{eq:dirac-op}
    \mathsf{D}:=-i\sigma_1\frac{\mathrm{d}}{\mathrm{d}x}+M\sigma_3,
    \qquad
    \mathsf{D}^{2}=-\frac{\mathrm{d}^{2}}{\mathrm{d}x^{2}}+M^{2},
\end{align}
so that $\mathsf{D}^{2}+s^{2}=-\frac{\mathrm{d}^{2}}{\mathrm{d}x^{2}}+\varkappa^{2}$
with $\varkappa=\sqrt{s^{2}+M^{2}}$ as in \eqref{eq:varkappa}
and we write $\mathsf{D}_{0}$ for $\mathsf{D}$ at $M=0$, so that
$\mathsf{D}_{0}^{2}+s^{2}=-\mathrm{d}^{2}/\mathrm{d}x^{2}+s^{2}$
We denote by $\det_\zeta$ the zeta-regularized determinant, defined as usual
by $\det_\zeta A=\exp(-\zeta_A'(0))$ with
$\zeta_A(z)=\sum_j\lambda_j^{-z}$. 
The determinant we need is the classical
Gelfand--Yaglom evaluation \cite{GelfandYaglom1960}; for the
zeta-regularized form used here see
\cite{LevitSmilansky1977, Forman1987, BFK1992} and, for the explicit
one-dimensional formulas, \cite[Ch.~2]{Kirsten2002},
\cite{KirstenMcKane2003} or the survey \cite{Dunne2008}. In the case at
hand it is immediate:

\begin{lemma}\label{lem:gy}
Let $m\in\mathbb{C}$ with $m^{2}\neq-(n\pi)^{2}$ for every $n\geq1$, the
right-hand side below being read as its limit $2$ when $m=0$. Then, 
\begin{align}\label{eq:gelfand-yaglom}
    \det\nolimits_\zeta\Bigl(-\frac{\mathrm{d}^{2}}{\mathrm{d}x^{2}}+m^{2}\Bigr)_{\mathrm{Dir},[0,1]}
    =\frac{2\sinh m}{m}.
\end{align}
\end{lemma}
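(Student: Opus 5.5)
The plan is to compute the spectrum explicitly and evaluate $\zeta_A'(0)$ by a Hadamard-product argument, with the massless case as the reference. The Dirichlet eigenvalues of $-\mathrm{d}^2/\mathrm{d}x^2+m^2$ on $[0,1]$ are $\lambda_n=(n\pi)^2+m^2$, $n\ge1$. The hypothesis $m^2\neq-(n\pi)^2$ guarantees that none of them vanishes, so $\zeta_A(z)=\sum_{n\ge1}\lambda_n^{-z}$ is well defined for $\mathrm{Re}\,z>1/2$. If some $\lambda_n$ is not real positive, we fix a branch of $\lambda_n^{-z}$ with a cut avoiding the finitely many exceptional eigenvalues.

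First we treat $m=0$. There $\zeta_A(z)=\pi^{-2z}\zeta_R(2z)$. Differentiating at $z=0$ and using $\zeta_R(0)=-1/2$ and $\zeta_R'(0)=-\tfrac12\log(2\pi)$ gives
\[
\zeta_A'(0)=-2\log\pi\,\zeta_R(0)+2\zeta_R'(0)=\log\pi-\log(2\pi)=-\log 2 .
\]
Hence $\det_\zeta=2$, which matches the stated limit value.

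For general $m$ we write
\[
\zeta_A(z)-\zeta_{A_0}(z)=\sum_{n\ge1}\Bigl[(n^2\pi^2+m^2)^{-z}-(n^2\pi^2)^{-z}\Bigr].
\]
Each summand is $O(n^{-2\mathrm{Re}z-2})$, so this difference is holomorphic near $z=0$. It can be differentiated term by term there, and it vanishes at $z=0$. Its derivative at $0$ is therefore $-\sum_n\log\bigl(1+m^2/(n\pi)^2\bigr)$. By Euler's product $\sinh m/m=\prod_n\bigl(1+m^2/(n\pi)^2\bigr)$, this equals $-\log(\sinh m/m)$, up to $2\pi i$ ambiguities that disappear after exponentiation. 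Combining with the first step,
\[
\zeta_A'(0)=-\log 2-\log\frac{\sinh m}{m},
\]
and exponentiating $-\zeta_A'(0)$ gives $\det_\zeta=2\sinh m/m$.

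The main obstacle is justifying the term-by-term differentiation at $z=0$. The plan is to expand $(n^2\pi^2)^{-z}\bigl[(1+m^2/(n\pi)^2)^{-z}-1\bigr]$. This is bounded by $C|z|\,n^{-2\mathrm{Re}z-2}$ uniformly for $z$ near $0$ and $n$ large, which gives locally uniform convergence of a series of holomorphic functions, so Weierstrass's theorem applies. The finitely many small $n$ with non-real $\lambda_n$ are handled by the fixed branch choice. Those terms only affect $\log$ by multiples of $2\pi i$, which are harmless after exponentiation.
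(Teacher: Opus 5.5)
Your proof is correct and follows essentially the same route as the paper. You get the massless value $2$ from $\pi^{-2z}\zeta_R(2z)$, then differentiate the difference $\zeta_m-\zeta_0$ term by term at $z=0$ (justified by $\sum_n|m^2/(n\pi)^2|<\infty$) and sum it via $\sinh m/m=\prod_n(1+m^2/(n\pi)^2)$; your extra remark on fixing branches for the finitely many non-positive eigenvalues, whose $2\pi i$ ambiguities disappear after exponentiation, is a small point of care that the paper leaves implicit.
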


\begin{proof}
The Dirichlet spectrum of $-\mathrm{d}^{2}/\mathrm{d}x^{2}$ on $[0,1]$ is
$\{(n\pi)^{2}\}_{n\geq1}$, so its zeta function is
$\zeta(z)=\pi^{-2z}\zeta_{R}(2z)$ with $\zeta_{R}$ the Riemann zeta
function. Using $\zeta_{R}(0)=-\frac12$ and
$\zeta_{R}'(0)=-\frac12\log(2\pi)$,
\begin{align*}
    \zeta'(0)=-2\log\pi\cdot\zeta_{R}(0)+2\zeta_{R}'(0)
      =\log\pi-\log(2\pi)=-\log 2 ,
\end{align*}
whence $\det_\zeta(-\mathrm{d}^{2}/\mathrm{d}x^{2})_{\mathrm{Dir},[0,1]}=2$.
Write $\zeta_{m}(z)=\sum_{n\geq1}\bigl((n\pi)^{2}+m^{2}\bigr)^{-z}$, so that
$\zeta_{0}=\zeta$, and put $a_{n}:=m^{2}/(n\pi)^{2}$. For $\mathrm{Re}(z)$
large,
\begin{align*}
    \zeta_{m}(z)-\zeta_{0}(z)
    =\sum_{n\geq1}(n\pi)^{-2z}\bigl[(1+a_{n})^{-z}-1\bigr],
\end{align*}
each summand vanishing at $z=0$ with derivative $-\log(1+a_{n})$ there.
Since $\sum_{n}|a_{n}|<\infty$, the series converges locally uniformly in a
neighbourhood of $z=0$ and defines a holomorphic function there, so the
difference may be differentiated term by term:
\begin{align*}
    \zeta_{m}'(0)-\zeta_{0}'(0)
    =-\sum_{n\geq1}\log\left(1+\frac{m^{2}}{n^{2}\pi^{2}}\right)
    =-\log\frac{\sinh m}{m},
\end{align*}
by the Weierstrass product for $\sinh$. Combining with
$\det_\zeta(-\mathrm{d}^{2}/\mathrm{d}x^{2})_{\mathrm{Dir},[0,1]}=2$ gives
\eqref{eq:gelfand-yaglom}.
\end{proof}

\begin{rem}
The right-hand side of \eqref{eq:gelfand-yaglom} is $2y(1)$, where $y$
solves $y''=m^{2}y$ with $y(0)=0$, $y'(0)=1$; this is the Gelfand--Yaglom
form of the determinant, and the constant $2$ is
$\det_\zeta(-\mathrm{d}^{2}/\mathrm{d}x^{2})_{\mathrm{Dir},[0,1]}$. It is
this Wronskian normalization that will be matched with the boundary factor
of the discrete decomposition in Remark \ref{rem:dictionary}.
\end{rem}

\begin{theorem}[The limit is a spectral determinant]\label{thm:identification}
In the double scaling limit of Theorem \ref{thm:double_scaling}, but for
every $s$ with $\mathrm{Re}(s)>0$,
\begin{align}\label{eq:identification}
    \lim_{N\to\infty}\det\bigl(I_{2N+2}-u\mathbf{U}^{(N)}\bigr)^{-1}
    =\frac{e^{s}}{s}\,
     \Bigl[\det\nolimits_\zeta\bigl(\mathsf{D}^{2}+s^{2}\bigr)_{\mathrm{Dir},[0,1]}\Bigr]^{-1}
    =\frac{e^{s}\,\varkappa}{2s\sinh\varkappa}.
\end{align}
Writing $\mathbf{U}^{(N)}_{M}$ for the walk with the scaling
\eqref{eq:scaling} at mass $M$, the two non-universal prefactors cancel in
the massive-to-massless ratio, which is therefore an identity between
spectral determinants alone:
\begin{align}\label{eq:identification-ratio}
    \lim_{N\to\infty}
    \frac{\det\bigl(I_{2N+2}-u\mathbf{U}^{(N)}_{M}\bigr)^{-1}}
         {\det\bigl(I_{2N+2}-u\mathbf{U}^{(N)}_{0}\bigr)^{-1}}
=\frac{\det_\zeta\bigl(\mathsf{D}_{0}^{2}+s^{2}\bigr)_{\mathrm{Dir},[0,1]}}      {\det_\zeta\bigl(\mathsf{D}^{2}+s^{2}\bigr)_{\mathrm{Dir},[0,1]}}
    =\frac{\varkappa\sinh s}{s\sinh\varkappa}.
\end{align}
\end{theorem}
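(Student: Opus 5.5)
The proof will start from Step 1 of Theorem \ref{thm:double_scaling}. That step gives \eqref{eq:massive_spectral} on the whole half plane $\{\mathrm{Re}(s)>0\}$, not only on $\mathcal{S}_M$: it used only the closed form \eqref{eq:exact_discrete_factorization} and \eqref{eq:gamma-limit}, as recorded in Remark \ref{rem:domain}. So for every $s$ with $\mathrm{Re}(s)>0$,
\begin{align*}
\lim_{N\to\infty}\det\bigl(I_{2N+2}-u\mathbf{U}^{(N)}\bigr)^{-1}
=\frac{\varkappa}{s}\,e^{s-\varkappa}\bigl(1-e^{-2\varkappa}\bigr)^{-1}.
\end{align*}
Since $e^{-\varkappa}(1-e^{-2\varkappa})^{-1}=(e^{\varkappa}-e^{-\varkappa})^{-1}=(2\sinh\varkappa)^{-1}$, the right-hand side equals $e^{s}\varkappa/(2s\sinh\varkappa)$. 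This is the last expression in \eqref{eq:identification}. The Dirac-operator layer of the statement is then pure bookkeeping.

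The middle expression is handled next. By \eqref{eq:dirac-op}, $\mathsf{D}^{2}+s^{2}=-\mathrm{d}^{2}/\mathrm{d}x^{2}+\varkappa^{2}$, and I will apply Lemma \ref{lem:gy} with $m=\varkappa$. This gives $\det_\zeta(\mathsf{D}^{2}+s^{2})_{\mathrm{Dir},[0,1]}=2\sinh\varkappa/\varkappa$. Hence $\frac{e^{s}}{s}[\det_\zeta]^{-1}=e^{s}\varkappa/(2s\sinh\varkappa)$, which matches.

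The hypothesis of Lemma \ref{lem:gy}, $\varkappa^{2}\neq-(n\pi)^{2}$, must be checked; this is the only point needing care. Here $\varkappa^{2}=s^{2}+M^{2}$. If $s^2+M^2=-(n\pi)^2$ with $M$ real, then $s^{2}$ is a negative real number, so $s$ is purely imaginary, contradicting $\mathrm{Re}(s)>0$. Equivalently, $\mathrm{Re}(\varkappa)>0$ excludes $\varkappa\in i\pi\mathbb{Z}\setminus\{0\}$, so $\sinh\varkappa\neq0$. This also confirms that the limit is finite and that the principal branch of $\varkappa$ is consistent, since $\sinh\varkappa/\varkappa$ is even in $\varkappa$ and the branch choice is immaterial.

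For \eqref{eq:identification-ratio}, I apply \eqref{eq:identification} at mass $M$ and at mass $0$. At mass $0$ we have $\varkappa=s$, and the limit there is nonzero because $\sinh s\neq0$ for $\mathrm{Re}(s)>0$. The limit of the ratio is therefore the ratio of the limits. The universal prefactor $e^{s}/s$ cancels, leaving $\det_\zeta(\mathsf{D}_0^2+s^2)/\det_\zeta(\mathsf{D}^2+s^2)=\frac{2\sinh s/s}{2\sinh\varkappa/\varkappa}=\frac{\varkappa\sinh s}{s\sinh\varkappa}$.
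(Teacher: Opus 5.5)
Your proof is correct and follows the paper's argument: invoke \eqref{eq:massive_spectral} on all of $\{\mathrm{Re}(s)>0\}$ via Remark \ref{rem:domain}, rewrite $e^{-\varkappa}(1-e^{-2\varkappa})^{-1}=(2\sinh\varkappa)^{-1}$, apply Lemma \ref{lem:gy} with $m=\varkappa$, and take the quotient with the case $M=0$. You also check that $\varkappa^{2}\neq-(n\pi)^{2}$ and that the massless limit is nonzero, so the limit of the ratio is the ratio of the limits; the paper leaves both implicit, and your arguments for them are correct.
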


\begin{proof}
By \eqref{eq:massive_spectral}, which by Remark \ref{rem:domain} is valid on
all of $\{\mathrm{Re}(s)>0\}$, the limit equals
$\frac{\varkappa}{s}e^{s-\varkappa}Z^{\mathrm{cont}}(s,M)$, and 
\begin{align*}
    e^{-\varkappa}Z^{\mathrm{cont}}(s,M)
    =\frac{e^{-\varkappa}}{1-e^{-2\varkappa}}
    =\frac{1}{e^{\varkappa}-e^{-\varkappa}}
    =\frac{1}{2\sinh\varkappa},
\end{align*}
which gives the second equality in \eqref{eq:identification}; the first is
\eqref{eq:gelfand-yaglom} with $m=\varkappa$, together with
$\mathcal{D}^{2}+s^{2}=-\mathrm{d}^{2}/\mathrm{d}x^{2}+\varkappa^{2}$.
Taking the quotient with the same expression at $M=0$, where
$\varkappa=s$, cancels $e^{s}/s$ and gives
\eqref{eq:identification-ratio}.
\end{proof}

\begin{rem}[What each factor of the discrete decomposition becomes]
\label{rem:dictionary}
Expanding the right-hand side of \eqref{eq:identification} as
\begin{align*}
    \Bigl[\det\nolimits_\zeta\bigl(\mathcal{D}^{2}+s^{2}\bigr)_{\mathrm{Dir}}\Bigr]^{-1}
    =\frac{\varkappa}{2\sinh\varkappa}
    =\underbrace{\varkappa\vphantom{\frac11}}_{\text{Wronskian normalization}}
     \cdot\underbrace{e^{-\varkappa}}_{\text{vacuum}}
     \cdot\underbrace{\bigl(1-e^{-2\varkappa}\bigr)^{-1}}_{\text{periodic orbits}},
\end{align*}
and comparing with \eqref{eq:geometric_decomposition_proof}, the three
classes of words of $\tilde{\mathcal{G}}_{0,N+1}$ are seen to converge to
the three canonical constituents of the Gelfand--Yaglom determinant:
\begin{itemize}
\item the even words $\tilde{\mathfrak{T}}$ converge to
  $\bigl(1-e^{-2\varkappa}\bigr)^{-1}$, the periodic-orbit (Euler) factor
  of the primitive round trip $\gamma_{0}$;
\item the identity converges to $e^{s-\varkappa}$, the vacuum factor
  $e^{-l(\gamma_{0})\varkappa/2}$ times the lattice counterterm $e^{s}$;
\item the odd words $\tilde{\mathfrak{R}}$, together with the boundary
  factor $\zeta_{\partial Y^{(N)}}$, converge to $\varkappa/s$, the
  Wronskian (Gelfand--Yaglom) normalization measured against its value at
  $M=0$.
\end{itemize}
In particular the boundary scattering phase $\varkappa/s$ of
\eqref{eq:massive_reflection_limit}, which \eqref{eq:bdry-quotient} produces as a quotient, is exactly the
Gelfand--Yaglom normalization of the limiting operator, measured against its
massless value. Equivalently, since $s=\sqrt{\varkappa^{2}-M^{2}}$,
\begin{align}\label{eq:jacobian}
    \frac{\varkappa}{s}=\frac{\mathrm{d}s}{\mathrm{d}\varkappa}\bigg|_{\varkappa^{2}=s^{2}+M^{2}},
\end{align}
the Jacobian of the massive dispersion relation, i.e.\ the density of
states of the massive problem relative to the massless one. This is the
precise sense in which the bulk--boundary separation of the discrete trace
formula survives the continuum limit: the bulk supplies the length spectrum
and the vacuum, while the boundary supplies the dispersion relation.
\end{rem}

\begin{rem}[The factor $1/s$]\label{rem:zero-mode}
The prefactor $e^{s}/s$ in \eqref{eq:identification} is not part of the
continuum determinant. The factor $e^{s}=\lim(2u\sqrt{pq})^{-N}$ is the
usual per-site lattice counterterm. The factor $1/s$ is the contribution of
the eigenvalue $\mu_{0}=1$ of $\mathbf{U}^{(N)}$, which is present for every
$N$ and every $p,q$: it contributes $(1-u)^{-1}$ to
$\det(I-u\mathbf{U}^{(N)})^{-1}$, and $N(1-u)\to s$. It is the zero mode of
the limiting operator, and it is the reason why
\eqref{eq:identification-ratio} rather than \eqref{eq:identification} is the
normalization-free form of the statement.
\end{rem}
\section{Concluding Remarks}\label{sec:9}

We have shown that the zeta function of a walk on a finite path graph with
two reflecting ends factorizes exactly into local contributions indexed by
the infinite dihedral group generated by the boundary reflections, and that
this factorization is stable under the continuum limits one may take.

Two constructions carry the argument. The generalized reflection principle
of Section \ref{sec:5} replaces the point reflection, which is available
only for $p=q$, by an involution of $\mathbb{R}^{\mathbb{Z}}$ adapted to the
drift; summing the free kernel over the group it generates gives the
discrete trace formula. The local Konno--Sato correspondence of Section
\ref{sec:6} then transports that formula to the Szegedy walk term by term,
so that the classical/quantum arithmetic relation is a statement about each
bulk factor separately, while the algebraic prefactor of the quantum
determinant is absorbed into a single boundary zeta function.

The three limits we computed each isolate a different part of the group. In
the thermodynamic limit the Mahler measure representation of the normalized
zeta function is precisely the local factor of the identity element, so that
the infinite-graph limit records only the free walk and forgets both the
non-trivial group elements and the boundary. In the scaling limits, the
translations and the reflections conspire to produce the Poisson summation
formula in the diffusive regime $n\asymp N^{2}$ and the Dirac comb identity
in the ballistic regime $n\asymp N$; the difference of time scale is exactly
the difference between the heat kernel and the wave kernel on $S^{1}$, and
the quantum identity holds already at finite $N$. In the double scaling
limit the asymmetry becomes a mass, and the decomposition is inherited: the
even words give the periodic-orbit Euler product of the one-dimensional
orbifold, the identity gives the free-field factor, and the odd words
together with the boundary give a finite scattering phase, with the
divergence of the former cancelled exactly by the decay of the latter.
Their product, moreover, is not merely finite but classical: it is the
zeta-regularized determinant of the massive Dirac operator squared on the
interval, so that the discrete decomposition is matched constituent by
constituent with the Gelfand--Yaglom factorization of that determinant.

What we find suggestive is that none of these continuum features has to be
imported. The length spectrum of the limiting orbifold is the word length in
$\mathcal{G}_{0,N+1}$; the heat/wave dichotomy is a choice of time scale in a
single discrete identity; the mass is the drift $p-q$; and the Euler product
of the limiting zeta function is the image of a product which is already
exact on the lattice. In this sense the discrete trace formula behaves as a
pre-geometric object, of which the continuum trace formulas are degenerations.

Several questions remain. 
The scattering phase $\varkappa/s$ has been
identified in Remark \ref{rem:dictionary} as the Gelfand--Yaglom
normalization of the limiting operator, but the boundary zeta function
$\zeta_{\partial Y^{(N)}}(u)$ itself is still characterized only through the
factorization, and an intrinsic closed form for it at finite $N$ would be
desirable. 
It would also be natural to ask which of the present structure
survives on trees, on finite graphs with several boundary components, or for
coins other than the Szegedy one, where the relevant group is no longer
dihedral.



\begin{thebibliography}{99}


\bibitem{BianePitmanYor2001}
Biane, P., Pitman, J. and Yor, M. (2001). 
Probability laws related to the Jacobi theta and Riemann zeta functions, and Brownian excursions. 
{\em Bulletin of the American Mathematical Society}, 38(4), 435-465.

\bibitem{BFK1992}
Burghelea, D., Friedlander, L. and Kappeler, T. (1992).
Mayer--Vietoris type formula for determinants of elliptic differential
operators. {\em J. Funct. Anal.}, 107(1), 34--65.

\bibitem{Chavel1984}
Chavel, I. (1984). {\em Eigenvalues in Riemannian Geometry}. Academic Press.

\bibitem{Deninger1992}
Deninger, C. (1992). Local L-factors of motives and regularized determinants. {\em Inventiones Mathematicae}, 107(1), 135-150.

\bibitem{Duistermaat1975}
Duistermaat, J. J. and Guillemin, V. W. (1975). The spectrum of positive elliptic operators and periodic bicharacteristics. {\em Inventiones Mathematicae}, 29(1), 39-79.

\bibitem{Dunne2008}
Dunne, G. V. (2008).
Functional determinants in quantum field theory.
{\em J. Phys. A: Math. Theor.}, 41(30), 304006.

\bibitem{Feller1948}
Feller, W. (1948). 
On the Kolmogorov-Smirnov limit theorems for empirical distributions. 
{\em The Annals of Mathematical Statistics}, 19(2), 177-189.

\bibitem{Forman1987}
Forman, R. (1987).
Functional determinants and geometry.
{\em Invent. Math.}, 88(3), 447--493.

\bibitem{GelfandYaglom1960}
Gelfand, I. M. and Yaglom, A. M. (1960).
Integration in functional spaces and its applications in quantum physics.
{\em J. Math. Phys.}, 1(1), 48--69.

\bibitem{IKS2012QIP}
Ide, Y., Konno, N. and Segawa, E. (2012).
Time averaged distribution of a discrete-time quantum walk on the path. {\em Quantum Inf. Process}, 11, 1207–1218 .

\bibitem{Ihara1966}
Ihara, Y. (1966). On discrete groups; subset of $SL(2, \mathbb{R})$. 
{\em J. Math. Soc. Japan}, 18(3), 219-235.

\bibitem{Kirsten2002}
Kirsten, K. (2002).
{\em Spectral Functions in Mathematics and Physics}.
Chapman \& Hall/CRC, Boca Raton.

\bibitem{KirstenMcKane2003}
Kirsten, K. and McKane, A. J. (2003).
Functional determinants by contour integration methods.
{\em Ann. Physics}, 308(2), 502--527.

\bibitem{Kolmogorov1933}
Kolmogorov, A. (1933). 
Sulla determinazione empirica di una legge di distribuzione. 
{\em Giornale dell'Istituto Italiano degli Attuari}, 4, 83-91.


\bibitem{KKS2021QIP}
Komatsu, T., Konno, N., and Sato, I. (2021). Grover/Zeta Correspondence based on the Konno–Sato theorem. {\em Quantum Inf. Process} 20, 268. 

\bibitem{KKS2023JSP}
Komatsu, T., Konno, N. and Sato, I.  (2023). 
Walk/Zeta Correspondence. {\em J. Stat. Phys}, 190, 36.


\bibitem{KS2012QIP}
Konno, N. and Sato, I. (2012). On the relation between quantum walks and zeta functions. {\em Quantum Inf. Process}, 
11, 341-349.

\bibitem{KonnoTamura2021}
Konno, N. and Tamura, S.(2021).
Walk/zeta correspondence for quantum and correlated random walks. {\em Yokohama Math. J.}, 67, 125–152. 

\bibitem{Kurokawa2005}
Kurokawa, N. (2005). 
Zeta functions over $\mathbb{F}_1$. {\em Proceedings of the Japan Academy, Series A, Mathematical Sciences}, 81(10), 180-184.

\bibitem{LevitSmilansky1977}
Levit, S. and Smilansky, U. (1977).
A theorem on infinite products of eigenvalues of Sturm--Liouville type
operators. {\em Proc. Amer. Math. Soc.}, 65(2), 299--302.


\bibitem{Luck1994}
L\"uck, W. (1994). Approximating $L^2$-invariants by their finite-dimensional analogues. {\em Geometric and Functional Analysis GAFA}, 4(4), 455-481.

\bibitem{Selberg1956}
Selberg, A. (1956). Harmonic analysis and discontinuous groups in weakly symmetric Riemannian spaces with applications to Dirichlet series. 
{\em Journal of the Indian Mathematical Society}, 20, 47-87.

\bibitem{Smirnov1939}
Smirnov, N. V. (1939). 
Sur le \text{ecarts} de la courbe de distribution empirique. 
{\em Matematicheskii Sbornik}, 48(1), 3-26.

\bibitem{SmithDiaconis1988}
Smith, L. and Diaconis, P. (1988). 
Honest Bernoulli excursions. 
{\em Journal of Applied Probability}, 25(3), 464-477.

\bibitem{Sze}
Szegedy, M. (2004). 
Quantum speed-up of Markov chain based algorithms. In Proceedings of the 45th Annual IEEE Symposium on Foundations of Computer Science, 32-41.

\end{thebibliography}
\end{document}